\theoremstyle{plain}
\newtheorem{theorem}{Theorem}
\newtheorem*{theorem*}{Theorem}
\newtheorem{lemma}[theorem]{Lemma}
\newtheorem{prop}[theorem]{Proposition}
\newtheorem{corol}[theorem]{Corollary}
\theoremstyle{definition}
\newtheorem{defin}[theorem]{Definition}
\newtheorem{example}[theorem]{Example}
\theoremstyle{remark}
\newtheorem{remark}[theorem]{Remark}
\newcommand\hide[1]{}
\newcommand\cat\mathcal
\newcommand\set[1]{\left\{#1\right\}}
\newcommand{\downmono}{\mathbin{\rotatebox[origin=c]{-90}{$\rightarrowtail$}}}
\newcommand{\downepi}{\mathbin{\rotatebox[origin=c]{-90}{$\twoheadrightarrow$}}}
\newcommand\exlex{_\mathrm{ex/lex}}\newcommand\exreg{_\mathrm{ex/reg}}
\begin{document}
\begin{frontmatter}
\title{Resolvent embeddings}

\author[W. P. Stekelenburg]{Wouter Pieter Stekelenburg}
\address{Faculty of Mathematics, Informatics and Mechanics,
University of Warsaw,
Banacha 2,
02-097 Warszawa,
Poland}
\ead{w.p.stekelenburg@gmail.com}
\fntext[W. P. Stekelenburg]{Corresponding author; tel.: +31624543216}

\begin{abstract}
Many ex/reg completions $J:\cat C\to\cat C\exreg$ that arise in categorical realizability and tripos theory admit left Kan extensions of arbitrary finitely continuous functors to arbitrary exact categories. This paper identifies the property which is responsible for these extensions: the functors are \emph{resolvent}. Resolvency is characteristic of toposes that are ex/reg completions of regular categories with (weak) dependent products and generic monomorphisms. It also helps to characterize the toposes that the tripos-to-topos construction produces.
\end{abstract}

\begin{keyword}
exact completion\sep topos\sep tripos\sep Kan extension\sep generic monomorphism\MSC{18A22}
\end{keyword}


\end{frontmatter}

\newcommand\RT{\mathsf{RT}}
\newcommand\Set{\mathsf{Set}}
\newcommand\pow{\mathbf P}
\newcommand\Eff{\mathsf{Eff}}
\newcommand\Asm{\mathsf{Asm}}

The effective topos $\Eff$ is the ex/reg completion of the category of assemblies $\Asm$. This means that for each exact category $\cat E$ every regular functor $\Asm \to \cat E$ factors in an up to isomorphism unique way through a regular embedding $\Asm\to \Eff$. Although this is not always the case with ex/reg completions, functors $\Asm\to\cat E$ which only preserve finite limits also factor through the embedding $\Asm \to \Eff$. The reason is that the embedding is \emph{resolvent}, a property this paper defines and explains.

\section{Introduction}
Each regular category has two exact completions: the \emph{ex/lex completion} $I:\cat C\to \cat C\exlex$ and the \emph{ex/reg completion} $J:\cat C\to\cat C\exreg$. The ex/lex completion is a free finitely continuous functor into an exact category, while the ex/reg completion is a free \emph{regular} functor into an exact category. The restriction in freedom from the first to the second induces a unique regular functor $K:\cat C\exlex\to\cat C\exreg$ which satisfies $KI\simeq J$.

The embedding $J:\Asm \to \Asm\exreg\simeq \Eff$ is special because it is \emph{resolvent} (definition \ref{resolve}). Theorem \ref{ThA} explains what this means:

\begin{theorem*}[\ref{ThA}]
For every regular category $\cat C$, the following three conditions are equivalent:
\begin{enumerate}[1.]
\item the functor $J:\cat C \to\cat C\exreg$ is resolvent;
\item the canonical functor $K:\cat C\exlex\to\cat C\exreg$ has a right adjoint right inverse;
\item for each exact $\cat E$, each finitely continuous functor $F:\cat C \to \cat E$ has a left Kan extension along $J:\cat C \to \cat C\exreg$.
\end{enumerate}
\end{theorem*}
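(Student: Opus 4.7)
The plan is to prove (2)$\Leftrightarrow$(3) by formal arguments using only the universal properties of $I$ and $J$ together with the whiskering adjunction induced by $K\dashv R$, and then to bridge with (1) by unwinding the definition of resolvency, which I expect to be the main obstacle since it is the only step that depends on specific structure rather than formal categorical manipulation.

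For (2)$\Rightarrow$(3), let $R$ be a right adjoint right inverse to $K$ with unit $\eta:\mathrm{id}\to RK$, and let $F:\cat C\to\cat E$ be finitely continuous with $\cat E$ exact. Using the freeness of $I$, I would extend $F$ to a finitely continuous $\widetilde F:\cat C\exlex\to\cat E$ with $\widetilde FI\simeq F$, and propose $\widetilde FR:\cat C\exreg\to\cat E$, equipped with the structure morphism $F\simeq\widetilde FI\xrightarrow{\widetilde F\eta_I}\widetilde FRKI\simeq\widetilde FRJ$, as the left Kan extension of $F$ along $J$. The universality bijection is a two-step whiskering: natural transformations $\widetilde FR\to G$ correspond to natural transformations $\widetilde F\to GK$ via the whiskering adjunction $(-)\circ R\dashv(-)\circ K$ on functor categories, which in turn correspond to natural transformations $F\to GKI=GJ$ via the 2-categorical form of the ex/lex universal property, the latter applicable because the finitely continuous $\widetilde F$ sends the relevant quotients in $\cat C\exlex$ to quotients in $\cat E$, making $I(\cat C)$ dense for morphisms out of $\widetilde F$.

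For (3)$\Rightarrow$(2) I apply (3) to $F=I:\cat C\to\cat C\exlex$ to produce $R:=\mathrm{Lan}_JI$ with structure map $\iota:I\to RJ$. The unit $\eta:\mathrm{id}\to RK$ is obtained by extending $\iota$ through the universal property of $I$, after noting $RJ=RKI$, and the counit $\varepsilon:KR\to\mathrm{id}$ from the fact that $\mathrm{id}_{\cat C\exreg}$ is trivially the left Kan extension of $J=KI$ along $J$ while $KR$ with structure map $K\iota$ is such an extension as well, provided $K$ preserves $R$. This preservation is the principal technical point in this direction; I would expect it to follow because $K$ is regular and the Kan extension is built from quotients of pseudo-equivalence relations that $K$ respects. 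The triangle identities then reduce to naturality checks.

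Finally, for (1)$\Leftrightarrow$(2), I would unwind the definition of resolvency, which I anticipate expresses that every object of $\cat C\exreg$ admits a functorial presentation by objects of $J(\cat C)$ suitable for reinterpretation in $\cat C\exlex$. From such presentations one builds $R:\cat C\exreg\to\cat C\exlex$ pointwise, and $KR\simeq\mathrm{id}$ expresses that the ex/reg-reinterpretation recovers the original object; conversely, from a right adjoint right inverse $R$ one reads off canonical presentations $Y\simeq KRY$. The delicate step here is matching the possibly non-strict functoriality of resolutions with the strict functoriality of an adjunction, and this is where the specific features of resolvency, as opposed to the formal manipulations above, enter substantively.
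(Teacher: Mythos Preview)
Your argument for (2)$\Rightarrow$(3) is essentially correct and is in fact cleaner than the paper's route. One clarification: the second bijection $\mathrm{Nat}(\widetilde F,GK)\cong\mathrm{Nat}(F,GKI)$ is not the ex/lex universal property as stated in the paper (that is formulated only for natural \emph{isomorphisms}); what you actually need is that $\widetilde F=\mathrm{Lan}_I F$, which holds because $I$ itself is resolvent (every object of $\cat C\exlex$ is covered by some $IX_0$, and $\widetilde F$, being regular, preserves those coequalizers). Your phrase ``$I(\cat C)$ dense for morphisms out of $\widetilde F$'' is this fact.

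There is, however, a genuine gap in your (3)$\Rightarrow$(2). You correctly flag that the crux is showing $K$ preserves $R=\mathrm{Lan}_J I$, but your justification (``$K$ is regular and the Kan extension is built from quotients of pseudo\-equivalence relations'') presupposes an explicit pointwise construction of $R$ that hypothesis~(3) alone does not supply. Without~(1) you have no description of $R$ to which regularity of $K$ could be applied; and the abstract Kan-extension universal property only controls transformations \emph{out of} $R$, so it does not produce the counit $KR\to\mathrm{id}$. The paper does not attempt (3)$\Rightarrow$(2) directly for exactly this reason. Instead it proves (3)$\Rightarrow$(1) by a trick you are missing: let $(J\downepi\cat C\exreg)$ be the category whose objects are regular epimorphisms $e:JX\to Y$ and whose morphisms $e\to e'$ are arrows $\mathrm{cod}(e)\to\mathrm{cod}(e')$ through which $e$ factors; this category is exact, and the functor $G:\cat C\to(J\downepi\cat C\exreg)$ sending $X$ to $\mathrm{id}_{JX}$ is finitely continuous. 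Applying~(3), the value of $\mathrm{Lan}_J G$ at an object $Y$ is precisely a resolution $JY_0\to Y$. Once (1) is in hand, the pseudo\-equivalence description of $R$ is available and your preservation argument goes through.

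Your (1)$\Leftrightarrow$(2) is too vague to stand. For (2)$\Rightarrow$(1) the paper's argument is short and concrete: given $X$ in $\cat C\exreg$, choose a cover $c:IX_0\to RX$ in $\cat C\exlex$ (possible since $I$ is resolvent there), and transpose across $K\dashv R$ to $c^t:JX_0\to X$; since $K$ is regular and the counit is invertible, $c^t$ is a regular epimorphism, and the lifting property for $c^t$ follows by transposing the lifting property for $c$. Your ``reads off canonical presentations $Y\simeq KRY$'' does not capture this, and in particular does not explain why the resulting map is a regular epimorphism with the required weak projectivity.
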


Thus finite limit preserving functors from $\Asm$ to exact $\cat E$ extend to finite limit preserving functors $\Eff\to \cat E$.

The causes of resolvency are the (weak) dependent products and the generic monomorphisms of $\Asm$. Weak dependent products in $\cat C$ make $\cat C\exlex$ a locally Cartesian closed category. The following notion helps to explain the role of the generic monomorphism.

\begin{defin} A monomorphism $m$ of $\cat C\exlex$ is \emph{open} if $m$ is right orthogonal to $Ie$ for each regular epimorphism $e$ of $\cat C$.
\end{defin}

The generic monomorphism induces a classifier $t:1\to \Omega$ for open monomorphisms. This object $\Omega$ is \emph{local}, which means that $\cat C\exlex(I-,\Omega)$ is a sheaf for the \emph{regular topology} on $\cat C$. Local objects form a subtopos $\cat S$ of $\cat C\exlex$, 
such that $I:\cat C \to \cat C\exlex$ factors through the embedding $\cat S \to \cat C\exlex$ by a regular functor. The right adjoint right inverse of $K$ comes from the universal properties of the functors $I$ and $J$.

It is not hard to see that $\cat C$ has weak dependent products and a generic monomorphism if $J:\cat C \to \cat C\exreg$ is resolvent and if $\cat C\exreg$ is a topos, so there is an equivalence.

\begin{theorem*}[\ref{ThB}] For each regular category $\cat C$ the following are equivalent:
\begin{enumerate}
\item $\cat C$ has weak dependent products and a generic monomorphism;
\item $\cat C\exreg$ is a topos and $J:\cat C\to\cat C\exreg$ is resolvent.
\end{enumerate}
\end{theorem*}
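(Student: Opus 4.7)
The plan is to handle the two implications separately, using Theorem~\ref{ThA} to replace ``resolvent'' in (2) by the condition that $K:\cat C\exlex\to\cat C\exreg$ has a right adjoint right inverse.

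For (2)~$\Rightarrow$~(1), I would exploit that each $JC$ (with $C\in\cat C$) is a regular projective in the topos $\cat C\exreg$ and that these objects jointly generate $\cat C\exreg$ under regular epis. Given $f:B\to A$ in $\cat C$ and any $g:X\to B$, form the topos-theoretic dependent product $\Pi_{Jf}Jg\to JA$, cover it by a regular epi from some $JY$, and descend through full-faithfulness of $J$ to a morphism $h:Y\to A$; projectivity of the objects in $J(\cat C)$ then delivers the lifts that make $h$ a \emph{weak} dependent product of $g$ along $f$. For a generic monomorphism, cover the subobject classifier $\Omega$ of $\cat C\exreg$ by $JG\twoheadrightarrow\Omega$, pull back the true map $1\hookrightarrow\Omega$, and use that subobjects of $JG$ in $\cat C\exreg$ come from subobjects of $G$ in $\cat C$ to obtain a mono $m:M\hookrightarrow G$; projectivity of $JC$ lifts any classifying map $JC\to\Omega$ through the cover, yielding via full-faithfulness of $J$ a morphism $C\to G$ in $\cat C$ that pulls $m$ back to the given subobject.

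For (1)~$\Rightarrow$~(2), I would follow the route telegraphed in the introduction. Weak dependent products in $\cat C$ make $\cat C\exlex$ locally Cartesian closed (a classical fact), and the generic monomorphism of $\cat C$ assembles into a classifier $t:1\to\Omega$ for open monomorphisms of $\cat C\exlex$. Local objects then form a reflective subtopos $\cat S\hookrightarrow\cat C\exlex$, and $I:\cat C\to\cat C\exlex$ factors regularly through $\cat S$ by a functor $I|_\cat S$. The universal property of $J$ extends $I|_\cat S$ uniquely to a regular $\bar I:\cat C\exreg\to\cat S$, and a diagram chase using the universal properties of $I$ and $J$ shows that $\bar I$ is an equivalence, inverse to the restriction of $K$ to $\cat S\hookrightarrow\cat C\exlex$. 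Transporting the reflection adjunction $L\dashv(\cat S\hookrightarrow\cat C\exlex)$ along $\bar I$ produces a right adjoint right inverse $R=(\cat C\exreg\xrightarrow{\bar I}\cat S\hookrightarrow\cat C\exlex)$ of $K$. Theorem~\ref{ThA} then gives resolvency, and the equivalence $\cat S\simeq\cat C\exreg$ shows $\cat C\exreg$ is a topos.

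The main technical hurdle is the equivalence $\cat S\simeq\cat C\exreg$. Essential surjectivity and full faithfulness of $\bar I$ amount to the claim that $K$ and the reflector $L$ invert the same morphisms of $\cat C\exlex$, namely the regular covers originating from $\cat C$: $K$ does so by its universal property among regular functors into exact categories, $L$ by the open/local orthogonality defining $\cat S$. Identifying these two classes is essentially bookkeeping, but it requires a careful analysis of how $K$ acts on the pseudo-equivalence relations generating $\cat C\exlex$, and of how the sheaf condition on local objects rephrases the exact quotients that $J$ produces.
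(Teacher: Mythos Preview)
Your direction (2)$\Rightarrow$(1) is essentially the paper's argument (Proposition~\ref{lccc} for weak dependent products, with an analogous argument for the generic monomorphism), but the justification you give is wrong: objects $JC$ are \emph{not} regular projective in $\cat C\exreg$ in general --- for instance $\nabla 2$ is not projective in $\Eff$. What you actually need, and what hypothesis~(2) provides, is the factorization property built into \emph{resolutions}: every map $JC\to D$ factors through the chosen resolution $c_D:JD_0\to D$. So take a resolution of $\Pi_{Jf}Jg$ (respectively of $\Omega$), not an arbitrary cover, and invoke resolvency rather than projectivity; with that correction your argument goes through and matches the paper.

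For (1)$\Rightarrow$(2) there is a genuine gap. You assert that local objects form a ``reflective subtopos'' of $\cat C\exlex$, but $\cat C\exlex$ is only locally Cartesian closed, not a topos, so you cannot appeal to Lawvere--Tierney sheafification, and you give no other argument. Establishing that local objects form a topos is the real content of this direction, and the paper spends most of Section~4 on it. Its method is indirect: it introduces \emph{Stone objects} --- those $X$ for which $\eta_X:X\to\Omega^{\Omega^X}$ is the equalizer of $T\eta_X$ and $\eta_{TX}$ --- and uses Beck's monadicity theorem for $\Omega^{(-)}:\cat S\dual\to\cat S$ to show that Stone objects form a topos; it then proves that Stone and local coincide by characterizing both as the open subobjects of powers $\Omega^X$ (Lemmas~\ref{Stoneequiv} and~\ref{localequiv}). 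You have also misplaced the difficulty: once $\cat L$ is known to be exact, the equivalence $\cat L\simeq\cat C\exreg$ is an immediate consequence of the universal property of $J$ (Corollary~\ref{LisCer}), and resolvency of $J$ follows because $I:\cat C\to\cat L$ is already resolvent (Lemma~\ref{iberegular}) --- there is no need to produce a reflector first.
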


Section \ref{TriThe} explores an important connection with \emph{tripos theory}. The \emph{tripos-to-topos construction} turns each tripos $T$ on each category with finite products $\cat B$ into topos $\cat B[T]$ together with a finitely continuous functor $\nabla:\cat B\to\cat B[T]$. The topos $\cat B[T]$ is the ex/reg completion of a regular full subcategory $\Asm(T)$ (of $T$-assemblies) which has weak dependent products and a generic monomorphism. Morphisms of triposes induce finitely continuous functors between the toposes they represent \citep[see][]{MR2479466,Pittsthesis, MR578267,a2CAotTtTC} because the embedding $\Asm(T) \to \cat B[T]$ is resolvent.

\subsection{Related literature}
The exact completion constructions which play an important role throughout this paper come from Aurelio Carboni's works \citep{MR1600009, MR1358759, MR948482, MR678508,MR1787592}. 

This paper extends Mat\'ias Menni's analysis of toposes that are exact completions \citep{MR1900904, Menni00exactcompletions, MR1948025, MR1870615, MR2320014}. Menni proved that the ex/reg completion of a locally Cartesian closed category regular category with generic monomorphism is a topos. I show that weakly Cartesian closed works too and add a characterization of toposes of this form.

It does not look like regular categories need a generic monomorphism to turn their ex/reg completions into toposes, but I have not found an example. 

Ex/reg completions play a role in the theory of realizability toposes, each of which is an ex/reg completion of a locally Cartesian closed with a generic monomorphism, like Hyland's \emph{effective topos} $\Eff$ \citep{MR717245, MR2479466} is an ex/reg completion of $\Asm$. An alternative way to study these toposes is through \emph{tripos theory} \citep{MR2479466,Pittsthesis, MR578267,a2CAotTtTC}. In fact the central idea in this paper--\emph{resolvent functors}--connects to the \emph{weakly complete objects} of tripos theory. 

\newcommand\Fam{\mathsf{Fam}}
I have only started to work out the connections with other constructions of exact categories. If $\cat E$ is exact, then a resolvent embedding $F:\cat C\to\cat E$ forces $\cat C$ to have weak finite limits. Therefore finite completeness probably is a stronger than necessary assumption for several results in this paper. For another example: let $\Fam(\cat C)$ be the category of families in $\cat C$ (see example \ref{famex} for the definition) and let $\cat E$ be a Grothendieck topos; a coproduct preserving resolvent embedding $\Fam(\cat C) \to \cat E$ implies that $\cat E$ is the topos of sheaves for some coverage on $\cat C$. This is a reformulation of Giraud's theorem.

\hide{
\hide{Hyland, Pitts, Frey}
My paper stays within the sandbox of finitely complete categories, but exact completions are defined for categories which are finitely complete in a weakened sense . Moreover, Michael Shulman shows that both the ex/lex and the ex/reg completion are special cases of the exact completion of a \emph{unary site} in \citet{ECnSS}, which in turn is a member of a family of colimit completion construction for sites which includes the topos of sheaves. There is much to explore here concerning completions which are toposes.

Finally, the ex/lex completion of a category has some parallels to the construction of the category of Kan complexes in the same category. In this analogy the generic monomorphism is a generic small Kan fibration. Interesting \emph{$\infty$-toposes} and other models of \emph{homotopy type theory} may arise this way.
}

\section{Preliminaries} 
The major part of this paper concerns \emph{finitely complete categories} and \emph{finitely continuous functors} between these categories, i.e.\ categories with finite limits and functors that preserve those limits. Of these the regular and exact (finitely complete) categories play a special role.

\begin{remark} In this paper a category \emph{has} certain limits and colimits, if there are functors and natural transformations which provide limit cones and colimit cocones. For small categories this is equivalent to the axiom of choice. \end{remark}

I regularly refer to regular epimorphisms $e:X\to Y$ as \emph{covers} of $Y$.
Exact completions are ways to add quotient objects to a categories in order to make them exact.

\newcommand\FC{\mathsf{FC}}
\newcommand\Reg{\mathsf{Reg}}
\newcommand\Ex{\mathsf{Ex}}
\begin{defin}
Let $\FC$ be the (2,1)-category of finitely complete categories, finitely continuous functors and natural isomorphisms of functors. Let $\Reg$ be the sub-(2,1)-category of regular categories and functors and let $\Ex$ be the sub-(2,1)-category exact categories and regular functors. For each finitely complete category $\cat C$ an \emph{ex/lex completion} is an exact category $\cat C\exlex$ with a finitely continuous functor $I:\cat C \to \cat C\exlex$ which induces an equivalence of groupoids $\Ex(\cat C\exlex,\cat E) \to \FC(\cat C,\cat E)$. For each regular $\cat D$ an \emph{ex/reg completion} is an exact category $\cat D\exreg$ with a regular functor $I:\cat D \to \cat D\exlex$ which induces an equivalence of groupoids $\Ex(\cat C\exlex,\cat E) \to \Reg(\cat C,\cat E)$.
\end{defin}

\newcommand\id{\mathrm{id}}
Several useful properties of completions follow.

\begin{lemma} For every object $X$ of $\cat C\exlex$ there is an object $X_0$ of $\cat C$ together with a regular epimorphism $c_X:IX_0 \to X$. Moreover, each regular epimorphism $X\to IY$ is split. For every object $X$ of $\cat C\exreg$ there is an object $X_0$ of $\cat C$ together with a regular epimorphism $c_X:JX_0 \to X$.\label{proj} \end{lemma}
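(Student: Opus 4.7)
My plan is to prove the three assertions by exploiting the standard presentation of $\cat C\exlex$ and $\cat C\exreg$ in terms of (pseudo-)equivalence relations, together with a closure argument for the covering claims.

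For the first assertion, let $\cat D\subseteq\cat C\exlex$ be the full subcategory of objects admitting a regular epimorphism from some $IX_0$. I would show $\cat D=\cat C\exlex$ by verifying that $\cat D$ contains $I(\cat C)$ trivially and is closed under finite limits and under quotients by equivalence relations. Closure under quotients is immediate, since regular epimorphisms compose in the regular category $\cat C\exlex$. Closure under finite limits uses that $I$ preserves finite limits and that regular epimorphisms are stable under pullback: given covers $IX_0\to X$ and $IY_0\to Y$, the canonical map $I(X_0\times Y_0)\to X\times Y$ is a cover (it factors through $IX_0\times Y$ and $X\times Y$ as pullbacks of covers), and one argues similarly for equalisers. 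Since every object of $\cat C\exlex$ is generated from $I(\cat C)$ by these operations, it follows that $\cat D=\cat C\exlex$.

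For the second assertion, I would use the fact that objects of $I(\cat C)$ are regular projective in $\cat C\exlex$, which is a hallmark property of the ex/lex completion. A direct argument proceeds via the presentation: represent an arbitrary $X$ of $\cat C\exlex$ as a formal quotient $X_0/R$ of a pseudo-equivalence relation; a regular epimorphism $e:X\to IY$ amounts to a morphism $f:X_0\to Y$ in $\cat C$ that coequalises $R$, and, because $IY$ is the discrete equivalence relation on $Y$, the factorisation forces $f$ to admit a section, which in turn yields the desired section of $e$ in $\cat C\exlex$.

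The third assertion, for $\cat C\exreg$, is proved by precisely the same closure argument as the first, with the simplification that one may work with honest equivalence relations throughout because $\cat C$ is already regular and $J$ is a regular functor. The step I expect to be the subtlest is the projectivity claim: it is the only one that requires opening the explicit construction of $\cat C\exlex$ rather than relying purely on its universal property, whereas the covering statements reduce to formal manipulation of covers and finite limits.
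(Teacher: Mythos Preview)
The paper simply cites Carboni; the standard argument there proceeds directly from the explicit presentation of $\cat C\exlex$ and $\cat C\exreg$ as categories of (pseudo-)equivalence relations in $\cat C$, where the canonical map from $IX_0$ to the object represented by $(X_0,X_1,d_0,d_1)$ is the required cover, and the concrete description of regular epimorphisms in $\cat C\exlex$ yields projectivity of each $IY$.

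Your closure argument for the first and third assertions has a gap at equalisers: pulling a cover $IX_0\to X$ back along an equaliser $E\hookrightarrow X$ yields a cover of $E$ by a \emph{subobject} of $IX_0$, and you have not shown that subobjects of $I$-objects lie in $\cat D$. (They do, but that is a separate fact about the completions, itself proved via the construction.) More fundamentally, the premise that ``every object of $\cat C\exlex$ is generated from $I(\cat C)$ by these operations'' already needs the construction to justify---and once the construction is open, the covering claim is immediate, since every object \emph{is} presented as a quotient of some $IX_0$. The closure detour is therefore both incomplete and redundant; you do better to argue directly from the presentation, as you already concede is necessary for the second assertion. Your sketch for that second assertion also misfires: any morphism $X\to IY$, not just the regular epimorphisms, is represented by some $f:X_0\to Y$ coequalising the relation, so that description alone cannot ``force $f$ to admit a section''. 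The missing ingredient is the characterisation of regular epimorphisms in $\cat C\exlex$: a map represented by $f_0:X_0\to Y_0$ is a regular epimorphism exactly when there exists $s:Y_0\to X_0$ with $f_0\circ s$ equivalent to $\id_{Y_0}$ in the codomain's pseudo-equivalence relation; for the discrete codomain $IY$ this equivalence is equality, so $f_0\circ s=\id_Y$ and $Is$ splits $e$.
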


\begin{proof} See \citet{MR1600009}. \end{proof}

\section{Resolvent functors}
Certain full subcategories of an exact category $\cat E$ allow the representation of each object of $\cat E$ by a kind of generalized congruence called a \emph{pseudoequivalence relation} and each morphism by an equivalence class of morphisms of pseudoequivalence relations. Since each finitely continuous functor $F$ defined on this subcategory preserves the representations and their equivalences, one can build the left Kan extension of $F$ to the whole category by sending a representation $R$ to the quotient of $FR$, provided that the codomain of $F$ has enough quotient objects. This section introduces and explains the required property of the subcategory as a condition on the inclusion functor.

\subsection{Pseudoequivalence relations} 
Pseudoequivalence relations are an important class of parallel pairs of arrows, which have coequalizers in every exact category. They are also the workhorses of this section.

\begin{defin} A parallel pair of arrows $(d_0,d_1):X_1 \rightrightarrows X_0$ is a \emph{pseudoequivalence relation} or \emph{trivial bigroupoid} if there are morphisms $r:X_0\to X_1$, $s:X_1\to X_1$ and $t:X_1\times_{X_0} X_1\to X_1$ which satisfy $d_0\circ r = d_1\circ r = \id_{X_0}$, $(d_0,d_1)\circ s = (d_1,d_0)\circ s$, $(d_0,d_1)\circ t = (d_0\circ \pi_0, d_1\circ \pi_1)$.

\[ \begin{array}{ccc} 
\xymatrix{
 X_0 \ar[r]^r\ar[d]_r\ar[dr]|{\id_{X_0}} & X_1\ar[d]^{d_0} \\
 X_1 \ar[r]_{d_1} & X_0
}&
\xymatrix{
 X_1 \ar[r]^s \ar[d]_{d_0} \ar[dr]|{d_1} & X_1 \ar[dl]|{d_1} \ar[d]^{d_0} \\
 X_0 & X_0
}
\xymatrix{
 X_1\ar[d]_{d_0} & X_1\times_{X_0} X_1 \ar[l]_(.6){\pi_0}\ar[r]^(.6){\pi_1} \ar[d]|t & X_1 \ar[d]^{d_1} \\
 X_0 & X_1\ar[l]^{d_0}\ar[r]_{d_1} & X_0
}
\end{array}\]

A \emph{morphism of pseudoequivalence relations} $(X_0,X_1,d_0,d_1) \to (Y_0,Y_1,d'_0,d'_1)$ is simply a pair of morphisms $f_0: X_0\to X_0$ and $f_1:X_1\to X_1$ such that $d_i \circ f_1 = f_0 \circ d_i$ for $i=0,1$.
\[ \xymatrix{
 X_1 \ar[r]^{f_1}\ar[d]_{d_i} & X_1\ar[d]^{d_i} \\
 X_0 \ar[r]_{f_0} & X_0
} \]

Two parallel morphisms $(f_0,f_1),(g_0,g_1):(X_0,X_1,d_0,d_1) \to (Y_0,Y_1,d'_0,d'_1)$ are \emph{equivalent} if there is a map $\eta:X_0 \to Y_1$ such that $(d'_0,d'_1)\circ\eta = (f_0, g_0)$.
\[ \xymatrix{
 X_0 \ar[r]^{f_0}\ar[d]_{g_0}\ar[dr]|\eta & Y_0 \\
 Y_0 & Y_1 \ar[u]_{d'_0}\ar[l]^{d'_1}
} \]

\end{defin}

\begin{lemma} Exact categories have coequalizers for pseudoequivalence relations. Morphisms of pseudoequivalence relations induce morphisms between coequalizers. Equivalent morphisms induce the equal morphisms of the coequalizers. \end{lemma}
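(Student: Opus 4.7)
The plan is to construct the coequalizer by reducing to an honest equivalence relation via image factorization, and then to verify the functoriality and invariance properties by standard diagram chases.

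First I would take the image factorization of the pair $(d_0,d_1)\colon X_1\to X_0\times X_0$, obtaining a regular epi $e\colon X_1\twoheadrightarrow R$ and a mono $m\colon R\rightarrowtail X_0\times X_0$. Writing $r_0,r_1$ for the two legs of $R$, the central step is to show that $(r_0,r_1)$ is an \emph{equivalence relation} on $X_0$. Reflexivity is immediate: $e\circ r\colon X_0\to R$ satisfies $m\circ e\circ r=(d_0,d_1)\circ r=(\id,\id)$, so the diagonal factors through $R$. Symmetry follows similarly from $s$: the equation $m\circ e\circ s=\sigma\circ m\circ e$ (where $\sigma$ swaps the factors) combined with $e$ being epi forces $R$ to be closed under swapping. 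Transitivity is the most delicate point: the multiplication $t$ gives a map $X_1\times_{X_0}X_1\to X_1\to R$; the canonical map $X_1\times_{X_0}X_1\to R\times_{X_0}R$ is a regular epi (as regular epis are stable under pullback in a regular category), so $t$ descends to a map $R\times_{X_0}R\to R$ witnessing transitivity.

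Once $(r_0,r_1)$ is an equivalence relation, exactness provides a coequalizer $q\colon X_0\twoheadrightarrow Q$ of $r_0,r_1$. Because $e$ is an epimorphism, any map $h\colon X_0\to Z$ satisfies $h\circ r_0=h\circ r_1$ iff $h\circ d_0=h\circ d_1$, so $q$ is also the coequalizer of $(d_0,d_1)$.

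For the second statement, let $(f_0,f_1)\colon(X_\bullet,d_\bullet)\to(Y_\bullet,d'_\bullet)$ be a morphism of pseudoequivalence relations and $q'\colon Y_0\twoheadrightarrow Q'$ the coequalizer of $(d'_0,d'_1)$. Then
\[ q'\circ f_0\circ d_0 = q'\circ d'_0\circ f_1 = q'\circ d'_1\circ f_1 = q'\circ f_0\circ d_1, \]
so the universal property of $q$ produces a unique $\bar f\colon Q\to Q'$ with $\bar f\circ q=q'\circ f_0$. For the third statement, suppose $(f_0,f_1)$ and $(g_0,g_1)$ are equivalent via $\eta\colon X_0\to Y_1$ with $(d'_0,d'_1)\circ\eta=(f_0,g_0)$. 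Then $q'\circ f_0=q'\circ d'_0\circ\eta=q'\circ d'_1\circ\eta=q'\circ g_0$, and since $q$ is an epimorphism we conclude $\bar f=\bar g$.

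The main obstacle is the transitivity step in the construction of $R$: it is the one place where I genuinely need regularity (stability of regular epis under pullback) rather than just the universal property of image factorizations. Everything else is a straightforward diagram chase using that $e$ and $q$ are epimorphisms.
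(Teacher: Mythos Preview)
Your proof is correct and follows the same route as the paper: take the image of $(d_0,d_1)$, observe it is an equivalence relation, invoke exactness, and then run the same diagram chases for functoriality and invariance under equivalence. The only difference is that the paper simply asserts that the image is a congruence, whereas you spell out reflexivity, symmetry, and transitivity; your transitivity argument via the regular epi $X_1\times_{X_0}X_1\twoheadrightarrow R\times_{X_0}R$ is exactly the detail the paper suppresses.
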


\begin{proof} If $(X_0,X_1,d_0,d_1)$ is a pseudoequivalence relation, then the image of $(d_0,d_1):X_1\to X_0\times X_0$ is a congruence. This congruence has a coequalizer by exactness and the coequalizer of the congruence is the coequalizer of the pseudoequivalence relation.

If $(f_0,f_1):(X_0,X_1,d_0,d_1) \to (Y_0,Y_1,d'_0,d'_1)$ is a morphism, $x:X_0 \to X$ is the coequalizer of $d_0,d_1$ and if $y:Y_0 \to Y$ is the coequalizer of $d'_0$ and $d'_1$, then $y\circ f_0\circ d_0 = y\circ d'_0 \circ f_1 = y\circ d'_1\circ f_1 = y\circ f_0 \circ d_1$ and therefore $y\circ f_0$ factors uniquely through $x$.

\[ \xymatrix{
X_1 \ar[d]_{f_1} \ar@<-1ex>[r]_{d_0}\ar@<1ex>[r]^{d_0} & X_0\ar[d]^{f_0} \ar[r]^x & X\ar@{.>}[d] \\
Y_1 \ar@<-1ex>[r]_{d'_0}\ar@<1ex>[r]^{d'_0} & Y_0 \ar[r]_y & Y \\
}\]

If $\eta$ is an equivalence of $(f_0,f_1)$ and $(g_0,g_1)$, then $f\circ x = y\circ f_0  = y \circ d'_0\circ \eta = y\circ d'_1\circ \eta = y\circ g_0 = g\circ x$. Since $x$ is an epimorphism $f=g$.
\hide{\[ \xymatrix{
Y_1 \ar[rrr]_{d'_0}\ar[dd]^{d'_1} &  && Y_0\ar[dd]^y \\
& X_0 \ar[urr]^{f_0} \ar[ul]|\eta \ar[dl]_{g_0}\ar[r]^x & X \ar@<1ex>[dr]^f\ar@<-1ex>[dr]_g\\
Y_0 \ar[rrr]_y&  && Y
}\]}
\[ \xymatrix{
& Y_0\ar@/^3ex/[drr]^y \\
Y_1\ar[ur]^{d'_0}\ar[dr]_{d'_1} & X_0\ar[l]_\eta \ar[u]_{f_0}\ar[d]^{g_0} \ar[r]^x & X \ar@<1ex>[r]^f\ar@<-1ex>[r]_g & Y \\
& Y_0\ar@/_3ex/[urr]_y
}\]
\end{proof}

\subsection{Resolvent functors}
This subsection introduces \emph{resolvent functors} and should that if the inclusion of a subcategory $\cat A$ of an exact category $\cat B$ is resolvent, then every object of $\cat B$ is the equalizer of some pseudoequivalence relation in $\cat A$.

This subsection introduces the quality the inclusion of a subcategory of an exact category must have, in order to represent every morphism as a morphism of pseudoequivalence relations.

\begin{defin} Let $F:\cat C\to\cat D$ be a arbitrary functor. For each object $D$ of $\cat D$, a \emph{resolution} is an object $D_0$ of $\cat D$ and a morphism $c_D:FD_0 \to D$ with the following two properties.
\begin{enumerate} 
\item The morphism $c_D$ is the coequalizer of its own kernel pair.
\item For each morphism $f:FC\to D$ there is an $f_0:C\to D_0$ such that $f=c_D\circ f_0$. 
\[ \xymatrix{
& FD_0 \ar[d]^{c_D} \\
FC\ar[r]_f \ar@{.>}[ur]^{Ff_0} & D
}\]
\end{enumerate}
A functor $F:\cat C\to\cat D$ is \emph{resolvent} if each object $D$ of $\cat D$ has a resolution.\label{resolve}
\end{defin}

\begin{remark} I assume that there is a function $f\mapsto f_0$ such that $f=c_D\circ If_0$ for all $f:IC\to D$ for the purpose of constructing some functors below. \end{remark}

\begin{example} In an ex/lex completion $I:\cat C \to\cat C\exlex$ the functor $I$ is resolvent by lemma \ref{proj}. \end{example}

\newcommand\Sh{\mathsf{Sh}}
\begin{example} The topos of sheaves $\Sh(H)$ over a complete Heyting algebra $H$ has a subcategory of subsheaves of constant sheaves, which I denote by $\Asm(H)$. The inclusion $\Asm(H) \to\Sh(H)$ is resolvent. \end{example}

\newcommand\dual{^{op}}
\begin{example} Let $\cat C$ be a small regular category. Let $\Fam(\cat C)$ be the category of families in $\cat C$. Objects in $\Fam(\cat C)$ are pairs $(I,f)$ where $I$ is a set and $f:I\to \cat C_0$ assigns an object of $\cat C$ to each member of $I$. A morphism $(I,\alpha) \to (J,\beta)$ is a function $f:I\to J$ together with a function $\phi:I \to \cat C_1$ which assigns a morphism $\phi(i):\alpha(i) \to \beta(f(i))$ to each $i\in I$. Because $\cat C$ is regular, $\Fam(\cat C)$ is a regular category and $\Fam(\cat C)\exreg$ is the topos $\Sh(\cat C)$ of sheaves for the regular topology of $\cat C$. The embedding $\Fam(\cat C)\to\Sh(\cat C)$ is resolvent; for each sheaf $S:\cat C\dual \to \Set$ the resolution comes from the bundle $\coprod_{X\in \cat C_0} SX \to \cat C_0$ of local sections. \label{famex}
\end{example}

The resolvent embeddings are connected to the pseudoequivalence relations in the following way.

\begin{lemma} If $\cat C$ is finitely complete, $\cat D$ is regular and $F:\cat C \to\cat D$ is a resolvent embedding, then every object of $\cat D$ is the coequalizer of a pseudoequivalence relation in $\cat C$ and morphisms of pseudoequivalence relations in $\cat C$ induce every morphism of $\cat D$. \end{lemma}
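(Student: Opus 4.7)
The plan is to apply resolvency twice --- once to $D$ itself and once to the kernel pair of the resulting cover --- and then to transfer the canonical equivalence relation structure on a kernel pair back to $\cat C$ through the second resolution, using full faithfulness of $F$. Concretely, pick a resolution $c_D:FD_0\to D$; by property 1 of a resolution, $c_D$ is a regular epimorphism, and in the regular category $\cat D$ it is the coequalizer of its own kernel pair $(k_0,k_1):K\rightrightarrows FD_0$. Pick a second resolution $c_K:FD_1\to K$, and use fullness of $F$ to produce morphisms $d_i:D_1\to D_0$ in $\cat C$ with $Fd_i=k_i\circ c_K$ for $i=0,1$.

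To equip $(d_0,d_1)$ with a pseudoequivalence structure I would exploit the structure of $(k_0,k_1)$ as an equivalence relation in $\cat D$: its diagonal $\Delta:FD_0\to K$, swap $\sigma:K\to K$, and transitivity $\tau:K\times_{FD_0}K\to K$. Property 2 of a resolution lifts $\Delta$ and $\sigma\circ c_K$ through $c_K$ to obtain $r:D_0\to D_1$ and $s:D_1\to D_1$ in $\cat C$, and faithfulness of $F$ converts the equations that hold in $\cat D$ into the defining equations for reflexivity and symmetry. For transitivity I form the pullback $D_1\times_{D_0}D_1$ in $\cat C$ (available by finite completeness), apply $F$ to its projections and postcompose with $c_K$ to obtain a cone over the cospan defining $K\times_{FD_0}K$ in $\cat D$, hence a comparison morphism; composing with $\tau$ and lifting once more through $c_K$ produces the required $t:D_1\times_{D_0}D_1\to D_1$. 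That $c_D$ is in fact the coequalizer of $(Fd_0,Fd_1)$ follows because $c_D$ coequalizes $(k_0,k_1)$ and hence $(Fd_0,Fd_1)=(k_0,k_1)\circ c_K$; conversely, $c_K$ is a regular epimorphism, so any morphism that coequalizes $Fd_0$ and $Fd_1$ also coequalizes $k_0$ and $k_1$ and therefore factors uniquely through $c_D$.

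For the final clause on morphisms, a map $f:D\to E$ yields $f\circ c_D:FD_0\to E$, which lifts through a resolution $c_E:FE_0\to E$ to some $f_0:D_0\to E_0$. Then $c_E\circ F(f_0\circ d_0)=c_E\circ F(f_0\circ d_1)$, so the pair $(F(f_0 d_0),F(f_0 d_1))$ factors through the kernel pair $K^E$ of $c_E$; lifting this factorisation through a second resolution $c_{K^E}:FE_1\to K^E$ produces $f_1:D_1\to E_1$ with $e_i\circ f_1=f_0\circ d_i$ by faithfulness, so $(f_0,f_1)$ is a morphism of pseudoequivalence relations inducing $f$ between the coequalizers. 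The step I expect to require the most care is the transitivity of $(d_0,d_1)$, since $F$ is not assumed to preserve limits and $F(D_1\times_{D_0}D_1)$ need not coincide with $FD_1\times_{FD_0}FD_1$ in $\cat D$; but the argument only needs the comparison morphism from the former into the latter, which is supplied for free by the universal property of the $\cat D$-pullback. Beyond this, the proof is a careful bookkeeping exercise in properties 1 and 2 of resolutions combined with full faithfulness of $F$.
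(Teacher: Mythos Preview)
Your proposal is correct and follows essentially the same route as the paper's proof: resolve $D$, resolve the kernel pair, transport the equivalence-relation structure back along full faithfulness, and lift morphisms through resolutions on each level. You have in fact supplied the details the paper omits (it leaves ``the reader may check that these form a pseudoequivalence relation'' and the coequalizer verification as exercises), and your care about the comparison map $F(D_1\times_{D_0}D_1)\to K\times_{FD_0}K$ in place of an unwarranted limit-preservation assumption is exactly right.
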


\begin{proof} 
For each object $X$ of $\cat D$ a resolution $c_X:FX_0 \to X$ provides $X_0$. The resolution $c_X$ has a kernel pair $(p,q):W\to FX_0$ and $W$ has its own resolution $c_W:FX_1 \to W$. By fullness there is a parallel pair of arrows $d_0,d_1:X_1\to X_0$ such that $Fd_0 = p\circ c_W$ and $Fd_1 = q\circ c_W$. The reader may check that these form a pseudoequivalence relation.

Let $d'_0,d'_1:Y_1 \to Y_0$ be a pseudoequivalence relation and let $c_Y:FY_0 \to Y$ be the coequalizer of $Fd'_0$ and $Fd'_1$. For each morphism $f:X\to Y$, $f\circ c_X:FX_0 \to Y$ factors through $c_Y:FY_0 \to Y$. Because $f_0$ commutes with the congruences determined by the kernel pairs, there is an $f_1:X_1\to Y_1$ which turns $(f_0,f_1)$ into a morphism of pseudoequivalence relations. Finally, if $g_0:X_0\to Y_0$ satisfies $c_Y\circ Ig_0 = f\circ c_X$, then $(f_0,g_0): FX_0 \to FY_0\times FY_0$ factors through $(d'_0,d'_1):FY_1\to FY_0\times FU_0$ inducing an equivalence.

Note that faithfulness makes the diagrams in $\cat C$ commute.
\end{proof}

\begin{defin} If $F:\cat C\to\cat D$ is a left exact functor, $(X_0,X_1,d_0,d_1)$ is a pseudoequivalence relation and $c_X:FX_0 \to X$ is a coequalizer of $Fd_0$ and $Fd_1$, then $(X_1,X_0,d_1,d_0)$ \emph{represents} $X$. Similarly, if $(f_0,f_1)$ is a morphism $(X_0,X_1,d_0,d_1)\to (Y_0,Y_1,d'_0,d'_1)$, if $c_X:FX_0 \to X$ and $c_Y:FY_0 \to Y$ are coequalizers of $d_0$, $d_1$ and $d'_0$, $d'_1$ respectively, and if $f\circ c_X = c_Y\circ f_0$, then $(f_0,f_1)$ \emph{represents} $f$. \end{defin}

\subsection{Kan extensions}
Resolvency is a variation of Freyd's \emph{solution set condition}. Instead of rights adjoints, the condition induces left Kan extensions of certain functors. Throughout this subsection $\cat C$ and $\cat D$ are finitely complete, $F:\cat C\to\cat D$ is a finitely continuous resolvent full and faithful functor and $\cat E$ is exact.

\begin{lemma} Every finitely continuous $G:\cat C\to\cat E$ has a left Kan extension $F_!(G)$ along $F$. Moreover, the unit of the adjunction $F_!\dashv F^*$ is a natural isomorphism. \label{LKE}
\end{lemma}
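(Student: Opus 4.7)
The plan is to build $F_!(G)(X)$ object by object as a coequalizer in $\cat E$ of a pseudoequivalence relation in $\cat C$ representing $X$. By the previous lemma each $X$ of $\cat D$ admits such a representation $(X_0,X_1,d_0,d_1)$ with $c_X\colon FX_0\to X$ the coequalizer of $Fd_0,Fd_1$. Since $G$ preserves finite limits, $(X_0,X_1,Gd_0,Gd_1)$ is a pseudoequivalence relation in $\cat E$, so exactness supplies a coequalizer $q_X\colon GX_0\to Q_X$; set $F_!(G)(X)=Q_X$. For a morphism $f\colon X\to Y$ of $\cat D$, pick a representing morphism $(f_0,f_1)$ of pseudoequivalence relations and let $F_!(G)(f)\colon Q_X\to Q_Y$ be the unique induced map between coequalizers.

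Next I would verify well-definedness and functoriality. Two representatives of the same $f$ are equivalent as morphisms of pseudoequivalence relations --- the comparison map is obtained from fullness of $F$ together with the kernel pair of $c_Y$ --- so the earlier lemma on coequalizers of pseudoequivalence relations shows equivalent representatives induce equal maps $Q_X\to Q_Y$. Compositions of representatives represent compositions up to equivalence, which gives functoriality.

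For the unit, I would choose the resolution $\id_{FC}\colon FC\to FC$ whenever $X=FC$: condition (2) of a resolution holds by fullness of $F$, and condition (1) because $\id$ trivially coequalizes its diagonal kernel pair. The representing pseudoequivalence relation is then $(C,C,\id,\id)$, making $F_!(G)(FC)$ the coequalizer of $\id_{GC}$ with itself, canonically $GC$ itself. The resulting isomorphisms $\eta_C\colon GC\to F_!(G)(FC)$ are natural by construction of $F_!(G)$ on morphisms of the form $Fh$, and form the components of the unit of $F_!\dashv F^*$.

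To establish the Kan extension universal property, given any $H\colon\cat D\to\cat E$ and $\alpha\colon G\to HF$, the composite $H(c_X)\circ\alpha_{X_0}\colon GX_0\to HX$ coequalizes $Gd_0$ and $Gd_1$: naturality of $\alpha$ rewrites $\alpha_{X_0}\circ Gd_i$ as $HFd_i\circ\alpha_{X_1}$, and $c_X\circ Fd_i$ is independent of $i$. This composite therefore factors uniquely through $q_X$ as $\beta_X\colon F_!(G)(X)\to HX$, and naturality and uniqueness of $\beta$ both follow because each $q_X$ is a regular epimorphism. The main obstacle I anticipate is bookkeeping: representations are only determined up to equivalence, so every functoriality or naturality verification reduces to invoking the equivalence-implies-equal-on-coequalizers lemma, and the choices made for different objects and morphisms must cohere.
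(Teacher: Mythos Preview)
Your approach is essentially identical to the paper's: build $F_!(G)(X)$ as the coequalizer in $\cat E$ of a representing pseudoequivalence relation, define the action on morphisms via representatives and invoke the equivalence-implies-equal-on-coequalizers lemma, obtain the unit isomorphism from the trivial self-representation of objects of $\cat C$, and verify universality by factoring $H(c_X)\circ\alpha_{X_0}$ through the coequalizer. The one point you omit that the paper addresses is the finite continuity of $F_!(G)$: the paper sketches why $F_!(G)$ preserves the terminal object and pullbacks (via a weak pullback construction on pseudoequivalence relations), which is needed if the adjunction $F_!\dashv F^*$ is to be read between categories of finitely continuous functors rather than all functors; you may want to add a sentence acknowledging this.
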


\begin{proof} Let $(X_1,X_0,d_1,d_0)$ be a pseudoequivalence relation in $\cat C$ such that $X$ is the coequalizer of $Fd_0$ and $Fd_1$. The image $(GX_0,GX_1,Gd_1,Gd_0)$ has a coequalizer in $\cat E$. So let $F_!(G)(X) = GX_0/GX_1$ be this coequalizer. For any morphism $f:X\to Y$, there are morphisms of pseudoequivalence relations $(f_0,f_1)$ in $\cat C$ to represent it and $(Gf_0,Gf_1)$ determines a unique morphism $F_!(G)(X) \to F_!(Y)$. Let $F_!(G)(f)$ be that morphism. Since equalities between morphisms lift to equivalences of morphisms of pseudoequivalences and since equivalent morphisms of pseudoequivalences induce the same morphisms of coequalizers, $F_!(G)$ preserves compositions and identities. Hence $F_!(G)$ is a functor.

This functor is finitely continuous for reasons I only sketch here. Clearly, $F_!(G)$ preserves the terminal object $1$. There is a weak pullback construction for pseudoequivalence relations which induces pullbacks of coequalizers. Finally, $\id_X:X\to X$ makes sure that any two representation of the same object are connected by pairs of morphisms of pseudoequivalence which are inverses up to equivalence, so pullbacks commute with weak pullbacks up to equivalence.

Concerning the left Kan extension part: there is an isomorphism $\eta:G \to F_!(G)F$ because each object of $\cat C$ represents itself in a trivial way. Hence every natural transformation $F_!(G) \to H$ induces a unique transformation $G \to HF$. The isomorphism $\eta$ is the unit. Suppose $H:\cat D\to\cat E$ is a finitely continuous functor and $\nu:G\to HF$ some natural transformation. For each object $X$ and each resolution $h:FX_0\to X$, $Hh\circ \nu_{X_0}\circ Gd_0 = Hh\circ Hd_0\circ \nu_{X_1} = Hh\circ Hd_1\circ \nu_{X_1} = Hh\circ \nu_{X_0}\circ Gd_1$ and since $F_!(G)(X)$ is the coequalizer of $Gd_0$ and $Gd_1$ there is a unique morphism $\mu_X: F_!(G)X\to HX$. 
\[\xymatrix{
GX_1 \ar[rr]^{Gd_0}\ar[dd]_{Gd_1}\ar[dr]^{\nu_{X_1}} && GX_0 \ar[dr]^{\nu_{X_0}}\ar[dd] \\
 & HFX_1 \ar[rr]^(.3){HFd_0}\ar[dd]_(.3){HFd_1} && HFX_0 \ar[dd]^{Hh} \\
GX_0\ar[dr]_{\nu_{X_0}}\ar[rr] && F_!(G)X \ar[dr]^{\mu_X} \\
& HFX_0\ar[rr]_{Hh} && HX
}\]
By generalization each natural transformation $\nu:G\to HF$ determines a unique transformation $\mu: F_!(G) \to H$. Uniqueness enforces the naturalness of $\mu$. Therefore $F_1(G)$ is the left Kan extension of $G$ along $F$.
\end{proof}

The left Kan extensions are finitely continuous functors which send resolutions to regular epimorphisms. They do not always preserve other regular epimorphisms. This is characteristic of functors which occur as left Kan extensions.

\begin{lemma} If $H:\cat D\to \cat E$ is finitely continuous, then the counit $\epsilon_H:F_!(HF)\to H$ is an isomorphism if and only if $H$ sends resolutions to regular epimorphisms. \end{lemma}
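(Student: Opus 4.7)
The plan is to unpack $\epsilon_H$ concretely using the construction in Lemma \ref{LKE}, and in each direction to exploit the uniqueness of coequalizers. Fix an object $X$ of $\cat D$ and, following that construction, choose a resolution $c_X:FX_0\to X$, form its kernel pair $(p,q):W\to FX_0$, and choose a resolution $c_W:FX_1\to W$; this yields the pseudoequivalence relation $d_0,d_1:X_1\to X_0$ with $Fd_0=p\circ c_W$ and $Fd_1=q\circ c_W$. By definition $F_!(HF)(X)$ is the coequalizer of $HFd_0,HFd_1$ via some canonical $q_X:HFX_0\to F_!(HF)(X)$, and $\epsilon_{H,X}$ is the unique morphism satisfying $\epsilon_{H,X}\circ q_X=Hc_X$ (this is the $\mu$ of Lemma \ref{LKE} instantiated at $\nu=\id_{HF}$).

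For the forward implication, I would observe that $q_X$ is a regular epimorphism (being a coequalizer) and that the composite of a regular epimorphism with an isomorphism is a regular epimorphism; thus if $\epsilon_{H,X}$ is invertible then $Hc_X=\epsilon_{H,X}\circ q_X$ is a regular epi, so $H$ sends the chosen resolution to a regular epimorphism. Since the same argument applies to any resolution (one simply reselects the starting data for $X$), $H$ sends resolutions to regular epimorphisms.

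For the converse, I would show that under the hypothesis $Hc_X$ is itself a coequalizer of $HFd_0,HFd_1$; the universal property will then identify $Hc_X$ with $q_X$ up to the unique iso $\epsilon_{H,X}$. The calculation uses that $H$ preserves finite limits, so $(Hp,Hq)$ is the kernel pair of $Hc_X$, and that $Hc_X$ is by hypothesis a regular epimorphism, hence the coequalizer of this kernel pair in the exact category $\cat E$. Given any $f$ with $f\circ HFd_0=f\circ HFd_1$, i.e.\ $f\circ Hp\circ Hc_W=f\circ Hq\circ Hc_W$, I would cancel $Hc_W$ on the right: since $c_W$ is itself a resolution, $Hc_W$ is a regular epi by hypothesis, hence in particular epic, so $f\circ Hp=f\circ Hq$ and $f$ factors uniquely through $Hc_X$.

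The main obstacle I anticipate is the double application of the hypothesis---once to the resolution $c_X$ of $X$ and once to the resolution $c_W$ of its kernel pair object---which is essential for transporting the coequalizer condition from the pseudoequivalence relation $(HFd_0,HFd_1)$ up to the kernel pair $(Hp,Hq)$. Once each component $\epsilon_{H,X}$ is invertible, naturality of $\epsilon_H$ as a transformation gives the global isomorphism for free.
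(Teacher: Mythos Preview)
Your proof is correct and follows the same approach as the paper's (very terse) argument, which simply observes that when $H$ sends resolutions to regular epimorphisms both $H$ and $F_!(HF)$ send $X$ to the quotient $HFX_0/HFX_1$. Your explicit handling of the forward direction and the double use of the hypothesis (once for $c_X$ to get the coequalizer of the kernel pair, once for $c_W$ to cancel it) spells out details the paper leaves to the reader.
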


\begin{proof} If $(X_0,X_1,d_0,d_1)$ is an arbitrary pseudoequivalence representing $X$, then both $H$ and $F_!(H)$ send $X$ to the quotient $HFX_0/HFX_1$. \end{proof}

There is a converse.

\begin{lemma} Let $\cat D$ be exact, let $F:\cat C\to \cat D$ be fully faithful and finitely continuous and suppose that for each exact $\cat E$, each finitely continuous $G:\cat C\to \cat E$ has a left Kan extension $F_!(G)$ along $F$ and that $F_!(G)F\simeq G$. Then $F$ is resolvent.\label{converse} \end{lemma}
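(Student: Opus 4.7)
Given $D\in\cat D$, the plan is to construct a resolution $c_D\colon FD_0\to D$ by combining two applications of the hypothesis: one with target $\cat C\exlex$ (to exploit the known resolvency of $I$) and one with target $\cat D$ (to link back to $D$). First apply the hypothesis with $\cat E=\cat C\exlex$ and $G=I$, obtaining a finitely continuous $M:=F_!(I)\colon\cat D\to\cat C\exlex$ with $MF\simeq I$. Since $F$ is finitely continuous with exact codomain $\cat D$, the universal property of the ex/lex completion produces a regular functor $\bar F\colon\cat C\exlex\to\cat D$ with $\bar F I\simeq F$. Because $I$ is resolvent (by the earlier example), $MD$ admits a resolution $c'\colon ID_0\to MD$ for some $D_0\in\cat C$, and since $\bar F$ preserves regular epis, $\bar F c'\colon FD_0\to\bar F MD$ is a regular epi in $\cat D$.

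Next apply the hypothesis with $\cat E=\cat D$ and $G=F$ to obtain $L:=F_!(F)$ with $LF\simeq F$, and extract $\alpha\colon L\to\id_{\cat D}$ from the universal property of $L$ applied to $\id_{\cat D}$; the restriction $\alpha F$ is the canonical iso. The key identification is $L\simeq\bar F\circ M$: both are finitely continuous extensions of $F$ along $F$, and the isomorphism follows from the principle that a regular functor post-composed onto a left Kan extension yields the Kan extension of the composition, i.e.\ $\bar F\circ F_!(I)\simeq F_!(\bar F\circ I)\simeq F_!(F)=L$, because $\bar F$ preserves the coequalizers of pseudoequivalence relations used to compute left Kan extensions in the exact setting. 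Transporting $\alpha$ along this isomorphism produces $\beta\colon\bar F M\to\id_{\cat D}$ with $\beta_{FC}$ iso.

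Set $c_D:=\beta_D\circ\bar F c'\colon FD_0\to D$. For the factoring condition, given $f\colon FC\to D$, apply $M$ and use $MF\simeq I$ to obtain a morphism $IC\to MD$; resolvency of $I$ factors it through $c'$ as $c'\circ If_0$ for some $f_0\colon C\to D_0$; applying $\bar F$ and invoking naturality of $\beta$ yields $f=c_D\circ Ff_0$. For the regular-epi condition, the image of $c_D$ equals the image of $\beta_D$; the image functor $N\colon\cat D\to\cat D$ of $\beta$ is finitely continuous and extends $F$, and combining the image factorization $L\twoheadrightarrow N\hookrightarrow\id_{\cat D}$ with the universal property of $L$ forces $N\simeq\id_{\cat D}$, so $\beta_D$ and hence $c_D$ is a regular epi.

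The main obstacle is the identification $L\simeq\bar F\circ M$, which requires confirming that left Kan extensions along $F$ commute with post-composition by regular functors in the exact setting; the related difficulty is that the image functor $N$ coincides with $\id_{\cat D}$, which relies on the uniqueness afforded by the universal property of $L$ together with the fact that $N$ is finitely continuous and sits between $L$ and $\id_{\cat D}$.
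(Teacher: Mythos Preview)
Your approach is quite different from the paper's, and the two obstacles you flag at the end are genuine gaps that I do not see how to close.

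\emph{On the identification $L\simeq\bar F\circ M$.} Your justification is that $\bar F$ preserves ``the coequalizers of pseudoequivalence relations used to compute left Kan extensions in the exact setting.'' But that pointwise description of $F_!(I)$ is the content of Lemma~\ref{LKE}, whose hypothesis is precisely that $F$ is resolvent --- the conclusion you are trying to reach. The hypothesis of the present lemma only asserts that $F_!(I)$ and $F_!(F)$ exist as left Kan extensions in the bare universal-property sense; nothing says they are pointwise or computed by any particular colimit. Without that, post-composition by a regular functor has no reason to preserve them. (It would suffice for $\bar F$ to have a right adjoint, but establishing that is Lemma~\ref{adjunct}, again under the resolvency hypothesis.)

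\emph{On $\alpha_D$ being a regular epimorphism.} Even granting a transformation $\alpha:L\to\id_{\cat D}$ with $\alpha_{FC}$ invertible, the image-functor argument breaks down. First, the pointwise image $N$ of $\alpha$ need not preserve equalizers --- images do not commute with equalizers in a regular category --- so $N$ need not be finitely continuous and cannot be fed back into the Kan-extension hypothesis. Second, even if $N$ were finitely continuous with $NF\simeq F$, the universal property of $L=F_!(F)$ only manufactures maps \emph{out of} $L$; it gives no mechanism to force the monomorphism $N\hookrightarrow\id_{\cat D}$ to be invertible. ``Sitting between $L$ and $\id_{\cat D}$'' is not enough.

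The paper avoids both problems with a single stroke: it applies the hypothesis not to $\cat C\exlex$ or to $\cat D$ but to the purpose-built exact category $(F\downepi\cat D)$ whose objects are regular epimorphisms $FX\to Y$ and whose morphisms are maps of codomains along which the covers lift. The tautological functor $G:\cat C\to(F\downepi\cat D)$, $X\mapsto\id_{FX}$, is finitely continuous, and the value $F_!(G)(D)$ is itself a regular epimorphism from something in the image of $F$; functoriality of $F_!(G)$ applied to any $f:FC\to D$ then directly produces the required factorisation. The trick is to encode ``resolution of $D$'' as an object of the target category, so that the bare existence of the Kan extension already delivers it, with no need to compare Kan extensions across different targets or to control images of natural transformations.
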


\newcommand\cod{\mathrm{cod}}
\begin{proof} Define the category $(F\downepi \cat D)$ as follows. The objects are regular epimorphisms $e:FX \to Y$. A morphism $e\to e'$ is a morphism $f:\cod e \to \cod e'$ such that $f\circ e$ factors through $e'$.

The category $(F\downepi \cat D)$ is exact. Let $G:\cat C\to (F\downepi \cat D)$ be the functor that maps $X$ to the trivial cover $\id_FX:FX\to FX$. For each object $X$ of $\cat D$, $F_!(G)X$ is a cover $FX_0 \to X$. If $f:FY\to X$ is any morphism, then $F_!(G)f: F_!(G)FY\simeq GY \to F_!(G)X$ forces $f$ to factor through $F_!(G)X$. Hence $F_!(G)(X)$ is a resolution. By generalization $F$ is a resolvent functor.
\end{proof} 

So in the specific case of finitely continuous embeddings into exact categories, resolvency is equivalent to the property of admitting left Kan extensions.

\subsection{Ex/reg completions}
There is an important connection between resolvency and the ex/lex completion. Let $\cat D$ be exact and let $F:\cat C \to \cat D$ be a finitely complete resolvent embedding. Since $F$ is finitely complete, there is an up-to-isomorphism-unique regular functor $K:\cat C\exlex \to \cat D$ such that $KI\simeq F$. Because $F$ is resolvent, the left Kan extension $H=F_!(I):\cat D\to\cat C\exlex$ also exists. The functors $K$ and $H$ are connected in the following way.

\begin{lemma} There is an adjunction $K\dashv H$. Moreover $H$ is fully faithful. \label{adjunct} \end{lemma}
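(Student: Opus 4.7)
The plan is to construct the unit and counit of $K \dashv H$ directly and verify the triangle identities; full faithfulness of $H$ then follows from the counit being an isomorphism. The counit exploits that $K$, being a regular functor between exact categories, preserves coequalizers of pseudoequivalence relations (it preserves finite limits, covers, and the regular-epi--mono factorisation, so it sends the image equivalence relation of $(Id_0,Id_1)$ to that of $(Fd_0,Fd_1)$). By construction of $H=F_!(I)$, for each $B\in\cat D$ the chosen resolution $c_B:FB_0\to B$ and cover $FB_1$ of its kernel pair furnish a pseudoequivalence $(d_0,d_1)$ in $\cat C$ with $HB=IB_0/IB_1$, so
\[ KHB \;=\; K(IB_0/IB_1) \;\cong\; KIB_0/KIB_1 \;\cong\; FB_0/FB_1 \;\cong\; B, \]
giving $\epsilon_B$ as an iso. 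Naturality follows from the earlier observation that every $f:B\to B'$ lifts to a morphism of pseudoequivalences, which simultaneously presents $Hf$ and intertwines the identifications above with $f$.

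For the unit, Lemma \ref{LKE} supplies a natural iso $\alpha:I\to HF$; combined with the iso $\beta:KI\to F$ this gives an iso $\eta_{IC} := H\beta_C^{-1}\circ\alpha_C : IC\to HKIC$ for every $C\in\cat C$. Every $A\in\cat C\exlex$ admits a presentation $A=IC_0/IC_1$ from a pseudoequivalence $(d_0,d_1)$ in $\cat C$ with quotient $q:IC_0\to A$; naturality of $\alpha$ and $\beta$ at $d_i$ implies $\eta_{IC_0}\circ Id_i = HK(Id_i)\circ\eta_{IC_1}$, and since $q\circ Id_0=q\circ Id_1$, the composite $HK(q)\circ\eta_{IC_0}$ coequalises $Id_0,Id_1$ and factors uniquely through $q$ to yield $\eta_A:A\to HKA$. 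Independence from the presentation and naturality in $A$ are then verified by comparing two presentations through a morphism of pseudoequivalences.

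The triangle identities reduce to the case $A=IC$, $B=FC$, where they hold by the very construction of $\eta_{IC}$ and $\epsilon_{FC}$ out of $\alpha$ and $\beta$; the general case follows because the quotients $q$ and resolutions $c_B$ are epic. The main technical obstacle is bookkeeping: the isomorphisms $\alpha$, $\beta$, and $FB_0/FB_1\cong B$ must be tracked consistently through every diagram, and one has to confirm independence of the various choices (resolutions, covers of kernel pairs, pseudoequivalence presentations). Once $K\dashv H$ is established, full faithfulness of $H$ follows from the standard criterion that the right adjoint of an adjunction is fully faithful precisely when its counit is an iso, which we have shown.
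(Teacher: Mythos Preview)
Your proposal is correct and follows essentially the same route as the paper: exhibit a natural isomorphism $KH\simeq \id_{\cat D}$ as counit, extend the isomorphism $I\simeq HKI$ to a unit $\id_{\cat C\exlex}\to HK$, and conclude full faithfulness of $H$ from the counit being invertible.

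The only difference is packaging. For the counit the paper invokes the preceding lemma (the counit $F_!(GF)\to G$ is an isomorphism iff $G$ sends resolutions to regular epimorphisms) twice---once with $G=\id_{\cat D}$ to get $F_!(F)\simeq\id_{\cat D}$, and once with $G=KH$ (regular, hence resolution-preserving, with $KHF\simeq F$) to get $KH\simeq F_!(F)$---rather than computing $K(IB_0/IB_1)\cong FB_0/FB_1\cong B$ by hand. For the unit the paper obtains $\id_{\cat C\exlex}\to HK$ as the transpose of $I\to HKI$ through the universal property $I_!(I)\simeq\id_{\cat C\exlex}$, which is exactly your extension-by-presentations argument expressed abstractly. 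The paper's phrasing sidesteps the choice-independence bookkeeping you flag, at the cost of leaning on the earlier Kan-extension lemmas; your version makes the mechanism explicit.
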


\begin{proof} \hide{Apply the adjoint functor theorem to $K$!  Can't! $K$ is not fully faithful! }
Note that lemma \ref{LKE} implies $HF\simeq I$. 
The functor $\id_{\cat C\exreg}$ sends resolutions to regular epimorphisms because resolutions are regular epimorphisms. For that reason, $F_!(F)\simeq \id_{\cat D}$. 
Since $K$ is regular, $KH$ is also a functor that sends resolutions to regular epimorphisms. Because $KHJ\simeq KI\simeq J$, $J_!(J) \simeq J_!(KHJ)\simeq KH$ and $KH\simeq \id_{\cat C\exreg}$. Conclusion: there is a canonical isomorphism that serves as co-unit $KH \to \id_{\cat C\exreg}$.

In the other direction, $HK$ does not preserve resolutions, but there is still an isomorphism $I \to HKI$ with a transpose $I_!(I)\simeq \id_{\cat C\exlex} \to HK$ to serve as unit of the adjunction.
These natural transformations are the co-unit and unit of and adjunction $K\dashv H$. The functor $H$ is fully faithful because the co-unit of the adjunction is an isomorphism.
\end{proof}

The following equivalence wraps this subsection up.

\begin{theorem} Let $\cat D$ be exact and let $F:\cat C\to\cat D$ be a finitely complete embedding. The following are equivalent:
\begin{enumerate}
\item the embedding $F$ is resolvent;
\item the canonical functor $K:\cat C\exlex \to \cat D$ has a right adjoint right inverse;
\item for each exact $\cat D$, every finitely continuous functor $G:\cat C \to \cat D$ has a left Kan extension $F_!(G)$ which satisfies $F_!(G)F\simeq G$.
\end{enumerate}\label{ThA}
\end{theorem}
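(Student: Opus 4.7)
The plan is to assemble $(1)\Leftrightarrow(3)$ and $(1)\Rightarrow(2)$ from the three lemmas already proved in this section, and then to supply the remaining direction $(2)\Rightarrow(1)$ by hand. Lemma \ref{LKE} furnishes $(1)\Rightarrow(3)$: a resolvent $F$ admits, for each finitely continuous $G:\cat C\to\cat E$, a left Kan extension whose unit $G\to F_!(G)F$ is an isomorphism. Lemma \ref{converse} is exactly $(3)\Rightarrow(1)$. Lemma \ref{adjunct} gives $(1)\Rightarrow(2)$ by exhibiting $H=F_!(I)$ as a right adjoint of $K$ with $KH\simeq\id_{\cat D}$.

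For $(2)\Rightarrow(1)$ I would transport a projective cover across the adjunction. Assume $K\dashv R$ with a chosen isomorphism $KR\simeq\id_{\cat D}$ and fix $X\in\cat D$. Apply Lemma \ref{proj} in $\cat C\exlex$ to obtain a cover $c_{RX}:I(RX)_0\to RX$, write $X_0=(RX)_0$, and define the candidate resolution $c_X:FX_0\to X$ as the composite $FX_0\simeq KIX_0\xrightarrow{Kc_{RX}}KRX\simeq X$, using $KI\simeq F$ together with the given isomorphism $KR\simeq\id_{\cat D}$. Since $K$ is regular and $\cat D$ is exact, $c_X$ is a regular epimorphism and hence the coequalizer of its own kernel pair; this verifies clause~(1) of Definition \ref{resolve}.

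For the factorization clause, given $f:FC\to X$, transpose along $K\dashv R$ to $\tilde f:IC\to RX$. If $IC$ is projective with respect to covers in $\cat C\exlex$, then $\tilde f$ lifts through $c_{RX}$ to a morphism $IC\to IX_0$, fullness of $I$ writes this lift as $If_0$ for a unique $f_0:C\to X_0$, and transposing back yields $f=c_X\circ Ff_0$. The projectivity of $IC$ is an auxiliary fact derivable from Lemma \ref{proj}: pull $c_{RX}$ back along $\tilde f$; pullback-stability of covers in the regular category $\cat C\exlex$ makes the result a cover into $IC$, and the second clause of Lemma \ref{proj} makes it split, producing the required lift.

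The main obstacle I anticipate is notational rather than conceptual. The definition of $c_X$ threads through two mediating isomorphisms, $KI\simeq F$ and $KR\simeq\id_{\cat D}$, so one must be careful that the factorization equality $f=c_X\circ Ff_0$ asserted by a resolution holds up to the same mediating isomorphisms. The cleanest way to manage this is to bundle the isomorphisms into the definition of $c_X$ at the outset and then to verify the factorization by chasing naturality squares for the unit and counit of $K\dashv R$; no additional coherence argument should be required.
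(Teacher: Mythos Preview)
Your proposal is correct and follows essentially the same route as the paper: the implications $1\Rightarrow 3$, $3\Rightarrow 1$, and $1\Rightarrow 2$ are handled by Lemmas \ref{LKE}, \ref{converse}, and \ref{adjunct} respectively, and for $2\Rightarrow 1$ both you and the paper take a cover $c_{RX}:IX_0\to RX$ in $\cat C\exlex$, transpose it across $K\dashv R$ to obtain the candidate resolution $FX_0\to X$, and lift an arbitrary $f:FC\to X$ by transposing to $IC\to RX$ and factoring through $c_{RX}$. The only cosmetic difference is that the paper invokes the resolvency of $I:\cat C\to\cat C\exlex$ (the example following Definition \ref{resolve}) directly to get the lift, whereas you unpack this as projectivity of $IC$ via the second clause of Lemma \ref{proj}; these are the same fact.
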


\begin{proof} Lemma \ref{LKE} gives $1\to 3$. Lemma \ref{converse} gives $3\to 1$. Lemma \ref{adjunct} gives $1$ and $3\to 2$. This leaves $2\to 1$.

If $H$ is a right adjoint right inverse of the canonical functor $K:\cat C\exlex \to \cat C\exlex$, consider for each object $X$ of $\cat D$ a resolution $c_X:IX_0 \to HX$. The resolution $c_X$ has a transpose $c_X^t:KIX_0 \simeq FX_0 \to X$. Each $f:FY\simeq KIX \to X$ in $\cat D$ has a transpose $f^t: IX \to HX$, which factors through $c_X$. Hence $f$ factors through $c_X^t$. By generalization every object has a resolution and $F$ is resolvent.
\end{proof}

The following section considers the special case that $F$ is the ex/reg completion $J:\cat C\to\cat C\exreg$ and that $\cat C\exreg$ is a topos.

\section{Ex/reg toposes}
The ex/reg completion $\cat C\exreg$ of a locally Cartesian closed regular category $\cat C$ is a topos if the category has a generic monomorphism, i.e.\ a monomorphism of which every monomorphism is a pullback \citep{Menni00exactcompletions}. This section generalizes this result to regular categories that are only weakly Cartesian closed and proves that the generic monomorphism implies that $J:\cat C\to\cat C\exreg$ is resolvent. This leads to another equivalence (theorem \ref{ThB}): $\cat C\exreg$ is a topos and $\cat C \to \cat C\exreg$ is resolvent in and only if $\cat C$ has weak dependent products and a generic monomorphism.

\subsection{Weak dependent products}
This subsection shows that when $J:\cat C \to \cat C\exreg$ is resolvent, $\cat C\exreg$ is locally Cartesian closed if and only if $\cat C$ has weak dependent products.

\newcommand\weakprod{\widetilde{\Pi}}
\begin{defin} Let $\cat C$ be an arbitrary finitely complete category. For each $f:X\to Y$ in $\cat C$, pullbacks along $f$ determine the \emph{reindexing functor} $f^*:\cat C/Y\to\cat C/X$ up to unique isomorphism. If this functor has a right adjoint $\prod_f:\cat C/X\to\cat C/Y$ for each $f$, then $\cat C$ is \emph{locally Cartesian closed}. 

For each $A$ of $\cat C/X$, the object $\prod_f(A)$ of $\cat C/Y$ is the \emph{dependent product} of $A$ along $f$. The co-unit $\epsilon_A: f^*(\prod_f(A))\to A$ has the property that for each $g:f^*(B) \to A$ there is a unique $g^t:B\to \prod_f(A)$ such that $\epsilon_A\circ f^*(g^t) = g$.

A \emph{weak dependent product} of $A$ along $f$ is an object $\weakprod_f(A)$ together with a morphism $e:f^*(\weakprod_f(A))\to A$ such that for each morphism $g:f^*(B)\to A$, there are $g':B\to \weakprod_f(A)$ such that $e\circ f^*(g') = g$. When $\cat C$ has weak dependent products for each $A$ along each $f$, $\cat C$ is \emph{weakly locally Cartesian closed}.
\end{defin}

The exact completions add quotients, which helps to construct dependent products from weakly dependent products. Let $\cat C$ be a finitely complete category.

\begin{lemma} The category $\cat C$ is weakly locally Cartesian closed if and only if $\cat C\exlex$ is locally Cartesian closed. \end{lemma}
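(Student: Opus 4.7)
The plan is to reduce both directions to considering morphisms in the image of $I:\cat C\to\cat C\exlex$, using lemma \ref{proj}: every object of $\cat C\exlex$ admits a cover $IX_0\twoheadrightarrow X$ from $\cat C$, and every regular epimorphism onto some $IY_0$ is split. Since $I$ is resolvent, morphisms of $\cat C\exlex$ are represented by morphisms of pseudoequivalence relations in $\cat C$, and this representation is exactly what interchanges the weak and strict versions of the universal property.

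For the direction ``$\cat C$ weakly locally Cartesian closed $\Rightarrow$ $\cat C\exlex$ locally Cartesian closed'', given $f:X\to Y$ in $\cat C\exlex$ I would first cover $Y$ by some $q:IY_0\twoheadrightarrow Y$ and cover $X\times_Y IY_0$ by some $p:IX_0\twoheadrightarrow X\times_Y IY_0$, lifting $f$ to a morphism $f_0:X_0\to Y_0$ of $\cat C$ that represents it modulo the kernel-pair pseudoequivalence relations of $p$ and $q$. Because reindexing respects pullback pasting, constructing $\prod_f$ reduces to building $\prod_{If_0}:\cat C\exlex/IX_0\to\cat C\exlex/IY_0$ and then descending along these congruences. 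For an object $A\to IX_0$, cover it by some $IA_0\twoheadrightarrow A$ and form the weak dependent product $\weakprod_{f_0}(A_0)$ in $\cat C$. Its image in $\cat C\exlex$ is only weakly universal, so the strict $\prod_{If_0}(A)$ is obtained by quotienting $I\weakprod_{f_0}(A_0)$ by the pseudoequivalence relation that identifies two generalized sections whenever they induce the same section of $(If_0)^*A\to IX_0$; this quotient exists because $\cat C\exlex$ is exact.

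For the converse, suppose $\cat C\exlex$ is locally Cartesian closed. Given $a:A\to X$ and $f:X\to Y$ in $\cat C$, form the strict dependent product $P=\prod_{If}(Ia)$ in $\cat C\exlex/IY$, cover its domain by some $c:IZ\twoheadrightarrow P$ with structure map $\pi:Z\to Y$ in $\cat C$, and let $\epsilon:(If)^*P\to Ia$ be the counit. Pulling back $c$ yields a cover $(If)^*(Ic):I(f^*\pi)\twoheadrightarrow(If)^*P$, and composition with $\epsilon$ gives a morphism $I(f^*\pi)\to Ia$ over $IX$; by fullness of $I$ this comes from a morphism $e:f^*\pi\to a$ in $\cat C/X$. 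I would then verify that $(\pi,e)$ is a weak dependent product of $a$ along $f$: for any $g:f^*B\to A$ in $\cat C$, apply $I$, transpose to a morphism $IB\to P$ in $\cat C\exlex/IY$, split through the cover $c$ to obtain a lift $IB\to IZ$, and descend via fullness of $I$ to the required $B\to Z$ over $Y$; commutativity of the triangle follows by applying $I$ and comparing with the counit.

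The main obstacle in both directions is the mismatch between strict and weak universality at the boundary of $\cat C$ and $\cat C\exlex$. In the forward direction it lies in checking that the quotient of $I\weakprod_{f_0}(A_0)$ really is strictly universal, which requires identifying the correct pseudoequivalence relation and using projectivity of $IX_0$ to compare sections elementwise. In the backward direction it lies in ensuring that the lift $IB\to IZ$ descends to a morphism in $\cat C$; here one relies on the fact that a weak dependent product demands only existence and not uniqueness of the transpose, so the ambiguity introduced by splitting through $c$ causes no harm. Both obstacles are resolved by the combination of exactness of $\cat C\exlex$ and the splitting-on-$I$-projectives part of lemma \ref{proj}.
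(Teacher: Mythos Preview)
The paper does not actually prove this lemma; it simply cites \citet{MR1787592} (Carboni--Rosolini, \emph{Locally cartesian closed exact completions}). So there is no proof in the paper to compare against, only a reference.

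Your sketch is along the lines of the cited argument and the backward direction is essentially complete and correct: form $\prod_{If}(Ia)$ in $\cat C\exlex$, cover it from $\cat C$ using lemma~\ref{proj}, and use projectivity of $IB$ together with full faithfulness of $I$ to produce the (non-unique) transpose in $\cat C$. This is exactly how Carboni--Rosolini handle that implication.

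The forward direction has the right shape but hides nontrivial work behind the phrases ``reduces to building $\prod_{If_0}$'' and ``descending along these congruences''. Reducing $\prod_f$ to $\prod_{If_0}$ along a cover $IY_0\twoheadrightarrow Y$ is not automatic: you must show that the candidate right adjoint built over $IY_0$ is compatible with the kernel pair of the cover, so that it descends to a functor on $\cat C\exlex/Y$, and then separately that the descended object still has the strict universal property. Carboni--Rosolini avoid this two-step reduction by working directly with pseudoequivalence relations: given $f$ and $A$ in $\cat C\exlex$, they represent everything by pseudoequivalence relations in $\cat C$, build a weak dependent product of the representing data, and exhibit an explicit pseudoequivalence relation on it whose quotient is the strict $\prod_f(A)$. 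Your quotienting step (``identify two generalized sections whenever they induce the same section'') is morally the same construction, but carried out after a preliminary reduction that itself needs justification. Either route works; the cited paper's is more direct.
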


\begin{proof} See \citet{MR1787592}. \end{proof}

\begin{prop} 
If $\cat C$ is regular and $J:\cat C\to\cat C\exreg$ is resolvent, then $\cat C$ is weakly locally Cartesian closed if and only if $\cat C\exreg$ is locally Cartesian closed. \label{lccc}
\end{prop}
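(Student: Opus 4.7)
My plan is to combine the preceding lemma, which identifies weak local Cartesian closure of $\cat C$ with local Cartesian closure of $\cat C\exlex$, with Theorem \ref{ThA}: since $J$ is resolvent, the canonical functor $K$ has a fully faithful right adjoint $H:\cat C\exreg\to\cat C\exlex$ with $KH\simeq\id$, and $K$ is left exact as it is regular. Thus $\cat C\exreg$ sits inside $\cat C\exlex$ as a reflective subcategory with left exact reflector, and the proposition reduces to showing that $\cat C\exlex$ is locally Cartesian closed if and only if $\cat C\exreg$ is.

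For the direction $(\Rightarrow)$ I would slice the adjunction $K\dashv H$ over each $Y\in\cat C\exreg$: let $K_Y(X\to HY):=(KX\to Y)$ and $H_Y(A\to Y):=(HA\to HY)$. Since pullbacks in slices are pullbacks in the ambient category, left exactness of $K$ transfers to $K_Y$, and the counit remains an isomorphism; hence $\cat C\exreg/Y$ is reflective in $\cat C\exlex/HY$ with left exact reflector. Cartesian closure of $\cat C\exlex/HY$ plus the fact that $K_Y$ preserves finite products makes $\cat C\exreg/Y$ an exponential ideal, hence Cartesian closed; varying $Y$ yields local Cartesian closure of $\cat C\exreg$.

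For the direction $(\Leftarrow)$ I would build weak dependent products in $\cat C$ directly. Given $f:X\to Y$ and $a:A\to X$ in $\cat C$, form $P:=\prod_{Jf}(JA)$ in $\cat C\exreg/JY$ with counit $\epsilon:Jf^*P\to JA$ and pick a resolution $c:JP_0\to P$. Left exactness of $J$ identifies $Jf^*(JP_0)$ with $J(f^*P_0)$, so $\epsilon\circ Jf^*(c)$ is a morphism $J(f^*P_0)\to JA$ in the image of $J$; fullness lifts it to a unique $e:f^*P_0\to A$ in $\cat C$. To verify weak universality, take $g:f^*B\to A$ in $\cat C$, transpose $Jg$ to $(Jg)^t:JB\to P$, and use resolvency to factor it as $c\circ Jh$ for some $h:B\to P_0$; then
\[ Je\circ Jf^*(Jh) = \epsilon\circ Jf^*(c)\circ Jf^*(Jh) = \epsilon\circ Jf^*((Jg)^t) = Jg, \]
and faithfulness of $J$ gives $e\circ f^*h = g$, so $(P_0,e)$ is a weak dependent product.

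The main obstacle is the care needed in $(\Rightarrow)$ to pin down the slice reflection and confirm that $\cat C\exreg/Y$ really is an exponential ideal of $\cat C\exlex/HY$—essentially one must check that $K_Y$ preserves products over $HY$, which reduces to $K$ preserving pullbacks into $HY$ and so follows from left exactness of $K$. The computations in $(\Leftarrow)$ are essentially forced by resolvency together with fullness and faithfulness of $J$, the only further ingredient being the hypothesised adjunction $Jf^*\dashv\prod_{Jf}$ in $\cat C\exreg$.
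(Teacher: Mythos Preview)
Your proposal is correct and follows essentially the same route as the paper: the paper also invokes the reflective inclusion of $\cat C\exreg$ in $\cat C\exlex$ (via Theorem~\ref{ThA}) to inherit local Cartesian closure for the $(\Rightarrow)$ direction, and for $(\Leftarrow)$ also resolves $\prod_{Jf}(JA)$ and pulls $\epsilon$ back through the resolution using fullness of $J$. You have simply spelled out the details that the paper leaves implicit---in particular the exponential-ideal argument in the slice and the explicit verification of weak universality via the transpose and faithfulness of $J$.
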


\begin{proof} Let $\cat C$ be regular and let $J:\cat C\to\cat C\exreg$ be resolvent. Because $J$ is resolvent, $\cat C\exreg$ is a reflective subcategory of $\cat C\exlex$. If $\cat C$ is weakly locally Cartesian closed, then $\cat C\exreg$ inherits local Cartesian closure from $\cat C\exlex$. Next let $\cat C\exreg$ be locally Cartesian closed. For $f:X\to Y$ and $A\in \cat C/X$, there is a dependent product $\prod_{Jf}(JA)$ of $JA$ in $\cat C\exreg/JX$ along $Jf$. This dependent product has a resolution $h:J(\weakprod_f(A)) \to \prod_{Jf}(JA)$. Also, there is an $e:f^*(\weakprod_f(A)) \to A$, because $\epsilon_{JA}:(Jf)^*(\prod_{Jf}(JA)) \to JA$ composed with $Jf^*h$ is in the image of the fully faithful $J$. These form a weak dependent product. \end{proof}

\newcommand\N{\mathbb N}

\begin{example} The category of families $\Fam(\cat C)$ over any small finitely complete category $\cat C$--see example \ref{famex} for the definition--is weakly locally Cartesian closed, because $\Fam(\cat C)\exlex$ is equivalent to the topos of presheaves $\Set^{\cat C\dual}$. Let $\Set^\to$ be the arrow category of the category of sets, and let $\cat C$ be the small subcategory of $\Set^\to$ whose objects are functions between subsets of the set of natural numbers $\N$.  The exponential of $!:2\to 1$ and $\id_\N:\N\to\N$ (in $\Set^\to$) is $!:2^\N \to 1$, which is neither countable, nor a coproduct of countable morphisms. It therefore cannot exist in $\Fam(\cat C)$, so $\Fam(\cat C)$ is not Cartesian closed, let alone locally Cartesian closed. Note that $\Fam(\cat C)$ is regular and that $\Fam(\cat C)\exreg$ is a topos too. \end{example}

\subsection{Open monomorphisms}
For an object of all truth values in $\cat C\exlex$, $\cat C$ needs a generic proof \citep{MR1948025}. This paper shows what happens if we weaken this condition.

\begin{defin} A \emph{generic monomorphism} is a monomorphism of which every monomorphism is a pullback. \end{defin}

\begin{example} Like every topos, $\Set^\to$ has a generic monomorphism, but its ex/lex completion of $\Set^\to$ has not \citep[lemma 2.4]{MR2320014}. \end{example}

The subsection shows that a generic monomorphism in $\cat C$ induces a classifier of a class of monomorphism in $\cat C\exlex$, which I define with the help of the following concept.

\begin{defin}[Orthogonality] In any category $\cat C$, a morphism $e:A\to B$ is \emph{left orthogonal} to $m:C\to D$ and $m$ is \emph{right orthogonal to } $e$, if for all $f:A\to C$ and $g:B\to D$ such that $m\circ f = g\circ e$ there is a unique $h:B\to C$ such that $h\circ e = f$ and $m\circ h = g$.
\[ \xymatrix{
A \ar[r]^f\ar[d]_e & C \ar[d]^{m} \\
B \ar[r]_g \ar@{.>}[ur]|h & D
} \]
\end{defin}

If $\cat C$ is finitely complete and if $m:C\to D$ in $\cat C$ is left orthogonal to $e$, then so is the pullback of $m$ along any morphism $E\to D$. An important example of orthogonality is that regular epimorphisms are left orthogonal to all monomorphisms.

\begin{defin} A \emph{open monomorphism} in $\cat C\exlex$ is a monomorphism $m:V\to W$  which is right orthogonal to $Ie$ for every regular epimorphism $e:X\to Y$ in $\cat C$. \end{defin}

The following characterization lemma shows that open monomorphisms locally are monomorphisms in $\cat C$.

\begin{lemma} If $\cat C$ is regular, then a monomorphism $m:X\to Y$ is open if and only if pullbacks along regular epimorphisms $e:IY_0 \to Y$ are isomorphic to monomorphisms $Im_0:IX_0 \to IY_0$ in the image of $I$. \label{charclosed}\end{lemma}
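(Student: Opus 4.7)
The plan is to handle the two implications separately, and in each direction to reduce the problem to a statement about regular epimorphisms and monomorphisms inside $\cat C$ by using the existence of resolutions from Lemma \ref{proj} together with the fullness, faithfulness, and left exactness of $I$.

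For the forward direction I would assume $m:X\to Y$ is open and let $e:IY_0\to Y$ be an arbitrary regular epimorphism. Pulling $m$ back along $e$ gives a mono $m':X'\to IY_0$ which is again open, since right orthogonality is preserved under pullback in the right variable. By Lemma \ref{proj} there is a resolution $c_{X'}:IZ\to X'$, and fullness and faithfulness of $I$ identify the composite $m'\circ c_{X'}$ with $Ih$ for a unique $h:Z\to Y_0$. Factoring $h$ in the regular category $\cat C$ as $h=m_0\circ e_0$ with $m_0$ a mono and $e_0$ a regular epi, the right orthogonality of $m'$ against $Ie_0$ produces a diagonal $h':IX_0\to X'$ satisfying $h'\circ Ie_0 = c_{X'}$ and $m'\circ h' = Im_0$. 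Since $h'\circ Ie_0 = c_{X'}$ is a strong epi, $h'$ is a strong epi; since $m'\circ h' = Im_0$ is a mono, $h'$ is a mono; hence $h'$ is an isomorphism and $m'\cong Im_0$.

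For the backward direction I would pick a lifting square $m\circ f = g\circ Ie'$ with $e':A\to B$ a regular epi in $\cat C$ and fix a cover $e_Y:IY_0\to Y$; the hypothesis identifies the pullback of $m$ along $e_Y$ with $Im_0:IX_0\to IY_0$ for some mono $m_0$ in $\cat C$, with pullback projection $\bar e:IX_0\to X$. Pulling $e_Y$ back along $g:IB\to Y$ gives a cover of $IB$, which splits by Lemma \ref{proj} and so yields $g_0:IB\to IY_0$ with $e_Y\circ g_0 = g$; by fullness $g_0 = I\tilde g$ for some $\tilde g:B\to Y_0$. The pair $(f,\, I(\tilde g\circ e'))$ factors through the pullback, giving $f':IA\to IX_0$, and fullness plus faithfulness descend this to $f_0:A\to X_0$ in $\cat C$ satisfying $m_0\circ f_0 = \tilde g\circ e'$. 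The orthogonality of $e'$ against $m_0$ inside $\cat C$ then gives $h_0:B\to X_0$ with $h_0\circ e' = f_0$ and $m_0\circ h_0 = \tilde g$, and $h := \bar e\circ Ih_0$ is the desired diagonal in $\cat C\exlex$; uniqueness is immediate from $m$ being mono.

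The main obstacle is the bookkeeping for the pullback-along-$e_Y$ manoeuvre in the backward direction: one has to ensure that the given data $f$ and the constructed map $I(\tilde g\circ e')$ agree on their common codomain $Y$ so that they factor through the pullback, and this is exactly what the splitting of the cover via Lemma \ref{proj} guarantees. Once this is set up, both directions rest on nothing more than the standard orthogonality of regular epimorphisms against monomorphisms in the regular category $\cat C$, mediated by the fullness, faithfulness, and left exactness of $I$.
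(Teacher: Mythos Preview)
Your proposal is correct and follows essentially the same route as the paper: both directions reduce to the regular-epi/mono orthogonality in $\cat C$ via a cover from Lemma~\ref{proj} and the fullness, faithfulness, and left exactness of $I$. The only cosmetic difference is that in the forward direction you conclude $h'$ is an isomorphism by observing it is a strong epimorphism (as a right factor of the resolution) and a monomorphism (as a left factor of $Im_0$), whereas the paper obtains mutually inverse maps by applying orthogonality in both directions; the content is the same.
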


\newcommand\pb{\ar@{}[dr]|<\lrcorner}

\begin{proof} First assume that $m:X\to Y$ is open. Let $m':X'\to IY_0$ be the pullback of $m$ along $p_Y:IY_0 \to Y$ and let $e':IX'_0 \to X'$ be a cover of $X'$. There is an epimorphism $e_0: X'_0 \to X_0$ and a monomorphism $m_0:X_0 \to Y_0$ such that $Im_0\circ Ie_0 = m'\circ e'$. Since pullback of $m$, $m'$ is open, so $m'$ is right orthogonal to $Ie_0$. Simultaneously, $Im_0$ is right orthogonal to $e'$ because $Im_0$ is a monomorphism and $e'$ is an epimorphism. For these reasons, there is a unique isomorphism $IX_0 \to X$ which commutes with $m$ and $Im_0$.

\[\xymatrix{
IX'_0 \ar[r]^{e'}\ar[d]_{Ie_0} & X'\ar[r]\ar[d]_{m'}\pb & X \ar[d]^m \\
IX_0 \ar[r]_{Im_0} \ar@{.>}[ur]|\simeq & IY_0\ar[r]_{p_Y} & Y
}\]

Next let $m:X\to Y$ be arbitrary, assume that $Im_0: IX_0 \to IY_0$ is the pullback of $m$ along some cover $p_Y:IY_0 \to Y$ and let $p_X:IX_0 \to X$ be the pullback of $p_Y$ along $m$. Let $e:X'\to Y'$ be a regular epimorphism in $\cat C$ and let $x:IX'\to X$, $y:IY'\to Y$ be morphisms that satisfy $m\circ x = y\circ Ie'$. Since $IY'$ is projective, $e$ is a regular epimorphism and $I$ is full, there are $y_0:Y'\to Y_0$ such that $p_Y\circ Iy_0 = y$. Since $Im_0$ is the pullback of $m$ along $p_Y:IY_0\to Y$ and $I$ is fully faithful, there is a unique $x_0:X'\to X_0$ such that $p_X\circ Ix_0 = x$ and $m_0\circ x_0 = y_0 \circ e$. But $m_0$ is right orthogonal to $e_0$ which means that there is a unique $z_0:Y'\to X_0$ such that $z_0\circ e = x_0$ and $m_0\circ z_0 = y_0$. Hence $z = p_X\circ Iz_0$ satisfies $z\circ Ie = x$ and $m\circ z = y$.

\[\xymatrix{
IX'\ar[d]_{Ie} \ar@{.>}[r]_{Ix_0}\ar@/^/[rr]^x & IX_0 \ar[d]^{Im_0} \ar[r]_{p_X} \pb & X \ar[d]^m\\ 
IY'\ar@{.>}[r]^{Iy_0} \ar@/_/[rr]_{y} \ar@{.>}[ur]|{Iz_0} & IY_0\ar[r]^{p_Y} & Y
}\]

For each $z': IY'\to Y$ such that $z'\circ Ie = x$ and $m\circ z' = y$, $z'\circ Iy_0$ factors through the same $Iz_0$, so $z' = z$ and $m$ is right orthogonal to $Ie$. By generalization each monomorphism $m:X\to Y$ is open if and only if its pullback along a cover $p_Y:IY_0 \to Y$ is isomorphic to a monomorphism in the image of $I$.
\end{proof}

The next lemma connects open monomorphisms with codomain $Z$ to open morphisms whose codomains cover $Z$.

\begin{lemma}[open descent] Let $e:Y\to Z$ be a regular epimorphism and let $p,q:X\to Y$ be a kernel pair of $e$. If $m:V\to Y$ is an open monomorphism whose pullbacks $p^*(m)$, $q^*(m)$ along $p$ and $q$ are isomorphic, then there is an up to isomorphism unique open monomorphism $n:W\to Z$ such that $m = e^*(n)$.
\[ \xymatrix{
 V \ar@{.>}[r]\ar[d]_m\pb & W\ar@{.>}[d]^n\\
 Y \ar[r]_e & Z
}\]\label{opendescent}
\end{lemma}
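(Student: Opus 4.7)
The plan is to first construct $n\colon W\to Z$ as a monomorphism in $\cat C\exlex$ by subobject descent, and then verify that $n$ is open by exploiting projectivity of objects in the image of $I$.

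\emph{Construction of $n$.} Since $\cat C\exlex$ is exact, subobjects descend effectively along regular epimorphisms. I would read the hypothesis $p^*(m) \simeq q^*(m)$ as saying that the two pullback subobjects coincide inside the poset of subobjects of $X$; because subobject posets are thin, the cocycle condition on this descent datum is automatic. Concretely, take $W$ to be the image of $e\circ m\colon V\to Z$ and $n\colon W\rightarrowtail Z$ the accompanying monomorphism. The descent hypothesis forces the canonical comparison $V\to Y\times_Z W$ to be an isomorphism, yielding $m\simeq e^*(n)$, and uniqueness of $n$ up to isomorphism follows from the universal property of image factorisations.

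\emph{Openness of $n$.} Let $f\colon A_0\to B_0$ be a regular epimorphism in $\cat C$, and suppose $a\colon IA_0\to W$ and $b\colon IB_0\to Z$ satisfy $n\circ a = b\circ If$. By Lemma~\ref{proj} every cover of $IB_0$ splits; applied to the pullback of $e$ along $b$ this yields a lift $b'\colon IB_0\to Y$ with $e\circ b' = b$. The pair $(a,\,b'\circ If)$ then defines a map $\tilde a\colon IA_0\to Y\times_Z W = V$ satisfying $m\circ\tilde a = b'\circ If$. Because $m$ is open, right orthogonality to $If$ produces a unique $k\colon IB_0\to V$ with $k\circ If = \tilde a$ and $m\circ k = b'$. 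Setting $h := \pi_W\circ k$, one checks $h\circ If = \pi_W\circ\tilde a = a$ and $n\circ h = n\circ\pi_W\circ k = e\circ m\circ k = e\circ b' = b$. Uniqueness of $h$ is immediate because $n$ is a monomorphism.

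\emph{Main obstacle.} The delicate point is the descent step: one must verify that the pullback-isomorphism hypothesis genuinely furnishes effective descent data in the exact category $\cat C\exlex$ and that the resulting $n$ is pinned down up to unique isomorphism. Once $n$ exists, the openness verification proceeds cleanly: projectivity of $IB_0$ reduces any test square for $n$ to a test square for $m$, and openness of $m$ does the rest.
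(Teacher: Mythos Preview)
Your proposal is correct. The construction of $n$ via the image factorisation of $e\circ m$ is exactly what the paper does; you phrase the verification that $m\simeq e^*(n)$ in terms of effective descent of subobjects in an exact category, whereas the paper spells out the same content by hand using the kernel pair (pulling $m$ back along $q$, using the hypothesis to identify this with the pullback along $p$, and then reading off that the regular-epi/mono factorisation of $e\circ m$ is stable under the pullback along $e$). Your observation that the cocycle condition is automatic in a poset is precisely what makes the abstract descent argument go through, so there is no real gap here.

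For openness of $n$ you take a slightly different route. The paper invokes Lemma~\ref{charclosed}: it lifts a cover $c_Z\colon IZ_0\to Z$ through $e$ using projectivity of $IZ_0$, observes that $c_Z^*(n)$ is then a pullback of $m$ and hence lies in the image of $I$, and concludes via the characterisation. You instead verify the orthogonality condition in the definition directly, again lifting through $e$ by projectivity and then using openness of $m$ to fill the square. Both arguments hinge on the same idea---projectivity of $IB_0$ reduces questions about $n$ to questions about $m$---but yours is marginally more self-contained in that it does not appeal to the characterisation lemma.
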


\begin{proof} The composite $e\circ m:$ factors as a monomorphism $n:W\to Z$ following a regular epimorphism $e':V\to W$ in an up to isomorphism unique way. Let $l:U\to X$ be a pullback of $m$ along $q$. Because $l$ is also a pullback of $m$ along $p$ there is a regular epimorphism $p':U\to V$ such that $p\circ l$ factors as $m\circ p'$. The composite $p\circ l$ is the pullback of $e\circ m = n\circ e'$, but because regular epimorphisms an monomorphisms are both stable under pullback, $p'$ is a pullback of $e'$ and $m$ is a pullback of $n$ as required. 

To show that $n$ is open, let $c_Z:IZ_0 \to Z$ be any regular epimorphism. Since $IZ_0$ is projective, there is a $d:IZ_0 \to Y$ such that $e\circ d = c_Z$. A pullback of $n$ along $c_Z$ is a pullback of $m$ along $d$, then therefore open. By lemma \ref{charclosed}, $n$ is an open monomorphism.
\end{proof}

\hide{Vanaf hier komen subobjecten e.d. weer terug. Verderop gebruik ik lokale kleinheid. Daar komen de problemen vandaan.

Misschien kunnen we onderstaande definitie helemaal vermijden.

Herformuleer wat we willen bewijzen? Er is een subobject classfier: generiek monomorfisme waarvoor de pullbakcs uniek zijn.
}


\newcommand\exeq{\equiv}
\hide{
\newcommand\cl{\mathcal O}
\begin{defin} For each object $X$ of $\cat C\exlex$ a \emph{open subobject} is an isomorphism class of open monomorphisms to $X$. Let $\cl(X)$ be the \emph{set} of open subobjects. Note that $\cl$ is a functor $\cat C\exlex\dual \to \Set$, because open monomorphisms are stable under pullback. \end{defin}

Let $\cat C$ be a regular category with a generic monomorphism $\gamma:E\to P$. I show next that $\cat C\exlex$ has an object $\Omega$ that represents $\cl$: the .
\begin{defin} Let $\gamma:E\to P$ be a generic monomorphism in $\cat C$. Because $\cat C\exlex$ is locally Cartesian closed, the inverse image maps $f\inv:\sub(Y)\to \sub(x)$ of each $f:X\to Y$ in $\cat C\exlex$ has a right adjoint $\forall_f$ which satisfies the Beck-Chevalley condition. Thank to these right adjoints, the following congruence on $IP$ exists in $\cat C\exlex$.
\[ (p\exeq q) \iff (p\in \db{I\gamma} \leftrightarrow q\in \db{I\gamma}) \]
Here $\db{I\gamma}$ is the subobject containing all monics isomorphic to $I\gamma$. Let $\Omega = IP/\mathord\exeq$ and let $c_\Omega$ be the quotient map $IP \to \Omega$.
\end{defin}
}


Let $\cat C$ be a regular category with a generic monomorphism $\gamma:E\to P$. The rest of this subsection shows that $\cat C\exlex$ has an \emph{open-subobject classifier}, i.e. a generic monomorphism $t:1\to\Omega$ such that every open monomorphism $X\to Y$ is the pullback of $t$ along a unique morphism $Y\to \Omega$.

\begin{defin} Let $\gamma:E\to P$ be a generic monomorphism in $\cat C$. Because $\cat C\exlex$ is locally Cartesian closed and exact, there is a congruence $\mathord\exeq\subseteq IP\times IP$ such that $(f,g):X\to IP\times IP$ factors through $\exeq$ if and only if pullbacks $f^*(I\gamma)$ of $I\gamma$ along $f$ are pullbacks of $I\gamma$ along $g$. The congruence $\exeq$ is the groupoid of fibers of $I\gamma$ and their isomorphisms. Let $\Omega = IP/\mathord\exeq$ and let $c_\Omega$ be the quotient map $IP \to \Omega$.
\end{defin}

\hide{
\begin{lemma} There is a natural isomorphism $\cat C\exlex (I-,\Omega) \to \cl(I-)$.\label{iso1} \end{lemma}

\begin{proof} The relation $\exeq$ is defined in such a way, that $c_\Omega\circ If = c_\Omega\circ Ig$ if and only if $f\inv(\db{I \gamma}) = g\inv(\db{I \gamma})$ for each pair $f,g:X\to P$. Meanwhile, $I:\sub(X) \to \cl(IX)$ is an isomorphism by lemma \ref{charclosed}. 
The map $\cat C\exlex (IX,\Omega) \to \cl(IX)$ takes a morphism $f:IX\to \Omega$, factors it as $c_\Omega\circ If_0$ and then pulls $I\gamma$ back along $If_0$. This is a well defined and injective map, because every factorization of $f$ through $c_\Omega$ gives the same open subobject of $IX$. It is surjective because $\gamma$ is a generic monomorphism.

If $g:X\to Y$, then $\cl(Ig):\cl(IY) \to \cl(IX)$ commutes with $\cat C\exlex(Ig,\Omega)$ because if $h = c_\Omega\circ Ih_0:Y\to \Omega$ then $h\circ g = c_\Omega\circ Ih_0\circ g$. 
\end{proof}
}

\begin{lemma} There is an open-subobject classifier $t:1\to \Omega$. \label{iso3} \end{lemma}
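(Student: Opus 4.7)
The plan is to construct $t$ by pushing a canonical point of $P$ through the quotient $c_\Omega$. Since $\gamma:E\to P$ is generic, the monomorphism $\id_1:1\to 1$ must be a pullback of $\gamma$, which yields a point $p_0:1\to P$ with $p_0^*\gamma\cong\id_1$. I would set $t := c_\Omega\circ Ip_0$; it is automatically a monomorphism because its domain is terminal. The crucial first step is to verify that $c_\Omega^* t\cong I\gamma$: for any $f:X\to IP$, $f$ factors through $c_\Omega^* t$ iff $c_\Omega f = c_\Omega Ip_0$, iff $(f,Ip_0)$ factors through $\exeq$ (by construction of $c_\Omega$), iff $f^*(I\gamma)\cong (Ip_0)^*(I\gamma)\cong I(p_0^*\gamma)\cong\id_X$, iff $f$ factors through $I\gamma$. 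Since $c_\Omega$ is a cover from $IP$ and $I\gamma$ lies in the image of $I$, Lemma \ref{charclosed} then confirms that $t$ itself is open.

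To produce a classifying map for an arbitrary open monomorphism $m:V\to W$, I would choose a resolution $c_W:IW_0\to W$; Lemma \ref{charclosed} gives $c_W^* m\cong Im_0$ for some mono $m_0:V_0\to W_0$ in $\cat C$, and genericity of $\gamma$ gives $\chi_0:W_0\to P$ with $m_0\cong\chi_0^*\gamma$. For the kernel pair $q_1,q_2:R\rightrightarrows IW_0$ of $c_W$, the two pullbacks $(I\chi_0 q_i)^*(I\gamma)\cong q_i^*(Im_0)\cong q_i^*(c_W^* m)$ coincide because $c_W q_1=c_W q_2$, so $(I\chi_0 q_1,I\chi_0 q_2)$ factors through $\exeq$. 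Hence $c_\Omega I\chi_0$ coequalizes $q_1,q_2$ and descends uniquely to $\chi:W\to\Omega$ with $\chi c_W=c_\Omega I\chi_0$. A pullback chase then gives $c_W^*(\chi^* t)=(I\chi_0)^*(c_\Omega^* t)\cong(I\chi_0)^*(I\gamma)\cong Im_0\cong c_W^* m$, and open descent (Lemma \ref{opendescent}) lifts this to $\chi^* t\cong m$ as open monomorphisms of $W$.

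For uniqueness, suppose $\chi_1,\chi_2:W\to\Omega$ both classify $m$. Pulling $c_\Omega$ back along each $\chi_i c_W:IW_0\to\Omega$ yields a regular epimorphism onto $IW_0$, which splits by Lemma \ref{proj}; composing the splitting with the projection to $IP$ and invoking fullness of $I$ produces $\psi_i:W_0\to P$ with $c_\Omega I\psi_i=\chi_i c_W$. The same pullback chase gives $I(\psi_i^*\gamma)\cong c_W^*\chi_i^* t\cong c_W^* m\cong Im_0$, so $\psi_1^*\gamma\cong\psi_2^*\gamma$ in $\cat C$ by faithfulness of $I$. By definition of $\exeq$ this forces $c_\Omega I\psi_1=c_\Omega I\psi_2$, hence $\chi_1 c_W=\chi_2 c_W$, and $\chi_1=\chi_2$ since $c_W$ is epic.

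The main obstacle I anticipate is the identification $c_\Omega^* t\cong I\gamma$: it has to be extracted directly from the definition of $\exeq$ before any descent argument gets off the ground. Once it is in hand, every other step reduces to a mechanical combination of Lemmas \ref{charclosed}, \ref{opendescent}, and \ref{proj} with resolvency, because all open monomorphisms admit a description by data in $\cat C$ that $\gamma$ is designed to classify.
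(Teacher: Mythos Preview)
Your argument is correct and follows essentially the same route as the paper: build $t$ from the quotient $c_\Omega$, identify its pullback along $c_\Omega$ with $I\gamma$, and then classify an arbitrary open $m:V\to W$ by first pulling it back to a cover $IW_0\to W$, classifying there via the genericity of $\gamma$, and descending. The only cosmetic differences are that the paper obtains $t$ as the image factorization of $c_\Omega\circ I\gamma$ (and observes the image is terminal) rather than via a chosen point $p_0:1\to P$, and that the paper splits the classification into the case $W=IX$ before covering the general case, whereas you do both at once; your explicit invocation of Lemmas~\ref{charclosed}, \ref{opendescent}, and \ref{proj} makes the descent and uniqueness steps more transparent than the paper's terse treatment.
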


\begin{proof} The object $E$ in $\cat C$ has a global section because $\id_1:1\to 1$ is a monomorphism. Meanwhile the congruence $\exeq$ has the property that $IE\times IE$ is the pullback of $I\gamma$ and either projection $\pi_0:\mathord \exeq \to IP$. The composition $c_\Omega\circ I\gamma$ factors as a monomorphism $t:S\to \Omega$ following a regular epimorphism $!:IE\to S$. This $S$ is necessarily the support of $IE$ and therefore a terminal object.

For $X$ in $\cat C$ every open morphism $m:W\to IX$ is the pullback of $t:1\to \Omega$ along a unique morphism $IX\to \Omega$. The reason is that there are morphisms $\chi:X\to P$ such that $m \simeq \chi^*(\gamma)$ and that if $\chi^*(\gamma)\simeq \psi^*(\gamma)$, then $c_\Omega\circ \chi = c_\Omega\circ \psi$. For other $X$ in $\cat C\exreg$ every open morphism $m:W\to X$ is the pullback of $t:1\to \Omega$ along a unique morphism $X\to \Omega$, because $X$ has a cover $c_X:IX_0 \to X$ and the characteristic function of the pullback of $m$ along $c_X$ factors through $c_X$.\end{proof}

\hide{
\begin{lemma} The functor $\cl:\cat C\exlex\dual \to \Set$ sends coequalizers of kernel pairs to equalizers.\label{iso2} \end{lemma}

\begin{proof} Let $e:X\to Y$ be a regular epimorphism and let $p,q:W\to X$ be a kernel pair of $e$. The functions $e\inv$, $p\inv$ and $q\inv$ all preserve open monomorphisms. Because the maps $e,p,q$ are regular epimorphisms, $\im e\circ e\inv = \id_{\sub(Y)}$, $\im p\circ p\inv = \im q\circ q\inv = \id_{\sub(X)}$. The Beck-Chevalley condition implies that $\im p\circ q\inv = e\inv \circ \im e$. If $u\in \cl(X)$ satisfies $p\inv(u) = q\inv(u)$, then $e\inv \circ \im e(u) = \im p\circ q\inv(u) = \im p\circ p\inv(u) = u$. So $u$ is the pullback of $\im e(u)$. 

The subobject $\im e(u)$ is open for the following reasons. For every cover $c_Y:IY_0 \to Y$ there is a $d:IY_0 \to X$ such that $e\circ d = c_Y$. A pullback of $\im e(u)$ along $c_Y$ is a pullbacks of $u$ along $f$. By lemma \ref{charclosed}, $\im e u$ is an open monomorphism because $u$ is.

By generalization $e\inv:\cl(Y) \to \cl(X)$ is the equalizer of $p\inv$ and $q\inv:\cl(X)\to \cl(W)$ for every coequalizer $e$ of every kernel pair $p,q$.
\end{proof}

\begin{lemma} There is a natural isomorphism $\cat C\exlex(-,\Omega) \to \cl$. For that reason there is an open-subobject classifier $t:1\to \Omega$.  \end{lemma}

\begin{proof} Since $\cl$ and $\cat C\exlex(-,\Omega)$ coincide on the full subcategory of projective objects by lemma \ref{iso1}, since $\cat C\exlex$ has enough projectives and since both treat coequalizers of kernels pairs the same way by lemma \ref{iso2}, $\cl$ and $\cat C\exlex(-,\Omega)$ coinside on all of $\cat C\exlex$.

Inverse images of $\db{I \gamma}$ along the projections $\pi_0, \pi_1: \mathord\exeq \to P$ are the same subobject, namely $\db{I \gamma}\times \db{I \gamma}\in \sub(\mathord\exeq)$. Therefore $\im{c_\Omega}(\db{I \gamma})$ is an open subobject of $\Omega$. This subobject has a global section, because $\id_1:1\to 1$ is a pullback of $\gamma:E\to P$. Because $\exeq$ identifies all members of $\db{I \gamma}$, $\im{c_\Omega}(\db{I \gamma})$ is a singleton. The subobject classifier is any morphism $t:1\to \Omega$ that represents this singleton $\im{c_\Omega}(\db{I \gamma})$.

That every open monomorphism $m:X\to Y$ is a pullback of $t$ along a unique $\chi:Y\to \Omega$ follows from the Yoneda lemma.
\end{proof}
}
The following theorem generalizes the previous lemma and turns it into an equivalence.

\begin{theorem} If $\cat C$ is regular, then $\cat C\exlex$ has a classifier of open subobjects if and only if $\cat C$ has a generic monomorphism. \label{closubclass}\end{theorem}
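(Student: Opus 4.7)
The forward direction is essentially handled by lemma \ref{iso3}: if $\gamma:E\to P$ is a generic monomorphism in $\cat C$, then the construction of $\Omega = IP/\mathord\exeq$ together with $t:1\to\Omega$ produces an open-subobject classifier. So the plan focuses on the converse: starting from an open-subobject classifier $t:1\to\Omega$ in $\cat C\exlex$, construct a generic monomorphism in $\cat C$.

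To build the candidate generic monomorphism, I would invoke lemma \ref{proj} to pick a regular epimorphism $c_\Omega:IP\to\Omega$ with $P\in\cat C$. Pulling $t$ back along $c_\Omega$ yields an open monomorphism $m:V\to IP$; by lemma \ref{charclosed}, $m$ is isomorphic to $I\gamma$ for some monomorphism $\gamma:E\to P$ in $\cat C$. The claim is that $\gamma$ is generic.

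To verify this, let $m_0:X\to Y$ be any monomorphism in $\cat C$. The first step is to check that $Im_0$ is open in $\cat C\exlex$. For this I would use lemma \ref{charclosed}: any cover $c:IY_0\to IY$ in $\cat C\exlex$ factors, by fullness of $I$, as $Iq$ for some $q:Y_0\to Y$ in $\cat C$, so the pullback of $Im_0$ along $c$ is $I(q^*m_0)$, a monomorphism in the image of $I$. Hence $Im_0$ is open. The classifier property then delivers a unique $\chi:IY\to\Omega$ with $Im_0\simeq\chi^*(t)$.

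The final step is to factor $\chi$ through $c_\Omega$. Since $IY$ is projective by lemma \ref{proj} (every regular epimorphism to $IY$ splits), $\chi$ lifts to some $\chi':IY\to IP$, and fullness of $I$ writes $\chi'=If$ for a morphism $f:Y\to P$ in $\cat C$. Then $Im_0\simeq\chi^*(t)=(If)^*(c_\Omega^*t)\simeq (If)^*(I\gamma)\simeq I(f^*\gamma)$, and full-faithfulness of $I$ forces $m_0\simeq f^*\gamma$ in $\cat C$. Thus every monomorphism in $\cat C$ is a pullback of $\gamma$, which is the required genericity. The main subtlety to guard against is the choice of lift in the projectivity step: different lifts give different morphisms $Y\to P$, but they all classify the same open subobject, which is exactly why a \emph{generic} (rather than classifying) monomorphism is what $\cat C$ must possess, matching the definition of generic monomorphism used in the paper.
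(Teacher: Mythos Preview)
Your proposal is correct and follows essentially the same approach as the paper: both cite lemma \ref{iso3} for the forward direction, and for the converse both cover $\Omega$ by some $IP$, pull back $t$ to obtain $\gamma$, and then use projectivity of $IY$ together with full faithfulness of $I$ to lift the classifying map through $c_\Omega$ and exhibit an arbitrary monomorphism of $\cat C$ as a pullback of $\gamma$. Your write-up is in fact slightly more careful than the paper's, since you explicitly invoke lemma \ref{charclosed} to justify both that the pullback of $t$ along $c_\Omega$ lies in the image of $I$ and that $Im_0$ is open, steps the paper leaves implicit.
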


\begin{proof} If $\cat C$ has a generic monomorphism then apply lemma \ref{iso3}.

Assume that $\cat C\exlex$ has an open-subobject classifier $t:1\to \Omega$. Of course $\Omega$ has a cover $c_\Omega:IP\to \Omega$ and there is a monomorphism $\gamma:E\to P$ such that $I\gamma$ is the pullback of $t:1\to \Omega$ along $q$. For every monomorphism $m:X\to Y$ in $\cat C$, there is an $f:IY \to \Omega$ such that $Im$ is the pullback of $t$ along $k$. Because $IY$ is projective and because $I$ is fully faithful, there is a $g:Y \to P$ such that $q\circ Ig = f$. Hence $m$ is the pullback of $\gamma$ along $g$. By generalization $\gamma$ is a generic monomorphism.
\end{proof}

\subsection{Monadicity}
In a topos $\cat E$ the power object functor $\pow:\cat E\to \cat E$ is monadic. If $\cat C$ is regular, has weak dependent products and is a generic monomorphism, then the open-subobject classifier $t:1\to\Omega$ in $\cat C\exlex$ induces something like a power object functor $\Omega^-: \cat C\exlex\dual \to \cat C\exlex$. This subsection singles out a full subcategory $\cat S$ of $\cat C\exreg$, such that $\Omega^-$ is a monadic functor $\cat S\dual \to\cat S$. Thanks to other properties of $\Omega$, this $\cat S$ is a topos.

\hide{Een voorbeeld van een reguliere cat met zwakke afhankelijke producten en een generiek monomorfisme die geen sterke producten heeft. Dit lijkt me lastiger.

Moeten we daar hier nog even naar verwijzen?
}

\newcommand\clopow{\Omega^}
\newcommand\unit{\eta_}
\begin{defin} For each object $A$, let $\unit A:A\to \clopow{\clopow A}$ be the unit of the adjunction $\clopow- \dashv \clopow-$. This transformation has the property that $\clopow{\unit A}\circ \unit{\clopow A} = \id_{\clopow A}$. In order to stop towers of $\Omega$, let $TA = \clopow{\clopow A}$.
An object $X$ of $\cat C\exlex$ is \emph{Stone} if $\unit X$ is the equalizer of $\unit{TX}$ and $T\unit X$. The category $\cat S$ is the full subcategory of Stone objects in $\cat C\exlex$.
\end{defin}

I use the name `Stone' because we can recover these objects from their objects of open subobjects like we can recover \emph{Stone spaces} from their lattices of open subspaces.
In order to proof a monadicity for $\Omega^-$ and Stone objects, I use a couple of properties of open monomorphisms and their classifier, which I haven't mentioned yet.

\begin{defin} An object $X$ in $\cat C\exlex$ is \emph{discrete} if the diagonal $\delta:X\to X\times X$ is an open monomorphism. \end{defin}

\begin{remark} The category of discrete objects in $\cat C\exlex$ is a \emph{q-topos} as in \citet{a2CAotTtTC}. There is a category of \emph{coarse objects} among the discrete, which is a topos. These coarse objects are precisely sheaves for the canonical topology of $\cat C$ in the terminology of \citet{MR1870615}. The coming subsections determine the full subcategory of these objects and prove that it is a topos. \end{remark}

\begin{lemma} Open monomorphisms and their classifier have the following useful properties.
\begin{enumerate}
\item For each object $X$ and each open monomorphism $m:Y\to Z$, $m^X:Y^X \to Z^X$ is open.
\item For each object $X$, $\Omega^X$ is discrete.
\end{enumerate}\label{compactdiscrete}
\end{lemma}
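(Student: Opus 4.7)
My plan is to prove part (1) using the exponential adjunction to reduce the question for $m^X$ to an orthogonality question for $m$ itself, and then to deduce (2) from (1) applied to the diagonal of $\Omega$.

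For (1), because $\cat C$ has weak dependent products, $\cat C\exlex$ is locally Cartesian closed and $(-)^X$ admits a left adjoint $-\times X$. A square testing $Ie$ against $m^X$ transposes across this adjunction to a square testing $Ie\times X$ against $m$, so it suffices to show that every open monomorphism $m:Y\to Z$ is right orthogonal to $Ie\times X:IP\times X\to IQ\times X$ for all $X$ in $\cat C\exlex$ and all regular epimorphisms $e:P\to Q$ in $\cat C$. Choose a cover $c_X:IX_0\to X$ with kernel pair $p_0,p_1:X_1\to IX_0$. Because $I$ preserves products, $Ie\times IX_0\cong I(e\times X_0)$ with $e\times X_0$ a regular epimorphism in $\cat C$ by pullback stability. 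Given a test square $(\tilde f,\tilde g)$ against $Ie\times X$ and $m$, precomposing with $\id_{IP}\times c_X$ and $\id_{IQ}\times c_X$ produces a square against $Ie\times IX_0$ and $m$, which by openness of $m$ has a unique diagonal $h_0:IQ\times IX_0\to Y$. Monicity of $m$ together with $c_X\circ p_0=c_X\circ p_1$ forces $h_0\circ(\id_{IQ}\times p_0)=h_0\circ(\id_{IQ}\times p_1)$; since $\id_{IQ}\times c_X$ is a regular epimorphism in the exact category $\cat C\exlex$ and hence the coequaliser of its kernel pair, $h_0$ descends uniquely to the required fill $h:IQ\times X\to Y$.

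For (2), observe that $(-)^X$, being a right adjoint, preserves products and diagonals, so under the canonical isomorphism $\Omega^X\times\Omega^X\cong(\Omega\times\Omega)^X$ one has $\delta_{\Omega^X}\cong(\delta_\Omega)^X$. By (1), it then suffices to show that $\delta_\Omega:\Omega\to\Omega\times\Omega$ is itself open. Given a test square with $(g_0,g_1):IB\to\Omega\times\Omega$ and $f:IA\to\Omega$ satisfying $g_0\circ Ie=g_1\circ Ie=f$, the classifying property of $\Omega$ identifies $g_0$ and $g_1$ with open subobjects $V_0,V_1$ of $IB$, which by Lemma~\ref{charclosed} correspond to subobjects $W_0,W_1$ of $B$ in $\cat C$. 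The hypothesis becomes $e^*(W_0)=e^*(W_1)$, and since pullback along a regular epimorphism in a regular category is an order-embedding on subobjects, $W_0=W_1$ and hence $g_0=g_1$, so $h:=g_0$ serves as the unique diagonal fill.

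The main obstacle is the descent step in (1): the kernel pair $X_1$ of the cover $c_X$ typically lies outside the image of $I$, so no openness orthogonality of $m$ is directly available at that level. The crucial observation is that no orthogonality beyond monicity is needed there; monicity of $m$ alone forces kernel-pair invariance of $h_0$, and exactness of $\cat C\exlex$ then delivers the descended morphism through the coequaliser property of $\id_{IQ}\times c_X$. In (2), the analogous subtlety is to recognise that the required fill is not some new morphism but simply $g_0$ (equivalently $g_1$), once regularity of $\cat C$ has been used to conclude that the two coincide.
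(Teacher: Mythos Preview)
Your proof is correct and follows essentially the same route as the paper. For (1) the paper also transposes to showing $m$ is right orthogonal to $Ie\times\id_X$, lifts against $Ie\times\id_{IX_0}\cong I(e\times\id_{X_0})$ by openness, and then descends along $\id_{IW}\times c_X$ using only that $m$ is a monomorphism and $\id_{IW}\times c_X$ is a regular epimorphism; your kernel-pair/coequaliser formulation of the descent step is just an explicit unpacking of the orthogonality of regular epimorphisms against monomorphisms. For (2) the paper likewise first proves that $\delta_\Omega$ is open by translating the classifying maps $g_0,g_1:IB\to\Omega$ into subobjects of $B$ in $\cat C$ and using injectivity of pullback along a regular epimorphism, and then applies part (1) to obtain $\delta_{\Omega^X}\cong(\delta_\Omega)^X$ open.
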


\begin{proof} For (1) let $m:Y\to Z$ be an open monomorphism, let $X$ be any object and let $c_X:IX_0 \to X$ be a regular epimorphism. Let $e:V\to W$ be a regular epimorphism of $\cat C$. The morphism $m$ is right orthogonal to $Ie\times \id_X$ because it is right orthogonal to both $Ie\times \id_{IX_0}$ for being open, and to $\id_{W}\times c_X$ for being a monomorphism. Hence $m^X$ is right orthogonal to $Ie$. By generalization $m^X$ is open whenever $m$ is.

\[\xymatrix{
IV\times IX_0 \ar[d]_{Ie\times \id} \ar[r]^{\id\times c_X} & IV\times X \ar[r] \ar[d]|{Ie\times \id} & Y\ar[d]^m && IV \ar[d]_e\ar[r] & Y^X \ar[d]^{m^X} \\
IW\times IX_0 \ar[r]_{\id\times c_X}\ar@{.>}@/^/[urr] & IW\times X \ar[r]\ar@{.>}[ur] & Z && IW\ar[r]\ar@{.>}[ur] & Z^X
}\]

For (2) let $e:X\to Y$ be a regular epimorphism in $\cat C$ and let $f:IX\to \Omega$ and $(g,h):IY\to \Omega\times \Omega$ be morphisms that satisfy $Ie\circ f = g = h$. Since $\Omega$ is the open-subobject classifier and open subobjects of $IX$ and $IY$ come from ordinary subobjects of $X$ and $Y$ in $\cat C$, $g$ and $h$ correspond to $U_g,U_h\subset Y$ such that $e^{-1}(U_g) = e^{-1}(U_h)$. Since the inverse image map of a regular epimorphism is injective, $U_g=U_h$. Therefore $g=h$. This establishes that $\delta:\Omega\to \Omega\times \Omega$ is right orthogonal to $Ie$. By generalization $\delta$ is open. By part 1, $\delta^X: \Omega^X \to \Omega^X\times \Omega^X$ is open too.
\end{proof}

The category $\cat S$ is a topos because $\clopow-$ is a monadic functor $\cat S\dual\to\cat S$. The following lemma shows that $\clopow-$ indeed lands in $\cat S$. Therefore $\clopow-$ is is right adjoint to $(\clopow-)\dual$. The lemma after the next that proves that $\clopow-$ satisfies another condition of Beck's monadicity theorem \citep[VI.7]{MR0354798}.


\begin{lemma} For each object $X$ of $\cat C\exlex$, $\Omega^X$ is Stone.\end{lemma}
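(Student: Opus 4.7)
My plan is to exhibit $\unit{\clopow X}\colon\clopow X\to T\clopow X$ as the equalizer of $\unit{T\clopow X}$ and $T\unit{\clopow X}$ by producing an explicit retraction of $\unit{\clopow X}$ and then exploiting naturality of $\unit$. To keep notation light I would set $Y=\clopow X$ and $r=\clopow{\unit X}\colon T\clopow X\to\clopow X$.

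First I would observe that the triangle identity $\clopow{\unit A}\circ\unit{\clopow A}=\id_{\clopow A}$ recorded in the definition of $\unit$, applied with $A=X$, gives $r\circ\unit Y=\id_Y$; so $\unit Y$ is a split monomorphism with retraction $r$. Naturality of $\unit$ at the arrow $\unit Y$ itself then yields $T\unit Y\circ\unit Y=\unit{TY}\circ\unit Y$, so $\unit Y$ is a fork for the parallel pair in question.

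For the universal property, let $k\colon W\to TY$ satisfy $\unit{TY}\circ k=T\unit Y\circ k$. Since $r$ retracts $\unit Y$, any factorization $\unit Y\circ h=k$ must satisfy $h=r\circ k$, which settles uniqueness. For existence I would use naturality of $\unit$ at $r$ to obtain $\unit Y\circ r=Tr\circ\unit{TY}$, and then chain $\unit Y\circ r\circ k=Tr\circ\unit{TY}\circ k=Tr\circ T\unit Y\circ k=T(r\circ\unit Y)\circ k=T(\id_Y)\circ k=k$, using in succession the equalizer hypothesis on $k$, functoriality of $T$, and the triangle identity.

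No step here poses a real obstacle. The argument is a purely formal consequence of the self-adjunction $\clopow-\dashv\clopow-$ and its triangle identity, and makes no use of exactness, resolvency, or any fact specific to $\cat C\exlex$ beyond that self-adjunction. It encodes the familiar fact that an object in the image of the right adjoint of a (self-)adjunction carries a canonical $T$-algebra structure (here $r=\clopow{\unit X}$), and that for any $T$-algebra the unit is automatically the equalizer of $\unit T$ and $T\unit$; the content of the lemma is simply that $\clopow X$ is such a $G$-image and therefore a $T$-algebra, hence Stone.
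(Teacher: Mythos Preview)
Your proof is correct and essentially the same as the paper's: both exploit only the triangle identity $\clopow{\unit A}\circ\unit{\clopow A}=\id$ and naturality of $\unit$, and your direct verification of the equalizer universal property is exactly the standard unpacking of the split-equalizer argument the paper invokes (with splittings $\clopow{\unit X}$ and $T\clopow{\unit X}$). Your concluding remark that this is just the observation that $\clopow X$, being in the image of the right adjoint, is a free $T$-algebra and hence has $\unit{\clopow X}$ as the canonical equalizer is a nice conceptual gloss on the same computation.
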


\begin{proof} The morphisms $\unit{\clopow{X}}:\clopow X\to T\clopow X$, $\unit{T \clopow X}$ and $T\unit{\clopow X}:T\clopow X \to TT\clopow X$ are a split equalizer for the following reasons. The maps $\clopow{\unit X}$ and $\clopow{\unit{TX}}$ satisfy $\clopow{\unit X}\circ \unit{\clopow X} = \id_{\clopow X}$ and $T\clopow{\unit{X}}\circ T\unit{\clopow{X}} = \id_{T\clopow X}$. Naturalness makes $\unit{T \clopow X}\circ \unit{\clopow{X}} = T\unit{\clopow X}\circ\unit{\clopow{X}}$ and $\unit{\clopow X}\circ \clopow{\unit X} = T\clopow{\unit X}\circ \unit{T\clopow X}$. Split equalizers are absolute limits and therefore $\clopow-$ sends them to split coequalizers. Hence $\clopow X$ is Stone.
\end{proof}

\begin{lemma}[Creating split coequalizers] Let $h:W\to X$ and $f,g:X\to Y$ be in $\cat S$. If $\Omega^f$ and $\Omega^g$ has a split coequalizer and $f\circ h = g\circ h$, then $\Omega^h$ is a split coequalizer of $\Omega^f$ and $\Omega^g$ if and only if $h$ is the equalizer of $f$ and $g$. \end{lemma}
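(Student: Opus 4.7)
My plan is to exploit that split coequalizers are absolute, so the contravariant functor $\clopow{-}$ sends them to split equalizers, combined with the defining property of Stone objects (the unit $\unit{X}$ is the equalizer of $\unit{TX}$ and $T\unit{X}$, hence in particular monic) and naturality of $\unit{-}$. This is essentially the creation-of-$\clopow{-}$-split-coequalizers step of Beck's monadicity theorem for the contravariant self-adjunction $\clopow{-}\dashv \clopow{-}$ restricted to $\cat S$.

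For the forward direction, I assume $\clopow{h}$ is a split coequalizer of $\clopow{f}$ and $\clopow{g}$. Absoluteness gives that $Th = \clopow{\clopow{h}}$ is a split equalizer of $Tf$ and $Tg$. Given any $k : Z \to X$ in $\cat S$ with $fk = gk$, naturality yields $Tf \cdot \unit{X} k = \unit{Y} fk = \unit{Y} gk = Tg \cdot \unit{X} k$, so $\unit{X} k$ factors uniquely as $Th \cdot j$ for a unique $j : Z \to TW$. A diagram chase applying naturality of $\unit{-}$ at both $Th$ and $h$, using the Stone identity $\unit{TX} \cdot \unit{X} = T\unit{X} \cdot \unit{X}$, and exploiting that $T$ preserves the equalizer $Th$, shows $\unit{TW} \cdot j = T\unit{W} \cdot j$. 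The Stone property of $W$ then produces a unique $k' : Z \to W$ with $\unit{W} k' = j$; from $\unit{X} h k' = Th \cdot \unit{W} k' = Th \cdot j = \unit{X} k$ and the monomorphy of $\unit{X}$ we conclude $hk' = k$, while monomorphy of $\unit{W}$ secures uniqueness of $k'$.

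For the backward direction, I let $q : \clopow{X} \to Q$ be the given split coequalizer. Since $fh = gh$, $\clopow{h}$ coequalizes $\clopow{f}$ and $\clopow{g}$, inducing a unique $\bar h : Q \to \clopow{W}$ with $\bar h q = \clopow{h}$. Applying $\clopow{-}$ makes $\clopow{q}$ a split equalizer of $Tf, Tg$; because $T$ preserves equalizers (as a composite of two contravariant right-adjoint functors) $Th$ is also an equalizer of $Tf, Tg$, forcing $\clopow{\bar h}$ to be an isomorphism by uniqueness of equalizer comparisons. A final diagram chase using monomorphy of $\unit{\clopow{W}}$ and the Stone property of $Q$ (which is Stone as a retract of the Stone object $\clopow{X}$) upgrades this to $\bar h$ being an isomorphism; transporting the splittings of $q$ through $\bar h$ then exhibits $\clopow{h}$ as a split coequalizer of $\clopow{f}$ and $\clopow{g}$.

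The hard part is the diagram chase in the forward direction establishing $\unit{TW} \cdot j = T\unit{W} \cdot j$: this is the point where the Stone property of $X$, naturality at two different morphisms, and the equalizer-preservation of $T$ must combine simultaneously, and it is what ultimately licenses the descent of $j$ along $\unit{W}$.
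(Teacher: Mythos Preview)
Your forward direction (from $\Omega^h$ being a split coequalizer to $h$ being the equalizer) is essentially the paper's argument. Both pass to $Th$ as a split equalizer by absoluteness, factor $\eta_X k$ through it, run a diagram chase to verify that the factoring map equalizes $T\eta_W$ and $\eta_{TW}$, and then descend via the Stone property of $W$ and monomorphy of $\eta_X$. Your chase (post-composing with the split mono $TTh$ and using the Stone identity for $X$) is a clean variant of the paper's, which manipulates the explicit splitting maps $h',f'$; both are valid.

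The backward direction has a genuine gap. Your assertion that $T=\Omega^{\Omega^{(-)}}$ preserves equalizers because it is ``a composite of two contravariant right-adjoint functors'' is false: in the self-adjunction $\Omega^{-}\dashv\Omega^{-}$ between $\cat C\exlex$ and $\cat C\exlex\dual$, the first factor $\Omega^{-}:\cat C\exlex\to\cat C\exlex\dual$ is the \emph{left} adjoint, so it has no reason to carry equalizers to coequalizers, and consequently $T$ has no reason to preserve equalizers. (Already in $\Set$ with $\Omega=2$: take $f=\id_2$ and $g$ the transposition on $2$; the equalizer is $\emptyset$, but $\Omega^h$ is the unique map to $1$ while the coequalizer of $\Omega^f,\Omega^g$ has three elements, so $Th$ is not the equalizer of $Tf,Tg$.) Without $Th$ being an equalizer you cannot conclude that $\Omega^{\bar h}$ is an isomorphism, and the argument becomes circular: knowing that $Th$ is an equalizer is equivalent, via the split equalizer $\Omega^q$, to knowing that $\Omega^h$ is a coequalizer, which is precisely what you are trying to establish. (Your subsequent step~4 is fine: retracts of Stone objects are Stone, and once $\Omega^{\bar h}$ is invertible the comparison of equalizer diagrams does force $\bar h$ to be invertible. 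But step~3 does not get off the ground.)

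The paper's route for this direction uses a structural fact specific to the setting which you did not invoke. Since $Y$ is Stone, the unit $\eta_Y$ embeds $Y$ into $\Omega^{\Omega^Y}$; by the earlier lemma $\Omega^{\Omega^Y}$ is discrete, so $Y$ is discrete. The equalizer $h$ is then a pullback of the \emph{open} diagonal $Y\to Y\times Y$ along $(f,g)$, hence $h$ is an open monomorphism. For an open mono $h:W\to X$, composition with $h$ (pushforward of open subobjects along $h$) gives a section of $\Omega^h$, so $\Omega^h$ is a split epimorphism; together with the assumed split coequalizer of $\Omega^f,\Omega^g$ this forces $\Omega^h$ to coincide with it. The openness of $h$ is the missing ingredient your argument needs.
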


\begin{proof} \emph{Firstly the `only if' direction:} $Y$ is a discrete object because it is a subobject of $TY=\Omega^{\Omega^Y}$. If $h$ is an equalizer of $f$ and $g$, then $h$ is open and therefore composition with $h$ on the left is a section of $\Omega^h$. So $\Omega^h$ is a split epimorphism, and $\Omega^f\circ \Omega^h=\Omega^g\circ\Omega^h$. It is easy to see that if $\Omega^f$ and $\Omega^g$ have a split coequalizer, then it must equal $\Omega^h$.

\emph{Secondly the `if' direction:} $\eta_W$ already is an equalizer of $T\eta_W$ and $\eta_{TW}: TW \to TTW$. The functor $\Omega^-$ maps split coequalizers to split equalizers, so if $\Omega^h$ is a split coequalizer of $\Omega^f$ and $\Omega^g$, then $Th$ is a split equalizer of $Tf$ and $Tg$, i.e.\ there are morphisms $h':TX \to TW$ and $f':TY\to TX$ such that $h'\circ Th = \id_{TW}$, $f'\circ Tf = \id_{TY}$ and $f'\circ Tg = Th\circ h'$.

Suppose $k: V\to X$ satisfies $f\circ k$ and $g\circ k$. Let $k'= h'\circ \eta_X\circ k$. The following diagram shows that $Th\circ k' = \eta_X\circ k$.
\hide{ Th h' \eta_X k = f' Tg \eta_X k = f' \eta_Y  g k = f' \eta_Y  f k = f' \eta_Y  f k = \eta_X\circ k}
\[ \xymatrix{
V\ar[r]^k\ar[d]_k & X\ar[r]^{\eta_X}\ar[d]|g & TX\ar[r]^{h'}\ar[d]|{Tg} & TW \ar[d]|{Th} \\
X \ar[r]|f \ar[dr]_{\eta_X} & Y \ar[r]|{\eta_Y} & TY \ar[r]|{f'} & TX \\
& TX\ar[ur]|{Tf} \ar[urr]_{\id}
}\]

Using this equality the following diagram shows that $T\eta_W \circ k' = \eta_{TW}\circ k'$.
\hide{
\underline{\id_{TTW}} T\eta_W  h' \eta_X  k = 
Th' \underline{TTh T\eta_W}  h' \eta_X  k =
Th' T\eta_X  \underline{Th  h'} \eta_X  k =
Th' T\eta_X  f' \underline{Tg  \eta_X}  k =
Th' T\eta_X  f' \eta_Y  \underline{g  k} =
Th' T\eta_X  \underline{ f' \eta_Y  f}  k =
Th' \underline{T\eta_X  \eta_X}  k =
\underline{Th' \eta_{TX}} \eta_X  k =
\eta_{TW} h' \eta_X  k
}
\[ \xymatrix{
& X \ar[r]^{\eta_X}\ar[d]|{\eta_X} & TX \ar[r]^{h'}\ar[d]|{\eta_{TX}} & TW \ar[d]^{\eta_{TW}}\\
V\ar[dr]_{k'}\ar[ur]^k & TX \ar[r]|{T\eta_X} & TTX \ar[r]|{Th'} & TTW \\
& TW\ar[r]_{T\eta_W}\ar[u]|{Th} & TTW \ar[ur]_\id \ar[u]|{TTh}
}\]
Because $\eta_W$ is the equalizer of $T\eta_W$ and $\eta_{TW}$, $h'\circ \eta_X \circ k = \eta_W\circ \alpha$ for a unique $\alpha: W\to V$. It follows that $\eta_X \circ k = \eta_X \circ h\circ \alpha$ as the following diagram shows.
\[ \xymatrix{
& X \ar[dr]^{\eta_X} \\
V \ar[r]|{k'}\ar[ur]^k\ar[dr]_\alpha &  TW\ar[r]|{Th} & TX \\
& W \ar[r]_h \ar[u]|{\eta_W} & X \ar[u]_{\eta_X}
}\]
Because $X$ is Stone $\eta_X$ is a monomorphism. Hence $k = h\circ \alpha$. Since each $k$ that satisfies $f\circ k = g\circ k$ factors uniquely through $h$ and $f\circ h = g\circ h$, $h$ must be the equalizer of $f$ and $g$.
\end{proof}

\begin{theorem}[Monadicity] The functor $\clopow-:\cat S\to\cat S$ is monadic. Therefore $\cat S$ is a topos. \end{theorem}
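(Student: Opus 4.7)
My plan is to apply Beck's monadicity theorem to the contravariant functor $\clopow{(-)} : \cat S\dual \to \cat S$ and then derive the topos structure from the resulting monadic presentation together with lemma \ref{iso3}. The left adjoint is $\clopow{(-)}$ itself: the natural bijection $\cat C\exlex(Y, \clopow X) \cong \cat C\exlex(X, \clopow Y)$, coming from cartesian closedness of $\cat C\exlex$ together with the open-subobject classifier, restricts to $\cat S$ because $\clopow X \in \cat S$ for every $X$ as established just before the present theorem. Beck's remaining conditions are then conservativity and the creation of coequalizers of $\clopow{(-)}$-split pairs.

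For conservativity, suppose $f : X \to Y$ in $\cat S$ has $\clopow f$ invertible; then $Tf = \clopow{\clopow f}$ is invertible. Setting $\beta := (Tf)^{-1} \circ \unit Y$, a short computation using naturality of $\eta$ along $f$ together with the identity $T\unit Y \circ \unit Y = \unit{TY} \circ \unit Y$ (which is naturality of $\eta$ at $\unit Y$) gives $T\unit X \circ \beta = \unit{TX} \circ \beta$. Since $X$ is Stone, $\beta$ factors uniquely through $\unit X$ as $\unit X \circ g$, and the monicity of $\unit X$ and $\unit Y$ then forces $g$ to be a two-sided inverse of $f$.

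The main obstacle is Beck's third clause: given $f, g : X \to Y$ in $\cat S$ such that $(\clopow f, \clopow g)$ has a split coequalizer in $\cat S$, the equalizer $h : W \to X$ of $f, g$ must exist in $\cat S$ and $\clopow h$ must realise that split coequalizer. I would form $h : W \to X$ as the equalizer in $\cat C\exlex$ (which has finite limits) and then verify Stoneness of $W$ by transposing the splittings of the coequalizer of $\clopow f, \clopow g$ under the self-adjunction into the morphisms $h' : TX \to TW$ and $f' : TY \to TX$ used in the `if'-direction of the preceding lemma, reusing the diagram chase appearing there to exhibit $\unit W$ as the equalizer of $T\unit W$ and $\unit{TW}$. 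That lemma then delivers both the preservation of the equalizer by $\clopow{(-)}$ and the universal property of $h$ inside $\cat S$.

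Once Beck is verified, $\cat S\dual \simeq \cat S^T$, so $\cat S$ acquires finite colimits alongside the finite limits it inherits from $\cat C\exlex$, and $\clopow{(-)}$ becomes a power-object functor on $\cat S$ whose associated classifier is the $t : 1 \to \Omega$ of lemma \ref{iso3}. By the standard characterisation of elementary toposes as finitely complete categories with power objects, $\cat S$ is a topos.
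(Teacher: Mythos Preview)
Your monadicity argument is essentially the paper's: both invoke Beck's precise monadicity theorem, relying on the preceding ``creating split coequalizers'' lemma. You are in fact more careful than the paper on two points. First, you supply an explicit conservativity argument (correct: naturality of $\eta$ reduces $T\unit X\circ\beta=\unit{TX}\circ\beta$ to $T\unit Y\circ\unit Y=\unit{TY}\circ\unit Y$, and Stoneness of $X,Y$ finishes it). Second, you notice that the paper's lemma hypothesises $W\in\cat S$, whereas Beck requires \emph{creating} the equalizer; your plan to form $h:W\to X$ in $\cat C\exlex$ and then prove $W$ Stone works: from the split equalizer $Th,h',f'$ one checks that any $\gamma:V\to TW$ equalising $T\unit W,\unit{TW}$ has $Th\circ\gamma$ equalising $T\unit X,\unit{TX}$, hence $Th\circ\gamma=\unit X\circ k$ with $fk=gk$, so $k=h\alpha$ and $\unit W\circ\alpha=\gamma$ since $Th$ is split monic.

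Where you diverge from the paper is the ``therefore $\cat S$ is a topos'' clause, and here there is a genuine gap. The paper does not extract the topos structure from monadicity; it verifies via lemma~\ref{compactdiscrete} that every object is compact and every $\Omega^X$ discrete, and then cites an external theorem (Theorem~11.18 of \cite{MR1799865}) asserting that under those hypotheses the Stone objects form a topos. Your route---monadicity yields finite colimits, $\Omega^{(-)}$ is a power-object functor, hence $\cat S$ is a topos by the power-object characterisation---needs two facts you have not established. First, that $\cat S$ is closed under finite limits in $\cat C\exlex$: the Stoneness condition is not visibly limit-closed because $T$ does not preserve products, and monadicity alone only produces equalizers of $\Omega^{(-)}$-split pairs. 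Second, and more seriously, for $\Omega^X$ to be the power object of $X$ in $\cat S$ you need $\Omega$ to classify \emph{all} monomorphisms of $\cat S$, i.e.\ every monomorphism with Stone domain must be open. This is true, but its proof is exactly the ``only if'' half of lemma~\ref{localequiv} together with the identification $\cat S=\cat L$---material that appears \emph{after} the present theorem. Par\'e's theorem runs in the opposite direction (topos $\Rightarrow$ $\Omega^{(-)}$ monadic), so monadicity by itself does not hand you power objects. Either cite the external result as the paper does, or import the argument that monos out of Stone objects are open before invoking the power-object characterisation.
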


\begin{proof} Monadicity follows from Beck's monadicity theorem \citep[VI.7]{MR0354798}.

The definition of open en discrete here and in \citet{MR1799865} are the same. An object $X$ of $\cat S$ is \emph{compact} if $t^X:1^X\to \Omega^X$ is open, and lemma \ref{compactdiscrete} proves both. Theorem 11.18 in \citet{MR1799865} shows that $\cat S$ is a topos.\end{proof}

A practical description of Stone objects concludes this subsection.

\begin{lemma} An object is Stone if and only if it is an open subobject of $\Omega^X$ for some $X$ of $\cat C$. \label{Stoneequiv} \end{lemma}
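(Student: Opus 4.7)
The plan splits the equivalence into two implications; the forward one is routine, and the reverse one pivots on identifying $\Omega$ as the subobject classifier of the topos $\cat S$.

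\emph{Stone implies open subobject of $\Omega^Y$ for some $Y\in\cat C$.} If $X$ is Stone, then $\eta_X:X\to TX$ is the equalizer of $\eta_{TX}$ and $T\eta_X:TX\to TTX$. Since $TTX=\Omega^{\Omega^{TX}}$ has the form $\Omega^Z$, Lemma \ref{compactdiscrete}(2) shows $TTX$ is discrete. Every equalizer is a pullback of a diagonal, and the diagonal of a discrete object is open, so $\eta_X$ is open. This displays $X$ as an open subobject of $\Omega^{\Omega^X}$. To replace $\Omega^X$ by an object of $\cat C$, take a cover $c:IZ_0\to\Omega^X$ with $Z_0\in\cat C$ (Lemma \ref{proj}). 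Since $\Omega^-$ sends colimits to limits (being self-adjoint), $\Omega^c:\Omega^{\Omega^X}\to\Omega^{IZ_0}$ is the equalizer of the two maps $\Omega^{IZ_0}\rightrightarrows\Omega^K$ induced by the kernel pair $K\rightrightarrows IZ_0$ of $c$. The codomain $\Omega^K$ is again discrete, so $\Omega^c$ is open. Composing with $\eta_X$ yields an open embedding $X\hookrightarrow\Omega^{Z_0}$ with $Z_0\in\cat C$.

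\emph{Open subobject of $\Omega^Y$ implies Stone.} Let $m:X\to\Omega^Y$ be open with $Y\in\cat C$. Since $\Omega^Y$ is Stone, it lies in $\cat S$. The crucial point is that the subobject classifier of the topos $\cat S$ is $\Omega$ itself: this is because the power-object functor of $\cat S$ (coming from the monadicity of $\Omega^-$) coincides with $\Omega^-$ restricted to $\cat S$, so $P1=\Omega^1=\Omega$. Consequently, open monomorphisms in $\cat C\exlex$ whose codomain lies in $\cat S$ are precisely the monomorphisms of $\cat S$, and their domains are objects of $\cat S$ as well, since toposes are closed under subobjects. Thus $X\in\cat S$, i.e.\ $X$ is Stone.

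The main obstacle is justifying that $\Omega$ is the subobject classifier of $\cat S$. One must check that $\Omega\simeq\Omega^1$ is Stone, that the power-object functor of $\cat S$ is the restriction of $\Omega^-:\cat C\exlex\to\cat C\exlex$, and that every open monomorphism in $\cat C\exlex$ with Stone codomain is a monomorphism in $\cat S$ whose domain is Stone. The first two follow from the monadicity theorem together with Lemma \ref{compactdiscrete}. The third uses that a classifying map $\chi:Z\to\Omega$ for such an open monomorphism factors entirely within $\cat S$ (since both source and target are Stone), so the pullback that defines the open subobject is taken in $\cat S$ as well.
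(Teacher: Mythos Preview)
Your argument is essentially correct and in the same spirit as the paper's, but the two directions compare differently.

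\medskip
\emph{Forward direction.} Here you are actually more careful than the paper. The paper's proof only says ``$\eta_S$ is open because $\Omega^X$ are discrete for all $X$'', which exhibits $S$ as an open subobject of $\Omega^{\Omega^S}$ --- but $\Omega^S$ need not lie in $\cat C$, whereas the statement asks for $X\in\cat C$. Your extra step (covering $\Omega^X$ by some $IZ_0$ and using that $\Omega^c:\Omega^{\Omega^X}\to\Omega^{IZ_0}$ is an open mono because $\Omega^K$ is discrete) fills this gap cleanly. This is the same trick the paper uses later in the proof of Lemma~\ref{localequiv} to show $\Omega^X$ is local for arbitrary $X$; you have simply moved it forward.

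\medskip
\emph{Reverse direction.} Here the paper is much shorter, and your detour through ``$\Omega$ is the subobject classifier of $\cat S$'' is unnecessary. The paper just checks directly that $1$ is Stone (because $\eta_1:1\to T1$ is a split equalizer, $!:T1\to 1$ being its retraction), observes that $\Omega=\Omega^1$ is Stone (already proved), and then says: since $\cat S$ is a full subcategory and a topos, it contains $t:1\to\Omega$ and all of its pullbacks. Any open subobject of $\Omega^X$ is such a pullback (along a map $\Omega^X\to\Omega$ between Stone objects), hence Stone. Your identification of $\Omega$ as the subobject classifier of $\cat S$ is true and follows from the Par\'e-style monadicity, but it is more than you need: closure of $\cat S$ under the pullback of $t$ along a map between Stone objects is enough.

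\medskip
One point both you and the paper leave implicit: the pullback of $t$ computed in $\cat C\exlex$ must agree with the pullback computed in $\cat S$. Your sentence ``the pullback that defines the open subobject is taken in $\cat S$ as well'' gestures at this, but ``since both source and target are Stone'' is not a justification --- it is exactly what needs arguing. The phrase ``toposes are closed under subobjects'' is also not quite the right reason: a full sub-topos need not be closed under ambient subobjects. What is actually used (and what the paper takes for granted, relying on the machinery cited from \cite{MR1799865}) is that the inclusion $\cat S\hookrightarrow\cat C\exlex$ preserves finite limits. If you want to make this self-contained, the cleanest route is to note that the pullback in question is an equalizer of two maps $\Omega^X\rightrightarrows\Omega$ into a discrete object, hence an \emph{open} subobject of $\Omega^X$; then apply the equalizer-creation lemma already proved (open monos into a Stone object arise from split coequalizers under $\Omega^-$), or simply observe that the Stone condition is itself a finite-limit condition natural in $X$.
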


\begin{proof} If $S$ is Stone then $\unit S$ is open, because $\Omega^X$ are discrete for all $X$.
The object $1$ is Stone because $!:T1 \to 1$ is inverse to $\unit 1: 1\to T1$ and that makes $\unit 1$ a split equalizer of $T\unit 1$ and $\unit{T1}$.
Since $\cat S$ is a full subcategory and a topos, it contains $t:1\to \Omega$ and all of its pullbacks.
\end{proof}

The full subcategory $\cat S$ of Stone objects in $\cat C\exlex$ is a topos. The next subsections define a full subcategory $\cat L$ of \emph{local objects} and prove that it is equivalent to $\cat C\exreg$ and equal to $\cat S$.

\subsection{Local objects}
Split epimorphisms are the only regular epimorphisms that $I:\cat C \to\cat C\exlex$ preserves. Some objects of $\cat C\exlex$ still treat regular epimorphisms from $\cat C$ as regular epimorphisms. More precisely, if $\cat C\exlex$ is locally small then the functor $\cat C\exlex(I-,X):\cat C\dual \to \Set$ is a sheaf for the regular topology on $\cat C$. This subsection characterizes the objects with this property.

\begin{defin}[Local isomorphisms] A monomorphism $d:X\to IY$ is a \emph{local isomorphism}, if there is a regular epimorphism $d_0:X_0 \to Y$ in $\cat C$ such that $Id_0$ factors through $d$.
\end{defin}

\begin{lemma}[Properties of local isomorphisms] Local isomorphisms are stable under pullback (along morphisms in $\cat C$) and left orthogonal to open monomorphisms. A monomorphism $d:X\to IY$ is a local isomorphism if and only if there is a regular epimorphism $e:Z\to Y$ such that the pullback of $d$ along $Ie$ is an isomorphism. \end{lemma}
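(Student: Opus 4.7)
The statement contains three assertions, which I would prove in the order given, each by unpacking the definition of local isomorphism. Recall that $d:X\to IY$ is a local isomorphism if and only if there is a regular epimorphism $d_0:X_0\to Y$ in $\cat C$ together with a factorization $s:IX_0\to X$ satisfying $d\circ s = Id_0$. All three claims reduce to manipulating this witness data.

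For stability under pullback along a morphism $f:Y'\to Y$ in $\cat C$, I would pull $d_0$ back along $f$ inside $\cat C$ to obtain a morphism $q:X_0\times_Y Y'\to Y'$, which is again a regular epimorphism because $\cat C$ is regular. Since $I$ preserves finite limits, the factorization $s$ together with $\id_{IY'}$ induces a morphism $IX_0\times_{IY}IY'\to X\times_{IY}IY'$ whose composite with the projection to $IY'$ is $Iq$. This exhibits the pullback $X\times_{IY}IY'\to IY'$ of $d$ along $If$ as a local isomorphism witnessed by $q$.

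For orthogonality to an open monomorphism $m:V\to W$, given a commutative square $m\circ f = g\circ d$, I would pre-compose its left side with $s$ to obtain the square $m\circ(f\circ s) = g\circ Id_0$. The defining property of openness states exactly that $m$ is right orthogonal to $Id_0$, so this derived square has a unique diagonal filler $h:IY\to V$ with $h\circ Id_0 = f\circ s$ and $m\circ h = g$. Post-composing with $m$ gives $m\circ h\circ d = g\circ d = m\circ f$, and since $m$ is monic I conclude $h\circ d = f$. Uniqueness relative to the original square transfers along the same route: any competing filler $h'$ would also satisfy $h'\circ Id_0 = f\circ s$ and $m\circ h' = g$, hence agree with $h$ by the orthogonality uniqueness.

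For the characterization, the forward direction uses $d_0$ itself: the pair $(s,\id_{IX_0})$ induces a section of the pullback projection $X\times_{IY}IX_0\to IX_0$; this projection is a pullback of the monomorphism $d$, hence itself monic, and a monic split epimorphism is an isomorphism. Conversely, if the pullback of $d$ along $Ie$ is an isomorphism, inverting it and composing with the projection to $X$ produces a morphism $IZ\to X$ whose composite with $d$ equals $Ie$, so $e$ itself serves as the required witnessing regular epimorphism. I do not foresee a substantive obstacle; the only care needed is in the orthogonality argument, where one must check that existence and uniqueness of the filler with respect to the original $d$-square genuinely follow from the corresponding statements about the auxiliary $Id_0$-square, using only that $m$ is monic.
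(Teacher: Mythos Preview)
Your proof is correct and follows essentially the same route as the paper: pull back the witnessing regular epimorphism for stability, precompose with the witness $s$ to reduce the orthogonality square to one against $Id_0$, and use a section of the pullback projection for the characterization. The one noteworthy difference is in the orthogonality step: to verify that the filler $h$ obtained from the $Id_0$-square actually satisfies $h\circ d = f$, the paper argues that the witness map $IX_0\to X$ is an epimorphism and cancels it, whereas you instead observe $m\circ h\circ d = g\circ d = m\circ f$ and cancel the monomorphism $m$. Your route is cleaner, since the definition of local isomorphism does not require the factoring map $s$ to be epic, so the paper's cancellation step tacitly assumes something extra (or at least leaves it to the reader to replace $d$ by the image of $Id_0$).
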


\begin{proof} Let $d:X\to IY$ be a local isomorphism. In particular let $d_0:X_0 \to Y$ be a regular epimorphism in $\cat C$ and let $c:IX_0 \to X$ satisfy $d\circ c= Id_0$.

Let $h: W\to Y$ be any morphism. A pullback $h^*(d_0)$ of $d_0$ along $h$ is a regular epimorphism and $Ih^*(d_0)$ factors through any pullback $Ih^*(d)$ of $d$ along $Ih$, making $Ih^*(d)$ a local isomorphism.
\[ \xymatrix{
\bullet\ar[d]\ar[r]^{Ih^*(c)} \pb & \bullet \ar[r]^{Ih^*(d)} \ar[d]\pb & IW\ar[d]^{Ih}\\
IX_0 \ar[r]_c & X \ar[r]_d & IY
}\]
By generalization all local isomorphisms are stable under pullback.

Let $m: W\to Z$ be an open monomorphism and suppose that $x: X\to W$ and $y: IY\to Z$ satisfy $y\circ d = m\circ x$. In that case $m \circ x\circ f = y\circ d\circ f = y\circ Id_0$ and there is a unique $z: IY_0 \to W$ such that $z\circ Id_0 = x\circ f$ and $m\circ z = y$, because $m$ is open. Because $Id_0 = d\circ f$ and $f$ is an epimorphism, $z\circ d = x\circ f$. Therefore $d$ is left orthogonal to $m$. By generalization local isomorphisms are left orthogonal to open monomorphisms.
\[ \xymatrix{
IX_0 \ar[r]^f \ar[dr]|(.35){Id_0}\ar[d]_f & X \ar[r]^x \ar[dl]|(.35)\id & W \ar[d]^m \\
X \ar[r]_{d}  & IY\ar[r]_y \ar@{.>}[ur]|z & Z
}\]

If the pullback $Ie^*(d)$ of $d:X\to IY$ along $Ie:X\to Y$ is an isomorphism, then $Ie$ factors through $d$, so it is a local isomorphism. On the other hand, if $d_0:X_0 \to Y$ is a regular epimorphism such that $Id_0$ factors through $d$, then the pullback $Id_0^*(d)$ of $d$ along $Id_0$ is a monic factor of the pullback $Id_0^*(Id_0)$ of $Id_0$ along itself. Because $Id_0^*(Id_0)$ is a split epimorphism, $d$ must be an isomorphism.
\end{proof}

The following class of objects has the property that they treat local isomorphisms as isomorphisms.

\begin{defin} An object $X$ of $\cat C\exlex$ is a \emph{local object}, if for each local isomorphism $m:Y\to IZ$ each $f:Y \to X$ factors uniquely through $m$. Let $\cat L$ be full subcategory of local objects in $\cat C\exlex$. \end{defin}

As the name suggests a local object $X$ has the property that $\cat C\exlex(I -,X)$ is a sheaf for the regular topology of $\cat C$ if $\cat C$ is locally small. Local objects are the `sheaves' of \citet{MR1870615}.

\begin{lemma} The functor $I:\cat C\to \cat C\exlex$ factors through $\cat L$ and $I:\cat C \to\cat L$ is a \emph{resolvent} regular functor.\label{iberegular} \end{lemma}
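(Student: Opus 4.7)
The lemma asserts three things: (a) each $IW$ is a local object; (b) the factorized functor $I:\cat C\to\cat L$ preserves finite limits and regular epimorphisms; (c) it is resolvent. The main obstacle is (a); (b) and (c) are relative consequences.

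For (a), let $m:Y\to IZ$ be a local isomorphism, witnessed by a regular epi $d_0:X_0\to Z$ and $c:IX_0\to Y$ with $m\circ c = Id_0$, and let $f:Y\to IW$. The plan is to represent $Y$ as a coequalizer of a pair in the image of $I$. Using resolvency of $I:\cat C\to\cat C\exlex$, pick a cover $r:IY_0\to Y$ and a cover $s:IY_1\to K'$ of its kernel pair $(\pi_0,\pi_1):K'\rightrightarrows IY_0$; since $s$ is epic, $r$ is the coequalizer of $\pi_0 s,\pi_1 s:IY_1\rightrightarrows IY_0$, which by fullness are $I\sigma_0,I\sigma_1$ for some $\sigma_0,\sigma_1:Y_1\rightrightarrows Y_0$ in $\cat C$. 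Fullness also supplies $h_0:Y_0\to Z$ and $\phi_0:Y_0\to W$ with $Ih_0=m\circ r$ and $I\phi_0=f\circ r$, both coequalizing $(\sigma_0,\sigma_1)$. Projectivity of $IX_0$ gives $c = r\circ Ia$ for some $a:X_0\to Y_0$, whence $d_0 = h_0\circ a$; since $d_0$ is regular epic, so is $h_0$.

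The crucial step is to show $\phi_0$ coequalizes the kernel pair $(k_0,k_1):K\rightrightarrows Y_0$ of $h_0$ in $\cat C$. Because $m$ is monic, the kernel pair of $Ih_0 = m\circ r$ agrees with that of $r$, so $K'\cong IK$ with $\pi_i$ corresponding to $Ik_i$. Thus $s$ becomes a regular epimorphism $IY_1\to IK$, which by Lemma \ref{proj} is split; fullness and faithfulness translate the splitting to $\tau:Y_1\to K$ in $\cat C$ that is split epic. Since $\sigma_i = k_i\circ\tau$, the relation $\phi_0\sigma_0 = \phi_0\sigma_1$ becomes $\phi_0 k_0\tau = \phi_0 k_1\tau$, which cancels to $\phi_0 k_0 = \phi_0 k_1$. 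Hence $\phi_0 = \tilde f_0\circ h_0$ for a unique $\tilde f_0:Z\to W$, and setting $\tilde f := I\tilde f_0$ yields $\tilde f\circ m = f$ (checked by composing both sides with the epic $r$); uniqueness of $\tilde f$ follows from uniqueness of $\tilde f_0$.

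For (b), $\cat L$ is closed under finite limits in $\cat C\exlex$ by a routine orthogonality check, so $I$ preserves finite limits. To show $I$ preserves regular epimorphisms, factor $Ie$ for any regular epi $e:Z'\to Z$ of $\cat C$ in $\cat C\exlex$ as $\bar e\circ q$ with $q$ regular epic and $\bar e:Q\to IZ$ monic; then $\bar e$ is a local isomorphism because $Ie$ factors through it. Any $g:IZ'\to L$ with $L$ local that coequalizes the kernel pair of $Ie$ factors through $q$, and by locality of $L$ the resulting map factors uniquely through $\bar e$, supplying the unique lift $IZ\to L$. For (c), every local object $L$ admits a cover $c_L:IL_0\to L$ from the resolvency of $I:\cat C\to\cat C\exlex$; its coequalizer property restricts a fortiori to $\cat L$-valued maps, and any $f:IC\to L$ lifts through $c_L$ by projectivity of $IC$, so $c_L$ is a resolution in $\cat L$.
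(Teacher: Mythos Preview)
Your proof is correct. The principal difference from the paper lies in part (a). The paper first establishes an alternative characterization: an object $Z$ is local if and only if for every regular epimorphism $e:X\to Y$ in $\cat C$ with kernel pair $p,q$, each $f:IX\to Z$ satisfying $f\circ Ip = f\circ Iq$ factors uniquely through $Ie$. With this in hand, locality of $IW$ is a one-line consequence of full faithfulness of $I$ together with $e$ being a coequalizer in $\cat C$. You instead work directly from the definition of local isomorphism: you resolve $Y$, observe that the kernel pair of $m\circ r$ coincides with that of $r$ (since $m$ is monic) and hence is $I$ of the kernel pair of $h_0$, and then invoke Lemma~\ref{proj} to split the cover $IY_1\to IK$ so that the coequalizing relation for $\phi_0$ descends to $\cat C$. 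This is longer but entirely sound; the paper's detour buys brevity and, as a by-product, the converse characterization of locality (which is conceptually pleasant though not essential for later results). Your arguments for (b) and (c) are essentially the same as the paper's: the image factorization of $Ie$ into a regular epimorphism followed by a local isomorphism is exactly how the paper shows that $I:\cat C\to\cat L$ sends regular epimorphisms to coequalizers in $\cat L$, and resolvency is inherited because resolutions already live in $\cat L$.
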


\begin{proof} Let's first consider the relation between regular epimorphisms in $\cat C$ and local objects in $\cat C\exlex$.
Let $e:X\to Y$ be a regular epimorphism of $\cat C$ and let $p,q:W\to X$ be a kernel pair in $e$. Note that $Ie = d\circ c$ where $d:X'\to IY$ is a local isomorphism and $c:IX\to Y$ a regular epimorphism. For each object $Z$ and each morphism $f:IX\to Z$, $f\circ Ip = f\circ Iq$ if and only if there is a unique $g:Y'\to Z$ such that $g\circ c= f$. If $Z$ is local, then $g$ factors uniquely through $d$.
\[\xymatrix{
IW \ar@<1ex>[r]^{Ip}\ar@<-1ex>[r]_{Iq} & IX \ar@/^/[rr]^{Ie}\ar[r]_{c}\ar[dr]_f & X'\ar[r]_d\ar[d]^g & IY\ar@{.>}[dl] \\
&& Z
}\]

On the other hand, if $Z$ has the property that for each coequalizer fork $p,q:W\to X$, $e:X\to Y$ in $\cat C$ each $f:IX\to Z$ such that $f\circ Ip=f\circ Iq$ factors through $IY$, then $Z$ is a local object. The reason is that if $d:X'\to IY$ is a local isomorphism and if $c:IX \to X'$ is a regular epimorphism, then $d\circ c = Ie$ for some regular epimorphism $e:X \to Y$, because $I$ is fully faithful and there is some regular epimorphism $d_0: X_0\to Y$ such that $Id_0$ factors through $Ie$. For each $g:X' \to IY$, let $f = g\circ c$. For any kernal pair $p,q:W\to X$ of $e$, $f\circ Ip = f\circ Iq$. Therefore $f$ factors uniquely through $e$ and $g$ factors uniquely through $d$.

For $X\in\cat C$, $IX$ is local because $I$ is fully faithful. Local objects are closed under finite limits because they are right orthogonal, so $I$ remains finitely continuous. The functor $I:\cat C\to\cat L$ it regular because it preserves coequalizers. The resolutions $c_X:IX_0 \to X$ are morphisms in $\cat L$, hence $I$ is still resolvent.
\end{proof}

The following corollary of this lemma shows that $\cat L$ is close to $\cat C\exreg$ even if $\cat C$ is an ordinary regular category.

\begin{corol} If $\cat L$ is exact, then $\cat L\simeq \cat C\exreg$. \label{LisCer}\end{corol}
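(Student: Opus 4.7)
The plan is to exhibit $\cat L$ as an ex/reg completion of $\cat C$ via $I:\cat C\to\cat L$. Because $\cat L$ is exact by hypothesis and $I:\cat C\to\cat L$ is regular by Lemma \ref{iberegular}, the universal property of $J:\cat C\to\cat C\exreg$ supplies a regular functor $\Phi:\cat C\exreg\to\cat L$, unique up to isomorphism, with $\Phi J\simeq I$. It remains to show that $\Phi$ is an equivalence, which I will do by checking essential surjectivity and full faithfulness.

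For essential surjectivity I fix $Y\in\cat L$ and invoke Lemma \ref{iberegular} to view $I:\cat C\to\cat L$ as a resolvent embedding into an exact category. The lemma that follows Definition \ref{resolve} then produces a pseudoequivalence relation $(d_0,d_1):X_1\rightrightarrows X_0$ in $\cat C$ whose image under $I$ has $Y$ as its coequalizer in $\cat L$. Taking the coequalizer $Z$ of $(Jd_0,Jd_1)$ in the exact category $\cat C\exreg$ and applying the regular functor $\Phi$, I obtain $\Phi Z\simeq\mathrm{coeq}(Id_0,Id_1)\simeq Y$ because $\Phi J\simeq I$ and $\Phi$ preserves coequalizers of pseudoequivalence relations.

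For full faithfulness I choose, for each $Z_i\in\cat C\exreg$, a pseudoequivalence relation $(d^i_0,d^i_1):X^i_1\rightrightarrows X^i_0$ in $\cat C$ representing $Z_i$ through $J$; by regularity of $\Phi$ and $\Phi J\simeq I$, the same data represents $\Phi Z_i$ through $I$. Applying the lemma after Definition \ref{resolve} once through $J$ and once through $I$ identifies both $\cat C\exreg(Z_1,Z_2)$ and $\cat L(\Phi Z_1,\Phi Z_2)$ with the same quotient of the set of morphisms of pseudoequivalence relations $(X^1_0,X^1_1)\to(X^2_0,X^2_1)$ in $\cat C$ by the equivalence of that definition. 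The map $\Phi$ acts as the identity under these identifications, hence is a bijection.

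The main obstacle will be justifying that the two hom-set descriptions -- one through $J$ and one through $I$ -- really yield the same quotient set intrinsic to $\cat C$, and that $\Phi$ respects them. Both facts reduce to the observation that the equivalence relation on morphisms of pseudoequivalence relations is defined entirely inside $\cat C$ with no reference to $J$ or $I$, and that the construction of a representative $(f_0,f_1)$ from a morphism $f$ (given in the lemma after Definition \ref{resolve}) is obtained by the same recipe in either target, so compatibility with $\Phi$ is forced by $\Phi J\simeq I$ and preservation of coequalizers.
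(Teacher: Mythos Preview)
Your essential surjectivity argument and your fullness argument are fine: both use only the resolvency of $I:\cat C\to\cat L$, which Lemma \ref{iberegular} supplies. The gap is in faithfulness. You invoke ``the lemma after Definition \ref{resolve}'' for $J:\cat C\to\cat C\exreg$ in order to identify $\cat C\exreg(Z_1,Z_2)$ with a quotient of the set of morphisms of pseudoequivalence relations in $\cat C$; but that lemma has as hypothesis that the embedding be \emph{resolvent}, and at this point in the paper nothing says $J$ is resolvent --- indeed, establishing that (under extra hypotheses on $\cat C$) is one of the paper's main goals. Concretely, in $\cat C\exreg$ a morphism between quotients of equivalence relations is represented by a functional \emph{relation} in $\cat C$, not in general by a function $X_0^1\to X_0^2$; equivalently, $JX_0^1$ need not be projective with respect to the cover $JX_0^2\to Z_2$. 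Hence neither the claim that every arrow of $\cat C\exreg$ is represented by a morphism of pseudoequivalence relations, nor the claim that two such representatives inducing the same arrow must be equivalent, is available for $J$. Your closing remark that ``the equivalence relation is defined entirely inside $\cat C$'' addresses only the well-definedness direction, not these two lifting problems.

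The paper avoids this by not analysing $\Phi$ at all: it verifies the universal property of the ex/reg completion for $I:\cat C\to\cat L$ directly. For each regular $F:\cat C\to\cat E$ with $\cat E$ exact, resolvency of $I$ yields a finitely continuous $I_!(F):\cat L\to\cat E$ with $I_!(F)I\simeq F$; one then checks that $I_!(F)$ is regular by covering any regular epimorphism $e:X\to Y$ in $\cat L$ with one of the form $Ie_0$ (pull $e$ back along a resolution of $Y$, resolve the pullback, and use that $I$ is full, faithful and regular to recognise the composite $IX_0\to IY_0$ as $Ie_0$ for a regular epimorphism $e_0$ of $\cat C$), so that $I_!(F)e$ is forced to be a regular epimorphism. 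This uses only the resolvency of $I$, never that of $J$. If you want to rescue your direct approach, keep essential surjectivity and fullness as you have them and replace the faithfulness step by exactly this computation applied to $F=J$: it shows $I_!(J):\cat L\to\cat C\exreg$ is regular, and then the universal property of $J$ makes $I_!(J)$ a two-sided inverse to $\Phi$.
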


\begin{proof} Let $F:\cat C\to\cat E$ be a regular functor to an exact category. There is an up to isomorphism unique finitely continuous functor $I_!(F):\cat L \to \cat E$ such that $I_!(F)I\simeq F$, because $I$ is resolvent. If $e:X\to Y$ is a regular epimorphism in $\cat L$, then for each resolution $c_Y:IY_0 \to Y$ there is a regular epimorphism $e_0:X_0 \to Y_0$ and a map $c_X: IX_0 \to X$ such that $e\circ c_X = c_Y \circ Ie_0$. This functor $I_!(F)$ sends resolutions to regular epimorphisms and $I_!(F)Ie = Fe$ is a regular epimorphism, so $I_!(F)e\circ I_!(F)c_X = I_!(F)c_Y \circ I_!(F)Ie_0$ is a regular epimorphism and that makes $I_!(F)e$ a regular epimorphism too. Hence $I_!(F)$ is a regular functor. If $\cat L$ is exact, then it satisfies the universal property of $\cat C\exreg$ and hence the categories are equivalent.
\end{proof}

The next lemma characterizes the members of $\cat L$.

\begin{lemma} An object $X$ of $\cat C\exlex$ is a local object if and only if it is an open subobject of $\Omega^Y$ for some $Y$ of $\cat C\exlex$. \label{localequiv}\end{lemma}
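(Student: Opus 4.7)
My plan is to prove the two implications separately; the backward direction uses closure properties, and the forward direction uses an explicit embedding coming from a resolution.

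For the backward direction, suppose $m:X\hookrightarrow\Omega^Y$ is an open monomorphism with $Y\in\cat C\exlex$. First I show $\Omega$ is itself local. Given a local isomorphism $d:W\to IZ$ with witness $d_0:Z_0\to Z$ and factoring morphism $c:IZ_0\to W$ satisfying $d\circ c=Id_0$, a morphism $f:W\to\Omega$ classifies an open monomorphism $V\hookrightarrow W$. Pulling back along $c$ gives an open subobject of $IZ_0$, which by \ref{charclosed} is of the form $IV_0$ for a mono $V_0\hookrightarrow Z_0$ in $\cat C$. The kernel pair of $c$ coincides with that of $Id_0$, namely $IR_{d_0}$ where $R_{d_0}$ is the kernel pair of $d_0$ in $\cat C$; faithfulness of $I$ then shows $V_0$ is $R_{d_0}$-invariant, so by regularity of $\cat C$ it descends uniquely to $V'\hookrightarrow Z$ with $V_0=d_0^{-1}(V')$. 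The open subobject $IV'\hookrightarrow IZ$ is then classified by the required extension $g:IZ\to\Omega$, and uniqueness follows from the effectiveness of $d_0$. Next, local objects form a right-orthogonal class, hence are closed under limits. To reach $\Omega^Y$ for arbitrary $Y$, I first note that for any $Y_0\in\cat C$ the product $d\times\id_{IY_0}$ of a local isomorphism with an identity on $IY_0$ is again a local isomorphism (with witness $d_0\times\id_{Y_0}$), so transposition shows $\Omega^{IY_0}$ is local. Writing an arbitrary $Y$ as a coequalizer $IY_1\rightrightarrows IY_0\twoheadrightarrow Y$ from a pseudoequivalence in $\cat C$ makes $\Omega^Y$ the equalizer of $\Omega^{IY_0}\rightrightarrows\Omega^{IY_1}$, which is local by limit-closure. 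Finally, if $m:X\hookrightarrow L$ is an open subobject of a local object $L$, then given a local isomorphism $d:W\to IZ$ and $f:W\to X$, the composite $m\circ f$ extends uniquely to $h:IZ\to L$ by locality of $L$, and the left orthogonality of local isomorphisms against open monomorphisms (from \emph{Properties of local isomorphisms}) supplies the unique $g:IZ\to X$ with $m\circ g=h$ and $g\circ d=f$.

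For the forward direction, let $X$ be local and choose a resolution $c_X:IX_0\to X$ by \ref{proj}, with kernel pair $R\rightrightarrows IX_0$. The construction of $\cat C\exlex$ from pseudoequivalence relations in $\cat C$ identifies $R$ with $IR_0$ for a mono $R_0\hookrightarrow X_0\times X_0$ in $\cat C$; by \ref{charclosed}, $R\hookrightarrow IX_0\times IX_0$ is therefore open. Its classifying morphism transposes to $\chi_R:IX_0\to\Omega^{IX_0}$, and $R$-invariance of $\chi_R$ lets it factor through $c_X$ as a unique $\chi:X\to\Omega^{IX_0}$. The map $\chi$ is a monomorphism because $\chi\circ a=\chi\circ b$ forces the pullbacks $Z\times_X IX_0$ along $a$ and $b$ to coincide as subobjects of $Z\times IX_0$, which implies $a=b$ since $c_X$ is a regular epimorphism. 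That $\chi$ is open is verified by lifting against $Ie$ for each regular epi $e:A\to B$ in $\cat C$: given $a:IA\to X$ and $b:IB\to\Omega^{IX_0}$ with $\chi\circ a=b\circ Ie$, the classifier equation forces $a$ to be equivariant under the kernel pair of $Ie$, hence $a$ descends to $\bar a:\im(Ie)\to X$; since $X$ is local and $\im(Ie)\hookrightarrow IB$ is a local isomorphism, $\bar a$ extends uniquely to $g:IB\to X$, and $\chi\circ g=b$ follows from applying locality of $\Omega^{IX_0}$ (established in the backward direction) to the same local isomorphism.

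The main technical obstacle is the verification that $\chi$ is an open monomorphism; this depends on bootstrapping from the backward direction (to know $\Omega^{IX_0}$ is local) and on carefully tracking the descent of open subobjects between $\cat C$ and $\cat C\exlex$ through \ref{charclosed}. A minor subtlety is handling non-minimal local isomorphisms throughout, which I expect to reduce to the case $W=\im(Id_0)$ (where the factoring map $c$ is regular epi) by propagating both existence and uniqueness from the minimal case.
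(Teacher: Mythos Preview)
Your backward direction is essentially the paper's argument: first $\Omega$ is local, then $\Omega^{IY_0}$ via product-stability of local isomorphisms, then $\Omega^Y$ for arbitrary $Y$ (the paper does this by observing that $\Omega^{c_Y}:\Omega^Y\hookrightarrow\Omega^{IY_0}$ is open, you by limit-closure of a right-orthogonal class; both work), and finally open subobjects of local objects are local.

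The forward direction, however, has a genuine gap. You claim that the kernel pair $R\hookrightarrow IX_0\times IX_0$ of a cover $c_X:IX_0\to X$ is of the form $IR_0$ for a subobject $R_0\hookrightarrow X_0\times X_0$ in $\cat C$. This is false in general. If $X$ is presented by a pseudoequivalence $(X_0,X_1,d_0,d_1)$, then $R$ is the image \emph{in $\cat C\exlex$} of $I(d_0,d_1):IX_1\to I(X_0\times X_0)$. Letting $R_0$ be the image of $(d_0,d_1)$ in $\cat C$, the map $X_1\to R_0$ is a regular epimorphism in $\cat C$, but $I$ does not preserve regular epimorphisms (lemma~\ref{proj}: only split ones survive), so $IX_1\to IR_0$ need not be regular epi in $\cat C\exlex$, and $R\subsetneq IR_0$ whenever $X_1\to R_0$ fails to split. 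By lemma~\ref{charclosed} this means $R$ is not open, your classifying map $\chi_R$ is undefined, and the construction of $\chi$ collapses before you can test it for openness.

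The paper sidesteps this by proving a stronger intermediate fact: if $X$ is local then \emph{every} monomorphism with domain $X$ is open. The argument factors $Ie$ as a regular epimorphism followed by a local isomorphism and uses locality of $X$ directly, never touching $R$. From this the diagonal $\delta_X:X\to X\times X$ is open, so the singleton map $\{\cdot\}:X\to\Omega^X$ exists and is itself an open mono. Incidentally, once $\delta_X$ is known to be open its pullback $R$ along $c_X\times c_X$ is open too, so your construction could be rescued \emph{after} proving this---but then the paper's route via $\Omega^X$ is shorter. Note also that your proposed verification that $\chi$ is open is really this same factor-and-extend argument specialized to a single mono; the paper simply runs it once for an arbitrary mono out of $X$.
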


\begin{proof} \emph{Let's start with the `only if' direction.} If $X$ is a local object, then every monomorphism $m:X\to Y$ is open for the following reasons. Let $e:W\to Z$ be a regular epimorphism in $\cat C$, let $f:IW\to X$, $g:IZ\to Y$ satisfy $g\circ Ie = m\circ f$. Because $\cat C\exlex$ is regular $Ie = n\circ p$, with $n:V\to IZ$ a local isomorphism and $p:IW\to V$ a regular epimorphism. 
Since regular epimorphisms are left orthogonal to monomorphisms, there is a unique map $h:V\to X$ such that $h\circ p = f$ and $m\circ h = g\circ n$. Since $X$ is a local object, there is a unique $k:IZ\to X$ such that $k\circ n = h$ and $m\circ k = g$. Since this gives a unique factorization for every regular epimorphism $e$ of $\cat C$, $m$ is an open monomorphism.

\[ \xymatrix{
IW \ar[r]^p \ar[d]_{f}  & V \ar[r]^n\ar@{.>}[dl]|h & IZ \ar[d]_g\ar@{.>}[dll]|k \\
X \ar[rr]_m && Y
}\]

This property implies that the diagonal $X\to X\times X$ of every local object is open, which makes the singleton map $\set\cdot:X\to \Omega^X$ a monomorphism, which in turn is also open. Hence every local object is an open subobject of local object.

\emph{The `if' direction} starts with $\Omega$ itself. Suppose $d:X\to IY$ is a local isomorphism and that $u:X\to \Omega$ is some morphism. Let $m: U\to X$ be a pullback of $t:1\to \Omega$ along $u$. This is the pullback of some monomorphism $In:IV \to IY$ for the following reasons.

Let $c_X:IX_0 \to IY$ be a cover of $X_0$. The proof of lemma \ref{iberegular} shows that $d\circ c_X = Ib$ where $b$ is a regular epimorphism.
The pullback of $m$ along $c_X$ is some $Im_0$, where $m_0:U_0 \to X_0$ is a monomorphism in $\cat C$. Because $\cat C$ is regular, $b\circ m_0 = n\circ a_0$ for some monomorphism $n:V\to Y$ and some regular epimorphism $a_0:U_0\to V$. Let $c_U: IU_0\to U$ be the pullback of $c_X$ along $m$. Then $d\circ m\circ c_U = Ib\circ Im_0 = In\circ Ia_0$ and because $c_U$ is left orthogonal to $In$, there is a unique $a:U\to IV$ such that $In \circ a = d\circ m$--making $a$ a monomorphism--and $a\circ c_U = Ia_0$--making $a$ a local isomorphism.

\[\xymatrix{
IU_0 \ar[d]_{Im_0} \ar[r]_{c_U} \ar@/^/[rr]^{Ia_0}\ar@{}[dr]|<\lrcorner & U \ar[r]_a \ar[d]_{m} & IV \ar[d]^{In}\\
IX_0 \ar[r]^{c_X} \ar@/_/[rr]_{Ib} & X \ar[r]^d & IY
}\]

The monomorphism $In$ is open and $In \circ a = d\circ m$. The next part of this proof shows that $m$ is the pullback of $In$.

Let $W$ be an arbitrary object of $\cat C\exlex$ and let $p: W\to X$ and $q:W\to IV$ satisfy $d\circ p = In\circ q$. There is a cover $c_W:IW_0 \to W$. Let $a':Z\to IW_0$ be the pullback of $a$ along $q\circ c_W$ and let $q':Z\to U$ be the pullback of $q\circ c_W$ along $a$.

The following diagram shows $d\circ m \circ q' = d\circ p \circ c_W \circ a'$ and since $d$ is a monomorphism, this means $m\circ q' = p \circ c_W \circ a'$. The local isomorphism $a'$ is left orthogonal to $m$, so there is a unique $h:IW_0 \to U$ such that $h\circ a'= q'$ and $m\circ h = p\circ c_W$. Being regular epimorphism, $c_W$ is left orthogonal to $m$ too, so there a unique $k: W\to U$ such that $m\circ k = p$ and $k\circ c_W = h$.
\[ \xymatrix{
Z \ar[rr]^{q'} \ar[d]_{a'} && U \ar[r]^m \ar[d]|a & X\ar[d]^d \\
IW_0 \ar[r]_{c_W}\ar@{.>}[urr]|h & W \ar[r]_q \ar[dr]_{p}\ar@{.>}[ur]|k & IV\ar[r]_{In} & IY \\ 
& & X \ar[ur]_d \ar@/^/[uur]^(.6)\id
}\]

The following diagram show that $In\circ a\circ k = In\circ q$ and since $In$ is a monomorphism, this means $a k = q$.
\[ \xymatrix{
& U \ar[d]|m \ar[r]^a & IV \ar[d]^{In} \\  
W \ar[ur]^k \ar[r]_p \ar[dr]_q & X\ar[r]_d & IY \\
& IV \ar[ur]_{In} \ar@/^/[uur]^(.6)\id
}\]

Thus $(p,q)$ factors uniquely through $(m,a)$. By generalization the latter is a pullback cone for $(In, d)$. The open monomorphism $In$ is the pullback of $t:1\to \Omega$ along some $v:IV\circ \Omega$ and $v\circ d = u$, because the pullback of $In$ along $d$ is $m$. Therefore $\Omega$ is a local object.

Proving that other objects are local is easier now. For each $X$ of $\cat C$ the functor $-\times IX$ preserves local isomorphisms. Hence $-^{IX}$ preserves local objects.

Let $m:X\to Y$ be an open monomorphism and let $Y$ be local. Let $d: W\to IZ$ be a local isomorphism and let $x:W\to X$ be any morphism. Because $Y$ is local, there is a unique $y: IZ\to Y$ such that $y\circ d = m\circ x$. Since $d$ is left orthogonal to $m$, there is a unique $z:Z\to X$ such that $z\circ d = x$ and $m\circ d = y$. By generalization open subobjects of local object are local objects.
\[ \xymatrix{
W\ar[d]_d \ar[r]^x & X\ar[d]^m \\
IZ \ar@{.>}[r]_{y} \ar@{.>}[ur]|z & Y
}\]

Each object $X$ of $\cat C\exlex$ has a cover $c_X:IX_0\to X$ which is the coequalizer of some pseudo-equivalence relation $s,t:X_1 \to X_0$ in $\cat C$. The functor $\Omega^-$ sends coequalizers to equalizers, so $\clopow{c_X}:\clopow{X} \to \clopow{IX_0}$ is the equalizer of $\clopow s$ and $\clopow t$. Since $\clopow {X_1}$ is discrete, $\clopow{c_X}$ is an open monomorphism.

So $\Omega^X$ and every open subobject of $\Omega^X$ is local.
\end{proof}

\subsection{Equivalence}
The previous two subsections prove the following coincidence.

\begin{lemma} An object of $\cat C\exlex$ is local if and only if it is Stone. \end{lemma}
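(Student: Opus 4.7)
The plan is to deduce this coincidence directly from the two preceding characterizations: Lemma \ref{Stoneequiv} identifies Stone objects as open subobjects of $\Omega^X$ with $X$ in $\cat C$, while Lemma \ref{localequiv} identifies local objects as open subobjects of $\Omega^Y$ with $Y$ in $\cat C\exlex$. The only substantive difference is the range of the exponent, and the embedding $I$ together with the covering property of $\cat C$ inside $\cat C\exlex$ should close the gap in both directions.

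First I would dispatch the direction Stone implies local, which is immediate: an open subobject of $\Omega^X$ with $X$ in $\cat C$ is in particular an open subobject of $\Omega^{IX}$ with $IX$ an object of $\cat C\exlex$, so Lemma \ref{localequiv} declares it local.

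For local implies Stone I would use a cover to replace an arbitrary exponent from $\cat C\exlex$ by one coming from $\cat C$. Given a local object $L$, Lemma \ref{localequiv} provides an open monomorphism $m:L\to\Omega^Z$ with $Z$ in $\cat C\exlex$. Pick a cover $c_Z:IZ_0\to Z$ with $Z_0\in\cat C$. The last paragraph of the proof of Lemma \ref{localequiv} already establishes that $\Omega^{c_Z}:\Omega^Z\to\Omega^{IZ_0}$ is an open monomorphism: the cover $c_Z$ is the coequalizer of some pseudoequivalence relation $s,t:Z_1\to Z_0$ in $\cat C$, so $\Omega^{c_Z}$ is the equalizer of $\Omega^{Is}$ and $\Omega^{It}$ into the discrete object $\Omega^{IZ_1}$, and a monomorphism into a discrete object is necessarily open. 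Composing, $\Omega^{c_Z}\circ m$ exhibits $L$ as an open subobject of $\Omega^{IZ_0}$ with $IZ_0$ coming from $\cat C$, and Lemma \ref{Stoneequiv} then makes $L$ Stone.

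Because the two preceding characterizations have already done almost all the work, no real obstacle remains. The only subtle point is the openness of $\Omega^{c_Z}$, but this is precisely what the final paragraph in the proof of Lemma \ref{localequiv} supplies, so the argument goes through with nothing further to verify.
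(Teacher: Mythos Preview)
Your proposal is correct and follows the same strategy as the paper: both rely on the characterizations in Lemmas \ref{Stoneequiv} and \ref{localequiv}. The paper's proof is a single sentence asserting that the two lemmas describe the same class of objects (open subobjects of $\Omega^X$), whereas you take the stated discrepancy between the exponent ranges seriously---$X$ in $\cat C$ versus $Y$ in $\cat C\exlex$---and close it explicitly by covering $Z$ with some $IZ_0$ and using that $\Omega^{c_Z}$ is an open monomorphism. This extra step is sound and is arguably a gap the paper glosses over; indeed, the proof of Lemma \ref{Stoneequiv} itself only exhibits a Stone object $S$ as an open subobject of $\Omega^{\Omega^S}$, with exponent in $\cat C\exlex$, so the statement there should perhaps have read ``$X$ of $\cat C\exlex$'' as well. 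Either way, your reduction via a cover is the natural way to reconcile the two formulations, and nothing in your argument is superfluous or incorrect.
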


\begin{proof} Lemmas \ref{Stoneequiv} and \ref{localequiv} both establish an equivalence with the category of open subobjects of $\clopow{X}$ for arbitrary $X$. \end{proof}

This coincidence has the following consequence for the ex/reg completion.

\begin{lemma} The exact category $\cat C\exreg$ is a topos and $J:\cat C \to \cat C\exreg$ is resolvent. \end{lemma}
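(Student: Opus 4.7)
The plan is to assemble the pieces already developed rather than to do any new construction. Everything needed is in place: the Monadicity theorem gives that the full subcategory $\cat S$ of Stone objects of $\cat C\exlex$ is a topos; the coincidence lemma just proved identifies $\cat S$ with the full subcategory $\cat L$ of local objects; and Corollary \ref{LisCer} shows that as soon as $\cat L$ is exact it is equivalent to the ex/reg completion $\cat C\exreg$, with $I:\cat C\to\cat L$ playing the role of the universal regular embedding $J$.

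First I would observe that because $\cat S=\cat L$ and $\cat S$ is a topos, $\cat L$ is a topos; in particular $\cat L$ is exact. Applying Corollary \ref{LisCer} then yields an equivalence $\cat L\simeq\cat C\exreg$. Under this equivalence the regular functor $I:\cat C\to\cat L$ of lemma \ref{iberegular}, which is the canonical regular extension of $\cat C$ into the exact category $\cat L$, must correspond (up to isomorphism) to the universal regular functor $J:\cat C\to\cat C\exreg$, since $J$ is characterized by that same universal property. Transporting the topos structure across this equivalence shows that $\cat C\exreg$ is a topos.

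For the resolvency of $J$, I would transport the resolvency of $I:\cat C\to\cat L$, also guaranteed by lemma \ref{iberegular}, through the same equivalence: a resolution $c_X:IX_0\to X$ in $\cat L$ of an object $X$ becomes a resolution $JX_0\to X'$ in $\cat C\exreg$ of the image $X'$ of $X$, since equivalences preserve regular epimorphisms, kernel pairs, coequalizers, and fullness of the image of the embedding. Hence every object of $\cat C\exreg$ admits a resolution and $J$ is resolvent.

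I do not expect any real obstacle; this lemma is the payoff paragraph that consolidates the topos-theoretic content of $\cat S$ with the ex/reg-theoretic content of $\cat L$. The only thing to be slightly careful about is verifying that the equivalence $\cat L\simeq\cat C\exreg$ supplied by Corollary \ref{LisCer} does send $I$ to $J$ up to natural isomorphism, but this is built into the construction in that corollary: $\cat L$ is shown to satisfy the defining universal property of $\cat C\exreg$ precisely with $I$ as the universal regular functor.
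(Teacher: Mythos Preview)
Your proposal is correct and follows essentially the same route as the paper: use $\cat S=\cat L$ and the Monadicity theorem to see $\cat L$ is a topos (hence exact), invoke Corollary~\ref{LisCer} to identify $\cat L\simeq\cat C\exreg$ with $I$ corresponding to $J$, and then transport both the topos structure and the resolvency of $I$ from lemma~\ref{iberegular} across the equivalence. The paper's proof adds only a short explicit description of the resolutions via the reflection $K\dashv H$ between $\cat C\exlex$ and $\cat C\exreg$, but this is a cosmetic elaboration of the same argument.
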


\begin{proof} Because $\cat L\cong \cat S$ and $\cat S$ is a topos, $\cat L$ is exact, and hence equivalent to $\cat C\exreg$, which is therefore also a topos by corollary \ref{LisCer}. The equivalence connects $J:\cat C \to\cat C\exreg$ to $I:\cat C \to \cat L$, which is resolvent by lemma \ref{iberegular}.

Let $X$ be any object of $\cat C\exreg$. The object $HX$ has a cover $c_X:IX_0 \to HX$ where $IX_0$ is projective. It follows that $Kc_X: KIX_0\simeq JX_0\to KH\simeq X$ is a resolution. 
\end{proof}

This section therefore concludes as follows.

\begin{theorem} For each regular $\cat C$ the following are equivalent:
\begin{itemize}
\item $\cat C$ is weakly locally Cartesian closed and has a generic monomorphism;
\item $\cat C\exreg$ is a topos and $J:\cat C \to\cat C\exreg$ is resolvent.
\end{itemize}\label{ThB}
\end{theorem}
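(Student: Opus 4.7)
The plan is to prove the two implications separately. The forward direction (1) $\Rightarrow$ (2) is already packaged by the lemma immediately preceding the theorem, which assembled the chain: weak dependent products plus generic monomorphism imply $\cat C\exlex$ is locally Cartesian closed with an open-subobject classifier (Theorem \ref{closubclass}), whence the Stone category $\cat S$ is a topos (Monadicity Theorem) coinciding with the local-object category $\cat L \simeq \cat C\exreg$ (Corollary \ref{LisCer}), carrying the resolvent structure of $I:\cat C\to\cat L$ (Lemma \ref{iberegular}) across to $J$.

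For the converse, assume $\cat C\exreg$ is a topos and $J$ is resolvent. Since every topos is locally Cartesian closed, Proposition \ref{lccc} immediately gives that $\cat C$ is weakly locally Cartesian closed. The remaining task is to exhibit a generic monomorphism in $\cat C$. The strategy is to pull the subobject classifier back through a resolution of its classifying object. Take the subobject classifier $t:1\to\Omega$ of $\cat C\exreg$ together with a resolution $c:JP\to\Omega$ whose domain lies in the image of $J$. Let $m:V\to JP$ be the pullback of $t$ along $c$, which is a monomorphism in $\cat C\exreg$, and let $c':JE\to V$ be a resolution of $V$. By fullness of $J$ the composite $m\circ c'$ is of the form $Je$ for some $e:E\to P$ in $\cat C$. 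Factor $e$ in the regular category $\cat C$ as a regular epimorphism $E\twoheadrightarrow E'$ followed by a monomorphism $\gamma:E'\rightarrowtail P$; since $J$ is a regular functor, $J\gamma\circ Je'$ is an image factorization of $Je = m\circ c'$ in $\cat C\exreg$, and uniqueness of image factorizations forces $J\gamma\cong m$.

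To verify that $\gamma$ is generic, let $n:A\rightarrowtail B$ be any monomorphism in $\cat C$. Then $Jn$ is a monomorphism in the topos $\cat C\exreg$ and is classified by a unique $\chi:JB\to\Omega$. The resolution property of $c$, combined with the fullness of $J$, yields some $h:B\to P$ in $\cat C$ with $\chi = c\circ Jh$. Pullback pasting identifies $Jn$ with the pullback of $m\cong J\gamma$ along $Jh$, and since $J$ is fully faithful and finitely continuous this pullback is (up to unique isomorphism) the image under $J$ of the pullback of $\gamma$ along $h$ in $\cat C$. Hence $n$ is a pullback of $\gamma$, and $\gamma$ is a generic monomorphism.

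The step requiring the most care is the image-factorization argument producing $\gamma$: one must know that the pullback of the topos classifier through a resolution descends to a genuine monomorphism in $\cat C$, not merely to an object of $\cat C\exreg$. The confluence of three facts makes this routine---$J$ is regular (so it preserves image factorizations), $J$ is fully faithful (so morphisms and their factorizations lift between $\cat C$ and its image), and $\cat C$ itself is regular (so the desired epi-mono factorization of $e$ exists in $\cat C$). Once $\gamma$ is in hand, the verification that it classifies every mono of $\cat C$ reduces to combining the universal property of the topos classifier with resolvency and the preservation/reflection of pullbacks.
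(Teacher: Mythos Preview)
Your proof is correct and aligns with the paper's approach. The forward direction invokes the same preceding lemma, and for the converse the paper's one-line proof cites Proposition~\ref{lccc} (giving weak local Cartesian closure) and Theorem~\ref{closubclass} (giving the generic monomorphism); your direct argument in $\cat C\exreg$---resolve $\Omega$, pull back $t$, descend the resulting subobject of $JP$ to $\cat C$ via an image factorization, then lift classifying maps through the resolution---is exactly the ``only if'' proof of Theorem~\ref{closubclass} transplanted from $\cat C\exlex$ to $\cat C\exreg$, with the resolution property of $c:JP\to\Omega$ replacing projectivity of $IY$. One small economy: for the step producing $\gamma$ you only need $c'$ to be a cover from the image of $J$ (Lemma~\ref{proj}); the full resolution property is required only at the last step, to lift $\chi$ through $c$.
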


\begin{proof} The previous lemma combined with theorem \ref{ThA}, proposition \ref{lccc} and theorem \ref{closubclass} prove this. \end{proof}

\hide{Menni's vraag over $\cat L$ is hiermee ook beantwoord: als $\cat L$ een topos is, dan is hij $\cat C\exreg$, en daar volgt verder alles uit. }

\section{Tripos theory}
This section demonstrates that resolvent functors play an important role in tripos theory. Toposes constructed from triposes are examples of ex/reg completions of regular categories with weak dependent products and a generic monomorphism thank to resolvent functors.

\subsection{Triposes}\label{TriThe}
Triposes are first order hyperdoctrines with some added structure. First order hyperdoctrines assign a `Heyting algebra of subobjects' to each object in a category of `types' and add some maps between the Heyting algebra for the purpose of interpreting quantification over those types. Triposes permit higher order quantification, because every type has an `intensional power type'.

\begin{defin} A \emph{tripos} is a presheaf $T$ on a category $\cat B$ that has finite products, with the following properties.
\begin{enumerate}
\item The presheaf $T$ is an internal Heyting algebra of the category of presheaves, i.e. for each $f:X\to Y$, $TX$ and $TY$ are Heyting algebras and $Tf:TY\to TX$ is a morphisms of Heyting algebras.
\item For each $f:X\to Y$ of $\cat B$, $Tf$ has both adjoints $\exists_f\dashv Tf \dashv \forall_f$ relative to the ordering of the Heyting algebras $TX$ and $TY$. 
These adjoints are stable under products, i.e.\ for all $f:A\to B$ and $g:C\to D$ in $\cat B$, 
\[ \exists_{f\times \id_C}\circ T(\id_A\times g) = T(\id_B\times g)\circ \exists_{f\times \id_D} \]
\item For each object $X$ of $\cat B$ there is an object $\pi X$ of $\cat B$ and a \emph{generic predicate} $\epsilon X\in T(\pi X\times X)$. Here, `$\epsilon_X$ is a generic predicate' means that for each object $Y$ and each $\alpha\in T(Y\times X)$ there is a map $a:Y\to \pi X$ such that $T(a\times \id_X)(\epsilon X) = \alpha$.
\end{enumerate}
\end{defin}

`Tripos' stands for \emph{topos representing indexed partially ordered set}. For each tripos $T:\cat B\dual \to \Set$, `$T$-valued partial equivalence relations' represent quotients of subobjects of triposes and `$T$-valued functional relations' represent morphisms between quotients of subobjects. Thanks to the presence of `weak power types' in the form of generic predicates, a tripos represents a topos.

\newcommand\inv{^{-1}}
\newcommand\im{\exists_}
\begin{defin} The set $T(X\times Y)$ is like a set of relations between $X$ and $Y$. In order to manipulate these relations, I introduce the following operations.
\begin{itemize}
\item Composition of $T$-valued relations is defined as follows. Let $e\in T(X\times Y)$ and $f\in T(Y\times Z)$, let $\pi_{02}(x,y,z) = (x,z)$, $\pi_{12}(z,x,y) = (y,z)$ and $\pi_{01}(x,y,z) = (x,y)$ be the projections of $X\times Y\times Z$.
\[ f\circ e = \im{\pi_{02}} T\pi_{12}(f)\land T\pi_{01}(e) \]
\item Inversion of $T$-valued relations is defined as follows. Let $e\in T(X\times Y)$ and let $\sigma(x,y) = (y,x)$. 
\[ e\inv = T\sigma(e) \]
\end{itemize}

The objects of $\cat B[T]$ are pairs $(X,e)$ where $e\in T(X\times X)$ is a \emph{partial equivalence relation}, i.e.\ a symmetric and transitive relation. Using the operators above, this means:
\begin{align*} e\inv &\leq e & e\circ e &\leq e \end{align*}
A morphism $(X,e) \to (Y,e')$ is a \emph{functional relation} $f\in T(X\times Y)$, which means $f\leq T((x,y)\mapsto (x,x))(e)$ and:
\begin{align*}
e'\circ  f = f \circ e &= f &
e &\leq f\inv \circ f &
f\circ f\inv &\leq e' 
\end{align*}
The $\circ$-operator for $T$-valued relations functions as composition of functional relations. The identity morphism of $(X,e)$ is just $e$ itself.
\end{defin}

\hide{ $f(x,y)  = \exists_z f(z,y)\land e(z,x)$ geeft $f(x,y) \leq e(x,x)$? Als het waar is, dan is het niet triviaal. }

\begin{lemma} For each object $X$ of $\cat B$, let $\nabla X = (X,\im{\delta_X}(\top))$ where $\delta_X:X\to X\times X$ is the diagonal; for each $f:X\to Y$ in $\cat B$, $\nabla f = \im{(\id_X,f)}(\top)$. This makes $\nabla$ a finite product preserving functor $\cat B \to \cat B[T]$.
\end{lemma}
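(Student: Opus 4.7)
The proof is a routine verification in the internal language of the tripos, which I would organize into four tasks: well-definedness of $\nabla X$, well-definedness of $\nabla f$, functoriality, and preservation of finite products. Throughout I would adopt internal-language shorthand, writing $[x=x']$ for $\im{\delta_X}(\top)\in T(X\times X)$ and $[y=f(x)]$ for $\im{(\id_X,f)}(\top)\in T(X\times Y)$; each defining clause then unfolds to a short Beck--Chevalley calculation using the product-stability axiom of the tripos.

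First I would verify that $[x=x']$ is symmetric and transitive, so that $\nabla X$ is an object of $\cat B[T]$. Symmetry reduces to $T\sigma(\im{\delta_X}(\top))=\im{\delta_X}(\top)$, which follows from $\sigma\circ\delta_X=\delta_X$ together with the naturality of $\im{-}$. Transitivity unfolds $[x=x']\circ[x'=x'']$ using the definition of composition, and product-stability collapses the resulting expression back to $[x=x'']$. Next, for $f:X\to Y$, I would check that $\nabla f=[y=f(x)]$ satisfies the four axioms of a functional relation $\nabla X\to\nabla Y$: strictness, left and right extensionality, single-valuedness, and totality. Each follows by unfolding the composition operator and applying product-stability to the morphism $(\id_X,f):X\to X\times Y$. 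Functoriality is then immediate: $\nabla(\id_X)=\im{\delta_X}(\top)$ is the identity of $\nabla X$ by definition, and $\nabla(g\circ f)=\nabla g\circ\nabla f$ follows by computing both sides on $X\times Z$ and matching the existential over the intermediate $Y$ via product-stability.

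Finally, for preservation of finite products, $\nabla 1=(1,\top)$ is terminal in $\cat B[T]$ because for any $(X,e)$ the strictness predicate of $e$ furnishes the unique functional relation into $(1,\top)$. For binary products, I would exhibit $\nabla(X\times Y)$ together with $\nabla\pi_0$ and $\nabla\pi_1$ as a product cone: given functional relations $r:(Z,e)\to\nabla X$ and $s:(Z,e)\to\nabla Y$, the pairing is the conjunction of the appropriate reindexings of $r$ and $s$ in $T(Z\times(X\times Y))$, and both commutativity with the projections and uniqueness reduce to direct computations with $\land$, $\im{-}$, and product-stability. The only real obstacle is notational rather than conceptual: the Beck--Chevalley bookkeeping involves many product-projection maps and it is easy to lose track of subscripts without the $[x=x']$-style shorthand, but once that convention is in place the entire lemma becomes mechanical.
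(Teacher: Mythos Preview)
Your proposal is correct and is precisely the standard verification one finds in the tripos-theory literature. The paper itself does not give a proof at all: it simply cites \cite[section 2.4]{MR2479466} (van Oosten's book), where exactly the kind of internal-language computation you outline is carried out. So there is no meaningful difference of approach to report; you have supplied the argument that the paper chose to delegate to a reference, and your organization into well-definedness, functoriality, and product preservation matches the expected treatment.
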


\begin{proof} See \citet[section 2.4]{MR2479466}. \end{proof}

\newcommand\sub{\mathsf{Sub}}
\subsection{Resolvency}
This subsection characterizes functors $F:\cat B\to\cat E$ which are equivalent to $\nabla: \cat B \to \cat B[T]$ if $\cat E$ is well powered. The resolvency of a functor dependent on $F$ plays a crucial role.

\begin{defin} Let $D:\cat B \to\cat E$ be a finite product preserving functor and define the category $\cat E\downmono D$ as follows. The objects are monomorphisms $m:X\to DY$. Let $m':X'\to DY'$ be another object, then a morphism $m \to m'$ is a pair $(f,g)$ where $f:X\to X'$ of $\cat E$, $g:Y\to Y'$ of $\cat B$ and $Dg\circ m = m'\circ f$. \end{defin}

In this case $\cat E\downmono D$ is just a subcategory if the comma category $\cat E\downarrow D$ (in contrast to $D\downepi \cat E$ from the proof of lemma \ref{converse}). It comes with two projections $\Pi_0:\cat E\downmono D \to\cat E$ and $\Pi_1:\cat E\downmono D \to\cat B$. For a tripos $T$ resolvency comes into play for one of these projections.

\begin{lemma} If $T$ is a tripos, then the projection $\Pi_0:\cat B[T]\downmono \nabla \to\cat B[T]$ is resolvent.\label{tripco} \end{lemma}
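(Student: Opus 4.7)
The plan is to resolve each object $(X,e)$ of $\cat B[T]$ by a distinguished subobject of $\nabla(\pi X)$ in the comma category $\cat B[T]\downmono\nabla$. The key ingredient is the generic predicate $\epsilon X\in T(\pi X\times X)$, which converts every $T$-valued relation $\alpha\in T(Y\times X)$ into a morphism $Y\to \pi X$ in $\cat B$, supplying exactly the external datum needed for a morphism of the comma category.

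Concretely, I would set $Y_0=\pi X$ and let $m_X:X_0\hookrightarrow \nabla(\pi X)$ be the subobject classified by the predicate
\[
\phi(p)\;=\;\bigl(\exists x.\,\epsilon X(p,x)\wedge e(x,x)\bigr)\wedge \bigl(\forall x,x'.\,\epsilon X(p,x)\wedge \epsilon X(p,x')\to e(x,x')\bigr)\;\in\;T(\pi X).
\]
The intended cover $c_X:X_0\to (X,e)$ is the functional relation $\epsilon X$ itself, restricted to where $\phi$ holds; the two conjuncts of $\phi$ are precisely what is needed to verify totality and single-valuedness with respect to $e$. To see that $c_X$ is a regular epimorphism, apply the generic predicate axiom to $e\in T(X\times X)$ to obtain $\sigma:X\to \pi X$ in $\cat B$ with $T(\sigma\times \id_X)(\epsilon X)=e$; then $\phi(\sigma(x))\wedge \epsilon X(\sigma(x),x)\geq e(x,x)$, so the image of $c_X$ exhausts the extent of $(X,e)$.

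For the universal property, let $m':X'\hookrightarrow \nabla Y'$ be an arbitrary object of the comma category and let $f:X'\to (X,e)$ be a morphism in $\cat B[T]$. Writing $\phi'\in T(Y')$ for the predicate classifying $X'$, the map $f$ unpacks into a functional relation $\tilde f\in T(Y'\times X)$. The generic predicate axiom yields $g_0:Y'\to \pi X$ in $\cat B$ with $T(g_0\times \id_X)(\epsilon X)=\tilde f$. A pointwise check then shows $\phi'(y')\leq \phi(g_0(y'))$: after substitution the first conjunct of $\phi$ becomes $\exists x.\,\tilde f(y',x)\wedge e(x,x)$, which follows from totality of $\tilde f$ on $\phi'$ combined with the fact that $\tilde f$ lands in the extent of $e$; the second conjunct becomes single-valuedness of $\tilde f$, which holds unconditionally. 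Hence $\nabla g_0$ factors $m'$ through $m_X$ via a morphism $f_0:X'\to X_0$ in $\cat B[T]$, so $(f_0,g_0)$ is a morphism of the comma category, and the equation $T(g_0\times \id_X)(\epsilon X)=\tilde f$ says exactly that $c_X\circ f_0=f$.

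The step I expect to be most delicate is calibrating $\phi$: too weak a predicate fails to make $\epsilon X$ into a bona fide functional relation on $X_0$, while too strong a predicate is not forced by $\phi'$ after substitution along $g_0$. Once $\phi$ is fixed as above, the remaining verifications are routine tripos-internal manipulations using only the PER axioms on $e$, the functional-relation axioms for $\tilde f$, and the Beck--Chevalley stability built into the notion of a tripos.
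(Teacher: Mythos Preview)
Your strategy coincides with the paper's: resolve $(X,e)$ by a subobject of $\nabla(\pi X)$ carved out by a predicate in $T(\pi X)$, let the generic predicate $\epsilon_X$ furnish the covering map, and use the defining property of $\epsilon_X$ to manufacture the $\cat B$-component $g_0$ of the required lift. The paper chooses a different (stronger) predicate, namely the singleton condition
\[
\Sigma(e)(\xi)\;=\;\exists x.\,\forall y.\,\bigl(\epsilon_X(\xi,y)\leftrightarrow e(x,y)\bigr),
\]
but the architecture of the argument is the same.

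There is, however, one genuine slip in your calibration of $\phi$. A morphism $c_X:X_0\to(X,e)$ in $\cat B[T]$ must satisfy not only totality and single-valuedness but also the extensionality clause $e\circ c_X=c_X$. With $c_X(p,x)=\phi(p)\wedge\epsilon_X(p,x)$ this fails: from $\phi(p)$, $\epsilon_X(p,x')$ and $e(x',x)$ one cannot infer $\epsilon_X(p,x)$, because nothing in your two conjuncts forces $\epsilon_X(p,-)$ to be closed under $e$. The paper's $\Sigma(e)$ sidesteps this by forcing $\epsilon_X(\xi,-)$ to coincide with a full $e$-class $e(x,-)$. Your route is salvaged by either of two equivalent tweaks: add a third conjunct $\forall x,x'.\,\epsilon_X(p,x)\wedge e(x,x')\to\epsilon_X(p,x')$ to $\phi$, or keep $\phi$ and set $c_X=\phi\wedge(e\circ\epsilon_X)$. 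With the second choice the verification $c_X\circ f_0=f$ still goes through verbatim, since $\tilde f$ already satisfies $e\circ\tilde f=\tilde f$; and the surjectivity witness via $\sigma:X\to\pi X$ is unaffected because $e\circ e=e$. Your closing remark that calibrating $\phi$ is the delicate step was exactly right.
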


\begin{proof} Let $(X,e)$ be an object of $\cat B[T]$. Define $\Sigma(e) \in T(\pi X)$ as follows:
\[ \Sigma(e)(\xi) = \im{x\in X} \forall_{y\in Y} \epsilon_X(\xi,y) \leftrightarrow e(x,y)\]
Let $\Sigma(X,e) = (\pi X, \im{\delta_X}(\Sigma(e)))$. This way, $\im{\delta_X}(\Sigma(e))$ defines a monomorphism $\Sigma(X,e) \to \nabla X$ and $\epsilon_X$ defines a morphism $\Sigma(X,e) \to (X,e)$.

Let $m:(Y,e') \to \nabla Z$ be some monomorphism and let $f:(Y,e') \to (X,e)$. The relation $f\circ m\inv$ equals $T(g\times \id_X)(\epsilon_X)$ for some $g:Z\to \pi X$ by the definition of triposes.
Now $\nabla g\circ m$ factors (uniquely) through the inclusion $\Sigma(X,e) \to \nabla\pi X$.
Furthermore, $\epsilon_X\circ \nabla g\circ m = f$. Therefore the monomorphism $\Sigma(X,e) \to \nabla\pi X$ is a resolution of $(X,e)$. By further generalization $\Pi_0:(\cat B[T]\downmono \nabla)\to\cat B[T]$ is resolvent. \end{proof} 

\begin{remark} This lemma is another way of saying that every object $(X,e_X)$ in $\cat B[T]$ is isomorphic to a \emph{weakly complete object} $(X',e_{X'})$. A weakly complete object has the property that every functional relation $f:(Y,e_Y) \to (X', e_{X'})$ satisfies $e_Y\leq T(\id\times g)(f)$ for some $g:Y\to X'$. Tripos theory uses this property to explain why certain morphisms of triposes induce left exact functors between the represented toposes \citep{a2CAotTtTC,Pittsthesis,MR578267,MR2479466}.
\end{remark}

In a \emph{well powered} finitely complete category $\cat E$ every object $X$ has a \emph{set} $S_X$ of monomorphism such that for every monomorphism $m:Y\to X$ there is a monomorphism $m':Y'\to X$ in $S_X$ and an isomorphism $f:Y\to Y'$ such that $m=m'\circ f$. If $\cat E$ is well powered, then there is a functor $\sub:\cat E\dual \to \Set$ which assigns a poset of subobjects to each object $X$ of $\cat E$; for each morphism $f:X\to Y$ pullbacks of monomorphisms along $f$ determine the map $\sub(f):\sub(Y)\to \sub(X)$. 

Any product preserving functor $D:\cat B \to\cat E$ induces a presheaf $\sub(D-)$ on $\cat B$ which satisfies part of the properties of a tripos trivially. The poset $\sub(DX)$ is a Heyting algebra, for each morphism $f:X\to Y$ of $\cat B$, $Df\inv:\sub(DY) \to \sub(DX)$ is a morphism of Heyting algebras which has both adjoints. These adjoints satisfy Beck-Chevalley over the pullback squares that $D$ preserves, which mean that the adjoint are stable under products. The resolvency condition proved in lemma \label{trico} insures that $\sub(D-)$ also has generic predicates.

\begin{lemma} Let $\cat E$ be a well powered topos, let $D:\cat B \to\cat E$ be product preserving and let $\Pi_0:\cat E\downmono D \to \cat E$ be resolvent, then $\sub(D-)$ is a tripos. \label{cotrip} \end{lemma}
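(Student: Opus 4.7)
My plan is to verify the three tripos axioms for $\sub(D-)$ in turn. Axioms (1) and (2) are routine inheritance from the topos structure of $\cat E$, so I would dispense with them quickly: each $\sub(DX)$ is a Heyting algebra, each pullback map $\sub(Df)$ is a Heyting algebra morphism with both adjoints $\im{Df}\dashv \sub(Df)\dashv \forall_{Df}$ supplied by local Cartesian closure, and stability under products reduces to Beck--Chevalley for the square of $f\times \id$ against $\id\times g$. Since $D$ preserves products it sends any such square to a product square, which is automatically a pullback, and Beck--Chevalley for pullbacks in a topos does the rest.

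The substantive work is axiom (3), producing generic predicates. Fix $X$ in $\cat B$ and consider the power object $\Omega^{DX}$ of $\cat E$ together with its tautological subobject $\tau:M \to \Omega^{DX}\times DX$. Applying resolvency of $\Pi_0$ to $\Omega^{DX}$ furnishes an object $m_0:E_0 \to D(\pi X)$ of $\cat E\downmono D$---thereby \emph{defining} the object $\pi X$ of $\cat B$---together with a regular epimorphism $c:E_0 \to \Omega^{DX}$ through which every morphism into $\Omega^{DX}$ factors compatibly. I would then let $\epsilon_X \in \sub(D(\pi X\times X))$ be the subobject of $D\pi X\times DX$ obtained by pulling $\tau$ back along $c\times \id_{DX}$ and composing with the monomorphism $m_0\times \id_{DX}:E_0\times DX \to D\pi X\times DX$.

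To verify genericity, given $\alpha \in \sub(D(Y\times X))$ with characteristic morphism $\chi_\alpha:DY\to \Omega^{DX}$ in $\cat E$, I would apply the resolution condition to $\chi_\alpha$ viewed as a map out of the trivial object $\id_{DY}$ of $\cat E\downmono D$. This produces a morphism $(a',a):\id_{DY}\to m_0$ in $\cat E\downmono D$, i.e.\ maps $a:Y\to \pi X$ in $\cat B$ and $a':DY\to E_0$ in $\cat E$ with $m_0\circ a' = Da$ and $c\circ a' = \chi_\alpha$. A short pullback chase using $D(a\times \id_X) = (m_0\times \id_{DX})\circ (a'\times \id_{DX})$ then yields $\sub(D(a\times \id_X))(\epsilon_X) = (\chi_\alpha\times \id_{DX})\inv(\tau) = \alpha$, as required.

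The main obstacle, and the sole place where the resolvency hypothesis is used, is passing from the classifying $\cat E$-morphism $\chi_\alpha$ to a genuine $\cat B$-morphism $a$. The comma-style category $\cat E\downmono D$ is engineered so that each of its morphisms bundles an $\cat E$-component with a $\cat B$-component tied by a commuting square, and resolvency of the projection $\Pi_0$ is precisely the condition that lifts any $\cat E$-morphism out of the $\cat E$-part of an object of $\cat E\downmono D$ to such a bundled morphism. This supplies the $\cat B$-preimage $a$ that the generic predicate axiom demands.
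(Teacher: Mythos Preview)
Your proof is correct and follows the same overall strategy as the paper: axioms (1) and (2) are inherited from the topos structure via the product-preservation of $D$, and axiom (3) is obtained by applying resolvency of $\Pi_0$ to the power object $\Omega^{DX}$, then using the lifting property of the resolution to extract a $\cat B$-morphism $a:Y\to\pi X$ from the classifying $\cat E$-morphism $\chi_\alpha$.

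The one place where you diverge from the paper is in how you handle the resolution itself. The paper observes that $\Omega^{DX}$ is injective in the topos $\cat E$, so the resolution map $e:E_0\to\Omega^{DX}$ extends along the mono $m_0:E_0\to D\pi X$ to a map $c:D\pi X\to\Omega^{DX}$; this $c$ is again a resolution, now based at the identity object $\id_{D\pi X}$ of $\cat E\downmono D$. With this replacement the generic predicate $\epsilon_X$ is simply the pullback of $t$ along the evaluation $D(\pi X\times X)\to\Omega$, and the factorization of $\chi_\alpha$ through $c$ immediately yields $a:Y\to\pi X$ with $c\circ Da=\chi_\alpha$. You instead keep the raw resolution $(m_0,c)$ and define $\epsilon_X$ as the pushforward along $m_0\times\id_{DX}$ of $(c\times\id_{DX})^*\tau$; the pullback chase you sketch, using $Da=m_0\circ a'$, then recovers $\alpha$. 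Both arguments are valid: the paper's injectivity trick buys a cleaner description of $\epsilon_X$ and avoids tracking the auxiliary $a'$, while your version is a bit more direct in that it uses only the bare resolvency hypothesis without appealing to any additional property of power objects.
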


\begin{proof} Let $t:1\to\Omega$ be the subobject classifier of $\cat E$. For each $X$ of $\cat B$, $\Omega^{DX}$ has a resolution, which is a monomorphism $m:X_0\to D\pi X$ together with a morphism $e:X_0 \to DX$. Because $\Omega^{DX}$ is injective, $e$ factors through $m$ in a new map $c: D\pi X \to \Omega^X$ which necessarily determines an new resolution.

There is a morphism $D(\pi X\times X) \to \Omega^{DX}\times DX \to \Omega$ defined by $(x,y)\mapsto c(x)(y)$. Let $\epsilon_X$ be the set of all pullbacks of $t:1\to \Omega$ along this morphism.

Each monomorphism $m:Z \to D(Y\times X)$ is the pullback of $t:1\to\Omega$ along some $f:D(X\times Y) \to \Omega$. The transpose $f^t: DY \to \Omega^{DX}$ of $f$ factors through $c:D\pi X \to \Omega^{DX}$, giving $f_0: Y \to \pi X$ in $\cat B$ such that $c\circ Df_0 = f$. The subobject generated by $m$ is the inverse image of $\epsilon_X$ along $Df_0$. Hence $\epsilon_X$ is a generic predicate and $\sub(D-)$ is a tripos by generalization.
\end{proof}

The following notions of isomorphism turn triposes over $\cat B$ and finite product preserving functors $\cat B \to \cat E$ into two groupoids. The mapping $D\mapsto \sub(D-)$ is an equivalence of these groupoids.

\begin{defin} Let $T,U:\cat B\dual \to \Set$ be triposes. An \emph{isomorphism of triposes} $T\to U$ is a natural isomorphism $\mu:T\to U$ such that $\mu_X:TX \to UX$ preserves finitary joins. Triposes and isomorphisms of triposes together form the \emph{groupoid of triposes} over $\cat B$.

Let $\cat E$ and $\cat E'$ be toposes and let $D:\cat B\to\cat E$ and $D':\cat B\to\cat E'$ be finite product preserving functors. In this case an isomorphism $D\to D'$ is an equivalence of categories $F:\cat E\to \cat E'$ together with a natural isomorphism $\phi:FD \to D'$. \end{defin} 

\begin{theorem} The groupoids of triposes over $\cat B$ is equivalent to the groupoid of finite product preserving functors $D:\cat B \to \cat E$ to toposes, for which $\Pi_0:\cat E/D \to \cat E$ is resolvent. \end{theorem}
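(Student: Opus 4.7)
The plan is to exhibit quasi-inverse functors between the two groupoids and show that the unit and counit are natural isomorphisms. In one direction, send a tripos $T:\cat B\dual\to\Set$ to $\nabla:\cat B\to\cat B[T]$: the topos $\cat B[T]$ is well powered, and lemma \ref{tripco} shows that $\Pi_0$ is resolvent, so this lands in the target groupoid. In the other direction, send a product preserving $D:\cat B\to\cat E$ satisfying the hypotheses to the tripos $\sub(D-)$ of lemma \ref{cotrip}. On the isomorphisms of the two groupoids the assignments are the evident ones; functoriality is routine.

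For the natural isomorphism $\mu:T\to\sub(\nabla-)$, I would send $\phi\in TX$ to the subobject of $\nabla X$ represented by the monomorphism $(X,\im{\delta_X}(\phi))\hookrightarrow\nabla X$. The standard analysis of subobjects in tripos-based toposes \citep[section 2.4]{MR2479466} identifies $\sub(\nabla X)$ with strict predicates on $X$ up to bi-entailment, which gives both injectivity and surjectivity of $\mu_X$. Naturality in $X$ is read off the definition of $\nabla f$, and preservation of the Heyting operations follows because these are computed on both sides by the same tripos constructions.

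In the other direction, I would build an equivalence $F:\cat B[\sub(D-)]\to\cat E$ equipped with a natural isomorphism $F\nabla\simeq D$. On objects, send $(X,e)$, with $e\hookrightarrow D(X\times X)$ symmetric and transitive, to the coequalizer in $\cat E$ of the two projections $e\rightrightarrows DX$; a functional relation $f\hookrightarrow D(X\times Y)$ yields a morphism between coequalizers via the induced span of pseudoequivalence relations. The isomorphism $F\nabla\simeq D$ is immediate because the coequalizer of the two projections of $\im{\delta_X}(\top)\hookrightarrow D(X\times X)$ is $DX$ itself. Essential surjectivity invokes resolvency: each $Z\in\cat E$ admits a resolution consisting of a cover $c_Z:X_0\to Z$ and a monomorphism $X_0\hookrightarrow DY_0$, and the image in $D(Y_0\times Y_0)$ of the kernel pair of $c_Z$ is a partial equivalence relation $e$ with $F(Y_0,e)\simeq Z$.

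The main obstacle is full faithfulness of $F$. To establish it I would represent each morphism $F(X,e)\to F(Y,e')$ by its graph, a subobject of $F(X,e)\times F(Y,e')$, and use a further application of resolvency of $\Pi_0$ to lift this graph to a subobject of $D(X\times Y)$; then verify that the resulting subobject is a functional relation inverse to the action of $F$ on morphisms. Well-poweredness is used throughout so that $\sub(D-)$ is a genuine set-valued presheaf, and so that the tripos operations inside $\cat B[\sub(D-)]$ correspond, object by object, to the subobject calculus in $\cat E$; combined with $\mu$ applied to $T=\sub(D-)$, this closes the equivalence.
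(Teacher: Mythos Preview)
Your proposal is correct and follows the same route as the paper: build quasi-inverses $T\mapsto(\nabla:\cat B\to\cat B[T])$ and $D\mapsto\sub(D-)$, invoke the standard isomorphism $T\simeq\sub(\nabla-)$, define $F:\cat B[\sub(D-)]\to\cat E$ by taking quotients of partial equivalence relations, and use resolvency for essential surjectivity. One small simplification for the full faithfulness step: you do not need a further appeal to resolvency there; given $g:F(X,e)\to F(Y,e')$, simply pull its graph back along the explicit cover $E_X\times E_Y\twoheadrightarrow F(X,e)\times F(Y,e')$ (where $E_X\hookrightarrow DX$ is the domain of $e$) and compose with the inclusion into $D(X\times Y)$ to obtain the desired functional relation directly---this is what the paper's one-line assertion that ``each morphism determines a unique functional relation'' amounts to.
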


\begin{proof} There is a canonical a natural isomorphism $T \to \sub(\nabla -)$ for each tripos $T$ \citep{a2CAotTtTC}.
The rest of this proof determines a isomorphism from $\nabla:\cat B \to \cat B[\sub(D-)]$ to $D$.

Let $F:\cat B[\sub(D-)] \to\cat E$ map $(X,e \in \sub(D(X\times X)))$ to the quotient object of the partial equivalence relation $e$ on $DX$; each functional relation $f:(X,e) \to (Y,e')$ induces a unique morphism $Ff:F(X,e) \to F(Y,e')$. Then $F$ is a fully faithful functor because each morphism $F(X,e) \to F(Y,e')$ determines a unique functional relation $(X,e) \to (Y,e')$. Resolvency implies that every object $X$ in $\cat E$ is a subquotient of some $DY$. Therefore the functor is also essentially surjective on objects and therefore an equivalence of categories. Finally, for each object $X$ of $\cat B$, $F\nabla X$ is the quotient of $DX$ by equality and therefore isomorphic to $DX$. For $f:X\to Y$ in $\cat B$, the functional relation $\nabla f$ is the graph of $Df$. This induces a morphism $F\nabla X \to F\nabla Y$ which commutes with $Df$ and the isomorphisms. Hence $\nabla$ and $D$ are isomorphic.
\end{proof}

\subsection{Conclusion} 
For each tripos $T:\cat B\dual \to \Set$ let $\Asm(T)$ be the full subcategory of $\cat B[T]$ on objects that have a monomorphism to $\nabla X$ for some object $X$ of $\cat B$. This too is the full image of $\Pi_0:\cat B[T]\downmono \nabla \to \cat B[T]$. It is a regular category because it is closed under finite products and arbitrary subobjects, which include all equalizers and images. The embedding $\Asm(T) \to \cat B[T]$ is resolvent, because $\Pi_0$ is and because the factorization of $\Pi_0$ through $\Asm(T)$ is surjective on objects.

It is easy to see that $\cat B[T] \cong \Asm(T)\exreg$, using the fact that $J:\Asm(T) \to \Asm(T)\exreg$ is the regular completion to get a functor $\Asm(T)\exreg \to \cat B[T]$ and using the fact that the inclusion $\Asm(T)\to \cat B[T]$ is resolvent to find its inverse $\cat B[T] \to \Asm(T)\exreg$.
Hence $\Asm(T)$ is weakly locally Cartesian closed and has a generic monomorphism.

In the case of the effective tripos $E:\Set\dual \to \Set$, $\Eff$ \emph{is} $\Set[E]$ and $\Asm(E)$ is (equivalent to) the ordinary category of assemblies $\Asm$. This solves the mystery at the start of this paper: if $\cat E$ is exact then finitely continuous functors $\Asm \to\cat E$ have left Kan extensions along the inclusion $\Asm\to \Eff$.

\hide{ Analyse: morfismes van tripossen. Probleem: $\Tot(T)$ is niet finitely complete. Misschien iets voor een ander artikel dus. }

\subsection*{Acknowledgments} 
I am grateful to the Warsaw Center of Mathematics and Computer Science for the opportunity to write this paper, for Mat\'ias Menni's many useful comments and for discussions with Marek Zawadowski.

\bibliographystyle{plainnat}

\bibliography{realizability}{}

\begin{thebibliography}{17}
\providecommand{\natexlab}[1]{#1}
\providecommand{\url}[1]{\texttt{#1}}
\expandafter\ifx\csname urlstyle\endcsname\relax
  \providecommand{\doi}[1]{doi: #1}\else
  \providecommand{\doi}{doi: \begingroup \urlstyle{rm}\Url}\fi

\bibitem[Carboni(1995)]{MR1358759}
Aurelio Carboni.
\newblock Some free constructions in realizability and proof theory.
\newblock \emph{J. Pure Appl. Algebra}, 103\penalty0 (2):\penalty0 117--148,
  1995.
\newblock ISSN 0022-4049.
\newblock \doi{10.1016/0022-4049(94)00103-P}.
\newblock URL \url{http://dx.doi.org/10.1016/0022-4049(94)00103-P}.

\bibitem[Carboni and Magno(1982)]{MR678508}
Aurelio Carboni and R.~Celia Magno.
\newblock The free exact category on a left exact one.
\newblock \emph{J. Austral. Math. Soc. Ser. A}, 33\penalty0 (3):\penalty0
  295--301, 1982.
\newblock ISSN 0263-6115.

\bibitem[Carboni and Rosolini(2000)]{MR1787592}
Aurelio Carboni and Giuseppe Rosolini.
\newblock Locally {C}artesian closed exact completions.
\newblock \emph{J. Pure Appl. Algebra}, 154\penalty0 (1-3):\penalty0 103--116,
  2000.
\newblock ISSN 0022-4049.
\newblock \doi{10.1016/S0022-4049(99)00192-9}.
\newblock URL \url{http://dx.doi.org/10.1016/S0022-4049(99)00192-9}.
\newblock Category theory and its applications (Montreal, QC, 1997).

\bibitem[Carboni and Vitale(1998)]{MR1600009}
Aurelio Carboni and E.~M. Vitale.
\newblock Regular and exact completions.
\newblock \emph{J. Pure Appl. Algebra}, 125\penalty0 (1-3):\penalty0 79--116,
  1998.
\newblock ISSN 0022-4049.
\newblock \doi{10.1016/S0022-4049(96)00115-6}.
\newblock URL \url{http://dx.doi.org/10.1016/S0022-4049(96)00115-6}.

\bibitem[Carboni et~al.(1988)Carboni, Freyd, and Scedrov]{MR948482}
Aurelio Carboni, Peter~J. Freyd, and Andre Scedrov.
\newblock A categorical approach to realizability and polymorphic types.
\newblock In \emph{Mathematical foundations of programming language semantics
  ({N}ew {O}rleans, {LA}, 1987)}, volume 298 of \emph{Lecture Notes in Comput.
  Sci.}, pages 23--42. Springer, Berlin, 1988.

\bibitem[Frey(2011)]{a2CAotTtTC}
Jonas Frey.
\newblock A 2-categorical analysis of the tripos-to-topos construction.
\newblock \emph{preprint}, 2011.

\bibitem[Hyland(1982)]{MR717245}
J.~M.~E. Hyland.
\newblock The effective topos.
\newblock In \emph{The {L}.{E}.{J}. {B}rouwer {C}entenary {S}ymposium
  ({N}oordwijkerhout, 1981)}, volume 110 of \emph{Stud. Logic Foundations
  Math.}, pages 165--216. North-Holland, Amsterdam, 1982.

\bibitem[Hyland et~al.(1980)Hyland, Johnstone, and Pitts]{MR578267}
J.~M.~E. Hyland, Peter~T. Johnstone, and Andrew~M. Pitts.
\newblock Tripos theory.
\newblock \emph{Math. Proc. Cambridge Philos. Soc.}, 88\penalty0 (2):\penalty0
  205--231, 1980.
\newblock ISSN 0305-0041.
\newblock \doi{10.1017/S0305004100057534}.
\newblock URL \url{http://dx.doi.org/10.1017/S0305004100057534}.

\bibitem[MacLane(1971)]{MR0354798}
Saunders MacLane.
\newblock \emph{Categories for the working mathematician}.
\newblock Springer-Verlag, New York, 1971.
\newblock Graduate Texts in Mathematics, Vol. 5.

\bibitem[Menni(2000)]{Menni00exactcompletions}
Mat{\'{\i}}as Menni.
\newblock \emph{Exact Completions and Toposes}.
\newblock PhD thesis, University of Edinburgh, 2000.

\bibitem[Menni(2001)]{MR1870615}
Mat{\'{\i}}as Menni.
\newblock Closure operators in exact completions.
\newblock \emph{Theory Appl. Categ.}, 8:\penalty0 522--540, 2001.
\newblock ISSN 1201-561X.

\bibitem[Menni(2002)]{MR1900904}
Mat{\'{\i}}as Menni.
\newblock More exact completions that are toposes.
\newblock \emph{Ann. Pure Appl. Logic}, 116\penalty0 (1-3):\penalty0 187--203,
  2002.
\newblock ISSN 0168-0072.
\newblock \doi{10.1016/S0168-0072(01)00111-7}.
\newblock URL \url{http://dx.doi.org/10.1016/S0168-0072(01)00111-7}.

\bibitem[Menni(2003)]{MR1948025}
Mat{\'{\i}}as Menni.
\newblock A characterization of the left exact categories whose exact
  completions are toposes.
\newblock \emph{J. Pure Appl. Algebra}, 177\penalty0 (3):\penalty0 287--301,
  2003.
\newblock ISSN 0022-4049.
\newblock \doi{10.1016/S0022-4049(02)00261-X}.
\newblock URL \url{http://dx.doi.org/10.1016/S0022-4049(02)00261-X}.

\bibitem[Menni(2007)]{MR2320014}
Mat{\'{\i}}as Menni.
\newblock Cocomplete toposes whose exact completions are toposes.
\newblock \emph{J. Pure Appl. Algebra}, 210\penalty0 (2):\penalty0 511--520,
  2007.
\newblock ISSN 0022-4049.
\newblock \doi{10.1016/j.jpaa.2006.10.009}.
\newblock URL \url{http://dx.doi.org/10.1016/j.jpaa.2006.10.009}.

\bibitem[Pitts(1981)]{Pittsthesis}
Andrew~M. Pitts.
\newblock \emph{The Theory of Triposes}.
\newblock PhD thesis, University of Cambridge, 1981.

\bibitem[Taylor(2000)]{MR1799865}
Paul Taylor.
\newblock Geometric and higher order logic in terms of abstract {S}tone
  duality.
\newblock \emph{Theory Appl. Categ.}, 7:\penalty0 No.\ 15, 284--338, 2000.
\newblock ISSN 1201-561X.

\bibitem[van Oosten(2008)]{MR2479466}
Jaap van Oosten.
\newblock \emph{Realizability: an introduction to its categorical side}, volume
  152 of \emph{Studies in Logic and the Foundations of Mathematics}.
\newblock Elsevier B. V., Amsterdam, 2008.
\newblock ISBN 978-0-444-51584-1.

\end{thebibliography}

\end{document}